\newtheorem{thm}{Theorem}[section]
\newtheorem{lem}{Lemma}[section]
\newtheorem{prp}{Proposition}[section]
\theoremstyle{remark}
\newtheorem{rem}{Remark}[section]
\theoremstyle{definition}
\newtheorem{dfn}{Definition}[section]
\begin{document}

\title
[Constructing stochastic flows of kernels]
{Constructing stochastic flows of kernels}

\author{Georgii Riabov}
\address{Institute of Mathematics of NAS of Ukraine}
\email{ryabov.george@gmail.com}



\subjclass[2000]{60J25, 60K37}
\keywords{Stochastic flow, kernel, convolution semigroup}

\begin{abstract}
In the paper we suggest a new construction of stochastic flows of kernels in a locally compact separable metric space $M$. Starting from a consistent sequence of Feller transtition function $(\mathsf{P}^{(n)}: n\geq 1)$ on $M$ we prove existence of a stochastic flow of kernels $K=(K_{s,t}: -\infty<s\leq t<\infty)$ in $M,$ such that distributions of $n$-point motions  of $K$ are determined by $\mathsf{P}^{(n)}.$ Presented construction allows to find a single idempotent measurable presentation $\mathfrak{p}$ of distributions of all kernels $K_{s,t}$ from the flow, and to construct a flow that is invariant under $\mathfrak{p}$ and is jointly measurable in all arguments.
\end{abstract}

\maketitle

\section{Introduction}

Stochastic flows of kernels appear naturally as solutions to stochastic differential equations (SDE's) in the absence of strong uniqueness.  Following fundamental works of Y. Le Jan and O. Raimond \cite{LeJan_Raimond_1,LeJan_Raimond_2}, by a stochastic flow of kernels we understand a family $(K_{s,t}:-\infty<s\leq t<\infty)$ of random probability transition kernels on a locally compact separable metric space $M$  that satisfy the  evolutionary property $K_{r,s}K_{s,t}=K_{r,t},$  $K_{s,s}(x)=\delta_x,$ $r\leq s\leq t$ (equalities must be understood in a proper sense that is explained below), have independent and homogeneous increments (if $t_1\leq t_2\leq \ldots \leq t_n,$ then $K_{t_1,t_2},\ldots,K_{t_{n-1},t_n}$ are independent; the distribution of $K_{s,t}$ depends only on $t-s$) and satisfy a variant of the Feller condition. Precise definition of a stochastic flow of kernels is given in Section \ref{sec:def}. 

One of the simplest examples of an SDE for which strong uniqueness fails is the Tanaka equation on $\mathbb{R}$
\begin{equation}
\label{eq:Tanaka_SDE}
dX_t=\mbox{sign}(X_t)d\mathrm{B}_t,
\end{equation}
where $(\mathrm{B}_t: t\in \mathbb{R})$ is the standard Brownian motion on $\mathbb{R}$ \cite[Ch. IV, \S 1]{Watanabe_Ikeda}. Obviously, the solution $X$ of \eqref{eq:Tanaka_SDE} follows the trajectory of $\mathrm{B}$ when it is strictly  positive, and follows the trajectory of $-\mathrm{B}$ when it is strictly negative. The reason for non-existence of a strong solution is that once the solution $X$ reaches zero, it can randomly choose which excursion to follow: the excursion of $\mathrm{B}$ or the excursion of $-\mathrm{B}$. A natural extension of the Tanaka equation to kernels was suggested in \cite{LeJan_Raimond_ALEA} in the form
\begin{equation}
\label{eq:Tanaka_kernels}
K_{s,t}f(x)=f(x)+\int^t_s K_{s,u}\left(f'\mbox{sign}\right)(x)d\mathrm{B}(u)+\frac{1}{2}\int^t_s K_{s,u}(f'')(x)du, \ t\geq s,
\end{equation}
where $f$ is an arbitrary twice continuously differentiable function on $\mathbb{R}$ with compact support.
If kernels $K_{s,t}$ are given by random mappings of $\varphi_{s,t}:\mathbb{R}\to \mathbb{R},$ i.e. $K_{s,t}(x)=\delta_{\varphi_{s,t}(x)},$ then the equation \eqref{eq:Tanaka_kernels} is a consequence of the It\^o formula. However, there are kernel solutions to \eqref{eq:Tanaka_kernels} that are not given by random mappings.  In \cite{LeJan_Raimond_ALEA} it was proved that all solutions of \eqref{eq:Tanaka_kernels} are in one-to-one correspondence with probability measures $m$ on $[0,1]$ with mean $\frac{1}{2},$ where $m$ is the law of $K_{0,t}(0,[0,\infty)).$ An amount of similar results for large classes of SDE's on manifolds and metric graphs were obtained in \cite{Hajri_WBM, Hajri_Caglar_Arnaudon, Hajri_Raimond_SPA,  LeJan_Raimond_IBVF, LeJan_Raimond_circle, LeJan_Raimond}. Stochastic flows of kernels with Brownian $n$-point motions were studied in \cite{Howitt_Warren, Schertzer_Sun_Swart, Warren}.

In \cite{LeJan_Raimond_1, LeJan_Raimond_2} it was shown that to any sequence $(\mathsf{P}^{(n)}:n\geq 1)$ of consistent Feller transition functions (where $(\mathsf{P}^{(n)}_t:t\geq 0)$ is a Feller transition function on $M^n$) there corresponds a stochastic flow of kernels $(K_{s,t}:-\infty<s\leq t<\infty)$ such that for all $n\geq 1,$ $t\geq 0,$ $x\in M^n$
\begin{equation}
\label{eq:FTF_to_SFK}
\mathsf{P}^{(n)}_t f(x)=\mathsf{E}\left[\int_{M^n} f(y)\left(\otimes^n_{i=1} K_{0,t}(x_i)\right)(dy)\right],
\end{equation}
where $f$ is an arbitrary continuous function on $M^n$ that vanishes at infinity. Consistency of transition functions means that transition kernels $\mathsf{P}^{(n)}_t(x)$ behave properly under permutations of components of $x\in M^n$ and define transition kernels $\mathsf{P}^{(k)}_t(y)$ for all $k<n$ and $y\in M^k.$ This result extends results of \cite{Darling, LeJan_Raimond_1,LeJan_Raimond_2} on existence of stochastic flows of mappings. In \cite{Darling} it was proved that to any sequence $(\mathsf{P}^{(n)}:n\geq 1)$ of consistent transition functions with additional  property that $\mathsf{P}^{(2)}_t((x,x))$ is concentrated on a diagonal of $M^2$ (coalescencing property) there corresponds a stochastic flow of random mappings $(\varphi_{s,t}:-\infty <s\leq t<\infty)$ of $M$ such that  for all $n\geq 1,$ $t\geq 0,$ $x\in M^n$
$$
\mathsf{P}^{(n)}_t f(x)=\mathsf{E} f\left(\varphi_{0,t}(x_1),\ldots,\varphi_{0,t}(x_n)\right),
$$
where $f$ is an arbitrary continuous function on $M^n$ that vanishes at infinity. In the construction of \cite{Darling} the evolutionary property $\varphi_{s,t}\circ \varphi_{r,s}=\varphi_{r,t},$ $r\leq s\leq t,$ holds without exceptions in $r,s,t,\omega$,  for any $t_1\leq t_2\leq \ldots \leq t_n$ mappings  $\varphi_{t_1,t_2},\ldots,\varphi_{t_{n-1},t_n}$ are independent, and the distribution of $\varphi_{s,t}$ depends only on $t-s.$ Howeve, in this construction the measurability of $\varphi_{s,t}(x)$ in any of the variables $s,t$ or $x$ is absent and only measurability in $x$ can be achieved under rather strong restrictions on transition functions $(\mathsf{P}^{(n)}: n\geq 1).$ This limits the applicability of results of \cite{Darling} in the context of equations like \eqref{eq:Tanaka_kernels}. To overcome the issue, in \cite{LeJan_Raimond_1, LeJan_Raimond_2}  the  Feller property of $P^{(n)}$ is assumed and the definition of a stochastic flow is modified. Namely, a stochastic flow of mappings is a family $(\varphi_{s,t}:-\infty <s\leq t<\infty)$ of random elements in the space of measurable mappings of  $M$ (equipped with the cylindrical $\sigma$-field) that satisfies  a variant of the Feller property and for which  the evolutionary property is understood as follows:

for all $r\leq s\leq t$ and $x\in M$ with probability $1$
\begin{equation}
\label{eq:EP_MP_SFM}
\varphi_{r,t}(x)=\mathcal{J}_{t-s}\left(\varphi_{s,t}\right)\circ \varphi_{r,s}(x),
\end{equation}
where $\mathcal{J}_{t-s}$ is a measurable presentation of the distribution of $\varphi_{s,t}$ in the space of measurable mappings of $M$. The usage of a measurable presentation $\mathcal{J}_{t-s}$ together with a variant of the Feller property for  $\varphi$ allows to settle a one-to-one correspondence between stochastic flows of mappings and  coalescing sequences of consistent Feller transition functions. Similarly, the evolutionary property for stochastic flows of kernels in \cite{LeJan_Raimond_1, LeJan_Raimond_2} is understood as follows:

for all $r\leq s\leq t$ and $x\in M$ with probability $1$
\begin{equation}
\label{eq:EP_MP_SFK}
K_{r,t}(x)=K_{r,s}\mathfrak{p}_{t-s}\left(K_{s,t}\right)(x),
\end{equation}
where $\mathfrak{p}_{t-s}$ is a measurable presentation of the distribution of $K_{s,t}$ in the space of kernels on $M$ (see Section  \ref{sec:def} for more details). 

Presences of $\mathcal{J}_{t-s}$ in \eqref{eq:EP_MP_SFM} and  of   $\mathfrak{p}_{t-s}$ in \eqref{eq:EP_MP_SFK} do not look natural. However, they are necessary due to two reasons at least. Firstly, the convolution of kernels is in general a non-measurable operation and it is not clear how to define convolution of two independent random kernels in a measurable way. Secondly, the presence of $\mathfrak{p}_{t-s}$ in \eqref{eq:EP_MP_SFK} allows to show that functions $\mathsf{P}^{(n)}_t(x)$ defined in \eqref{eq:FTF_to_SFK} are actually transition functions. In \cite{LeJan_Raimond_1, LeJan_Raimond_2} a stochastic flow of mappings $(\varphi_{s,t}:-\infty<s\leq t<\infty)$ was constructed in such a way that equalities $\mathcal{J}_{t-s}(\varphi_{s,t}(\omega))=\varphi_{s,t}(\omega)$ were satisfied without exceptions in $s,t,\omega$. The same result for flows of kernels was absent. The reason is that in \cite{LeJan_Raimond_1, LeJan_Raimond_2} flow of kernels is constructed from a certain stochastic flow of measure-valued mappings, and the procedure that produces the flow of kernels does not commute with measurable presentations of distributions of measure-valued mappings. In this paper we improve the approach suggested in \cite{LeJan_Raimond_1, LeJan_Raimond_2}. Starting from a consistent sequence of Feller transition functions  we prove the existence of a single idempotent measurable presentation $\mathfrak{p}$ of corresponding distributions of  kernels. Further, we construct a stochastic flow of kernels $(K_{s,t}:-\infty<s\leq t<\infty)$ in such a way that equalities $K_{s,t}(\omega)=\mathfrak{p}\left(K_{s,t}(\omega)\right)$ are satisfied without exceptions in $s,t,\omega$. Moreover, we achieve measurability of the  mapping $(s,t,\omega)\mapsto K_{s,t}(\omega).$  Together with equalities  $K_{s,t}(\omega)=\mathfrak{p}\left(K_{s,t}(\omega)\right)$ this implies measurability of the mapping $(s,t,\omega,x)\mapsto K_{s,t}(\omega, x)=\mathfrak{p}(K_{s,t}(\omega))(x).$

The paper is organized as follows. In Section \ref{sec:def} we give definitions of consistent sequences of  Feller transition functions, Feller convolution semigroups in the space of kernels and stochastic flows of kernels on a locally compact separable metric space $M$. Also, we show that a Feller convolution semigroup on $M$  defines a consistent  sequence of Feller transition functions on $M$ that determines finite-point motions with respect to the semigroup,  and a stochastic flow of kernels in $M$ defines a Feller convolution semigroup in the space of kernels on $M$ that defines distributions of kernels in a flow. In Section \ref{sec:main_thm_1} we prove that any consistent sequence of Feller transition functions on $M$ defines a unique Feller convolution semigroup in the space of kernels on $M$ with finite-point motions determined by the given sequence of transition functions. This result was obtained in \cite{LeJan_Raimond_1, LeJan_Raimond_2}. Our approach enables to construct a single idempotent measurable presentation $\mathfrak{p}$ of all distributions from a Feller convolution semigroup (Theorem \ref{thm:FCS_kernels}). In Section \ref{sec:main_thm_2} we prove that from any Feller convolution 
semigroup $(\nu_t:t\geq 0)$ in the space of kernels on $M$ one can construct a stochastic flow of kernels $(K_{s,t}:-\infty<s\leq t<\infty)$ in $M,$ for which the distribution of each  kernel $K_{s,t}$ coincides with $\nu_{t-s},$ the mapping $(s,t,\omega)\mapsto K_{s,t}(\omega)$ is measurable and equalities $\mathfrak{p}(K_{s,t}(\omega))=K_{s,t}(\omega)$  hold without exceptions in $(s,t,\omega)$ (Theorem \ref{thm:SFK}). Auxiliary Propositions \ref{prp:approximation} and \ref{prp:convergence} about approximations of stochastic flows of kernels seem to be new and interesting on their own. Another interesting consequence of our approach is that constructions of Feller convolution semigroups and of stochastic flows of kernels are done using approximating procedures that are very similar in their nature, but differ in the domain of approximation: the approximation is in space for Feller convolution semigroups and is in time for stochastic flow of kernels.

Finally, we note that our definitions of stochastic flows of kernels and Feller convolution semigroups are slightly different from the ones  given in \cite{LeJan_Raimond_1, LeJan_Raimond_2}. To show equivalence of definitions we give full proofs of several known statements from \cite{LeJan_Raimond_1, LeJan_Raimond_2}.

\section{Definitions, Preliminaries and Main Results}
\label{sec:def}

Let $(M,\rho)$ be a locally compact separable metric space equipped with the Borel $\sigma$-field $\mathcal{B}(M)$. Without loss of generality we assume that all $\rho$-bounded sets are relatively compact. In particular, $(M,\rho)$ is a complete separable  metric space. By $C(M)$ we denote the space of bounded continuous functions on $M$, and by $C_0(M)$ we denote the space of all continuous functions $f\in C(M)$ that vanish at infinity in the sense that for any $\varepsilon>0$ there exists a compact $C\subset M,$ such that $\sup_{x\in M\setminus C}|f(x)|\leq \varepsilon.$ With respect to the norm $\|f\|=\sup_M |f|$ the space $C_0(M)$ is a separable Banach space. 
$\mathcal{P}(M)$ denotes the space of all Borel probability measures on $M.$

Let $\hat{M}$ be the one-point compactification of $M.$ The following construction will be useful in our considerations. Write $M$ as a union $M=\bigcup^\infty_{j=1}L_j$ of compact sets $L_j,$ such that $L_j$ is contained in the interior of $L_{j+1}.$ For each $j$ fix a continuous function $\zeta_j:\hat{M}\to [0,1],$ such that $\zeta_j|_{L_j}=1$ and the support of $\zeta_j$ is contained in the interior of $L_{j+1}.$ Sequences $(L_j:j\geq 1)$ and $(\zeta_j: j\geq 1)$ will be called  exhaustive. 
   

The space $\mathcal{P}(\hat{M})$ equipped with  the topology of weak convergence is a compact metrizable space. Let $\hat{d}$ be the corresponding metric on $\mathcal{P}(\hat{M}).$ The set $\mathcal{P}(M)$  is a $G_\delta$ subset in  $\mathcal{P}(\hat{M}),$ hence is a Polish space \cite[Ch. II, Th. 6.5]{Parthasarathy}. Denote by $d$ a  metric on $\mathcal{P}(M)$ that is compatible with the topology of weak convergence and turns $\mathcal{P}(M)$ into a complete separable  metric space. 

\subsection{Consistent sequences of Feller transition functions}
\label{subsec:CSFTF}

For $1\leq k\leq n$ denote by $S_{k,n}$ the set of all injections $\sigma:\{1,\ldots,k\}\to \{1,\ldots,n\}.$ Any $\sigma\in S_{k,n}$ defines a mapping $\pi_\sigma:M^n\to M^k,$ $\pi_\sigma x=(x_{\sigma(1)},\ldots,x_{\sigma(k)}).$

Assume that for each $n\in \mathbb{N}$ a Feller transition function $\mathsf{P}^{(n)}$ on $M^n$ is defined.

\begin{dfn}\label{def:CSCFTF} \cite[Def. 1.1]{LeJan_Raimond_1} A sequence $(\mathsf{P}^{(n)}: n\in \mathbb{N})$ is called a  consistent sequence of Feller transition functions on $M,$  if

for all $1\leq k\leq n,$ $\sigma\in S_{k,n},$ $x\in M^n$ and $t\geq 0$
\begin{equation}
\label{eq:TF_consistency}
\mathsf{P}^{(n)}_t(x)\circ \pi^{-1}_{\sigma}=\mathsf{P}^{(k)}_t(\pi_\sigma x).
\end{equation}

\end{dfn}

The following Lemma contains one useful property of Feller transition functions.

\begin{lem}
\label{lem:Feller_property} Let $(\mathsf{P}_t: t\geq 0)$ be a Feller transition function on a locally compact separable metric space $M.$ Then for any compact $C\subset M,$ $T\geq 0$ and $\varepsilon>0,$ there exists compact $L\subset M,$ such that 
$$
\inf_{x\in C,t\in [0,T]}\mathsf{P}_t(x,L)\geq 1-\varepsilon.
$$
\end{lem}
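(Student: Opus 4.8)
The plan is to exploit the Feller property together with a compactness argument in the time variable. Recall that $(\mathsf{P}_t : t \geq 0)$ being a Feller transition function means that the associated semigroup acts strongly continuously on $C_0(M)$: for each $f \in C_0(M)$ the map $t \mapsto \mathsf{P}_t f$ is continuous from $[0,\infty)$ into $C_0(M)$ (with the sup-norm), and $\mathsf{P}_t f \in C_0(M)$. I would first fix $C$, $T$, and $\varepsilon$, and then, using an exhaustive sequence $(L_j : j \geq 1)$ together with the functions $(\zeta_j : j \geq 1)$ introduced in the preliminaries, produce for each $j$ the function $\zeta_j \in C_0(M)$ with $\mathbf{1}_{L_j} \leq \zeta_j \leq \mathbf{1}_{L_{j+1}}$. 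It suffices to find $j$ such that $\inf_{x \in C,\, t \in [0,T]} \mathsf{P}_t \zeta_j(x) \geq 1-\varepsilon$, since then $\mathsf{P}_t(x, L_{j+1}) \geq \mathsf{P}_t\zeta_j(x) \geq 1-\varepsilon$ and $L = L_{j+1}$ works.

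The next step is a pointwise statement: for each fixed $x \in M$ and $t \geq 0$ one has $\mathsf{P}_t\zeta_j(x) \to 1$ as $j \to \infty$, because $\zeta_j \uparrow 1$ pointwise on $M$ and $\mathsf{P}_t(x,\cdot)$ is a probability measure on $M$, so monotone convergence applies. To upgrade this to uniformity over $(x,t) \in C \times [0,T]$, I would use a two-stage compactness/equicontinuity argument. First, for fixed $t$, the family of functions $\{\mathsf{P}_t\zeta_j : j \geq 1\}$ is a monotone (nondecreasing in $j$) sequence of functions in $C_0(M)$ — wait, constants are not in $C_0(M)$, so instead I work with $1 - \mathsf{P}_t\zeta_j = \mathsf{P}_t(1-\zeta_j)$, and note $1-\zeta_j \in C(M)$ but not $C_0(M)$; the cleaner route is to observe that $g_j := \zeta_{j+1} - \zeta_j \in C_0(M)$ with $g_j \geq 0$ and $\sum_j g_j \leq 1$, so that $\mathsf{P}_t\zeta_j(x) = \mathsf{P}_t\zeta_1(x) + \sum_{i=1}^{j-1}\mathsf{P}_t g_i(x)$ is an increasing sequence of continuous functions converging pointwise to a continuous limit (namely $\mathsf{P}_t\zeta_1 + \sum_i \mathsf{P}_t g_i$, which equals $\lim_j \mathsf{P}_t\zeta_j \leq 1$). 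Combined with a Dini-type argument this should give uniform convergence on the compact set $C$ for each fixed $t$; the main work is to make this uniform in $t \in [0,T]$ as well.

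For the uniformity in $t$ I would use strong continuity: fix $j$ and consider the map $t \mapsto \mathsf{P}_t\zeta_j \in C_0(M)$; it is norm-continuous, hence uniformly continuous on the compact interval $[0,T]$, so $\{\mathsf{P}_t\zeta_j : t \in [0,T]\}$ is a compact, hence totally bounded, subset of $C_0(M)$. Given $\varepsilon$, cover $[0,T]$ by finitely many intervals on each of which $\|\mathsf{P}_t\zeta_j - \mathsf{P}_{t_\ell}\zeta_j\| < \varepsilon/3$; apply the pointwise-plus-Dini conclusion at each of the finitely many times $t_\ell$ to pick a single $j$ large enough that $\mathsf{P}_{t_\ell}\zeta_j(x) \geq 1 - \varepsilon/3$ for all $x \in C$ and all $\ell$; then for arbitrary $t \in [0,T]$ and $x \in C$, choosing the nearby $t_\ell$, we get $\mathsf{P}_t\zeta_j(x) \geq \mathsf{P}_{t_\ell}\zeta_j(x) - \varepsilon/3 \geq 1 - 2\varepsilon/3 \geq 1-\varepsilon$. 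Setting $L = L_{j+1}$ finishes the proof.

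I expect the main obstacle to be the interchange of the three limiting operations — the pointwise $j \to \infty$ limit, the uniformity over the non-compact-looking issue that constants lie outside $C_0(M)$, and the uniformity over $t$ — which is why I route everything through the genuinely $C_0(M)$-valued telescoping functions $g_j = \zeta_{j+1}-\zeta_j$ and apply strong continuity of the Feller semigroup to control the time variable; once the problem is reduced to finitely many times via total boundedness, a standard Dini argument on the compact set $C$ closes the gap.
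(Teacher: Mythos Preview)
Your approach is correct in outline and genuinely different from the paper's. The paper gives a two-line argument: the Feller property makes the map $(t,x)\mapsto \mathsf{P}_t(x)\in\mathcal{P}(M)$ continuous for the weak topology, so the image of the compact set $[0,T]\times C$ is compact in $\mathcal{P}(M)$, and Prokhorov's theorem immediately yields a compact $L$ with $\mathsf{P}_t(x,L)\geq 1-\varepsilon$ uniformly. Your route through the exhaustive functions $\zeta_j$, monotone convergence, Dini, and strong continuity is more elementary in that it avoids Prokhorov entirely, at the cost of being longer and requiring more bookkeeping.

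One small circularity in your final paragraph: you cover $[0,T]$ using strong continuity of $t\mapsto \mathsf{P}_t\zeta_j$ for a $j$ that is only selected afterwards (via Dini at the finitely many $t_\ell$), so the covering is not yet defined when you need it. The fix is to reverse the order: for each $t\in[0,T]$ first use Dini on $C$ to pick $j(t)$ with $\inf_{x\in C}\mathsf{P}_t\zeta_{j(t)}(x)\geq 1-\varepsilon/3$, then use strong continuity for this specific $\zeta_{j(t)}$ to get a neighbourhood of $t$, cover $[0,T]$ by finitely many such neighbourhoods centred at $t_1,\dots,t_N$, and set $j^*=\max_\ell j(t_\ell)$; monotonicity of $j\mapsto\zeta_j$ then gives $\mathsf{P}_s\zeta_{j^*}(x)\geq \mathsf{P}_s\zeta_{j(t_\ell)}(x)\geq 1-2\varepsilon/3$. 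A cleaner alternative that eliminates the two-stage argument altogether: since $(t,x)\mapsto \mathsf{P}_t\zeta_j(x)$ is jointly continuous (combine $\mathsf{P}_t\zeta_j\in C_0(M)$ with norm-continuity in $t$), you can apply Dini once on the compact set $[0,T]\times C$ to the monotone sequence $\mathsf{P}_t\zeta_j(x)\uparrow 1$.
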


\begin{proof} 
Feller property implies that the map $(t,x)\mapsto \mathsf{P}_t(x)\in \mathcal{P}(M)$ is continuous. In particular, the set $\{\mathsf{P}_t(x): t\in [0,T], x\in C\}$ is compact in $\mathcal{P}(M).$ The result follows from Prokhorov's theorem \cite[Th. 6.7, Ch. II]{Parthasarathy}.

\end{proof}

\subsection{Feller convolution semigroups in the space of kernels}

A kernel on $M$ is a measurable mapping $K:M\to \mathcal{P}(M).$ By $E$ we denote the set of all kernels on $M.$  For $K_1,K_2\in E$ denote by $K_1K_2$ a kernel 
$$
K_1K_2(x)=\int_M K_2(y)K_1(x,dy).
$$
For $\mu\in \mathcal{P}(M)$ we denote by $\mu K$ a probability measure $\mu K(B)=\int_M K(x,B)\mu(dx)$, and for a bounded measurable function $f:M\to \mathbb{R}$ we denote by $Kf$ a measurable function $Kf(x)=\int_M f(y) K(x,dy)$.

The set $E$ is equipped with the cylindrical  $\sigma$-field $\mathcal{E}$ --  
the smallest $\sigma$-field on $E$ with respect to which all mappings $ K\mapsto K(x),$ $x\in M,$ are $\mathcal{E}/\mathcal{B}(\mathcal{P}(M))$-measurable.

\begin{dfn}\label{def:regular_measure_kernels} \cite[Def. 1.2, Def. 2.1]{LeJan_Raimond_1} 
A probability measure $\nu$ on $(E,\mathcal{E})$ is called {\it regular}, if there exists a mapping $\mathfrak{p}:E\to E,$ such that the mapping $E\times M\ni (K,x)\mapsto \mathfrak{p}(K)(x)\in  \mathcal{P}(M)$ is measurable, and  for all $x\in M,$ $\mathfrak{p}(K)(x)=K(x)$ $\nu$-a.s.

\end{dfn}

The mapping $\mathfrak{p}$ is called a measurable presentation of a regular measure $\nu.$  Let $\nu_1,$ $\nu_2$ be regular probability measures on $(E,\mathcal{E}),$ and let $\mathfrak{p}$ be a measurable presentation of $\nu_2.$ Then the mapping $(K_1,K_2)\mapsto K_1 \mathfrak{p}(K_2)$ is $\mathcal{E}^{\otimes 2}/\mathcal{E}$-measurable and its distribution with respect to the product measure $\nu_1\otimes \nu_2$ is independent from the choice of $\mathfrak{p}.$ The latter distribution is denoted by $\nu_1*\nu_2$ and is called a  convolution of $\nu_1$ and $\nu_2$ \cite{LeJan_Raimond_1}.

\begin{dfn}\label{def:FCS_kernel} \cite[Def. 1.4, Def. 1.5]{LeJan_Raimond_1} 
A family $(\nu_t:t\geq 0)$ of regular probability measures on $(E,\mathcal{E})$ is called a  Feller convolution semigroup in the space of kernels on $M$, if 

\begin{enumerate}

\item for all $t,s\geq 0$, $\nu_t*\nu_s=\nu_{t+s};$

\item for any $f\in C_0(M)$ and any $\varepsilon>0,$
$$
\lim_{t\to 0+}\sup_{x\in M}\nu_t\left\{K: |Kf(x)-f(x)|\geq \varepsilon\right\}=0;
$$

\item for any $f\in C_0(M),$  $t\geq 0,$ $x\in M$ and $\varepsilon>0,$
$$
\lim_{y\to x}\nu_t\left\{K: |Kf(y)-Kf(x)|\geq \varepsilon\right\}=0, \ \lim_{y\to \infty}\nu_t\left\{K: |Kf(y)|\geq \varepsilon\right\}=0.
$$

\end{enumerate}

\end{dfn}

To each Feller convolution semigroup in the space of kernels  $(\nu_t:t\geq 0)$ one can associate a consistent sequence of Feller transition functions $(\mathsf{P}^{(n)}: n\geq 1)$ as follows: for all $n\geq 1,$ $x\in M^n,$ $B\in\mathcal{B}(M^n),$ $t\geq 0,$

\begin{equation}
\label{eq:FCS_to_CSCFTF_kernels}
\mathsf{P}^{(n)}_t(x,B)=\int_E \left(\otimes^n_{i=1}K(x_i)\right)(B)\nu_t(dK).
\end{equation}

\begin{prp}
\label{prop:FCS_to_CSFTF}  $(\mathsf{P}^{(n)}: n\geq 1 )$ is a consistent sequence of  Feller transition functions on $M.$
\end{prp}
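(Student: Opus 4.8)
The plan is to check the four defining ingredients of a consistent sequence of Feller transition functions in turn: that each $\mathsf{P}^{(n)}_t(x,\cdot)$ is a Borel probability measure on $M^n$ with $x\mapsto\mathsf{P}^{(n)}_t(x,B)$ measurable; that $(\mathsf{P}^{(n)}_t:t\ge0)$ is a semigroup; that it has the Feller property ($\mathsf{P}^{(n)}_tC_0(M^n)\subseteq C_0(M^n)$ and $\|\mathsf{P}^{(n)}_tF-F\|\to0$ as $t\to0+$); and that consistency \eqref{eq:TF_consistency} holds. The main device I would use throughout is to reduce to the linear span $\mathcal{A}_n\subseteq C_0(M^n)$ of product functions $f_1\otimes\dots\otimes f_n$ with $f_i\in C_0(M)$, which is dense in $C_0(M^n)$ by the Stone--Weierstrass theorem for locally compact spaces. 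On such functions $\bigl(\otimes^n_{i=1}K(x_i)\bigr)(f_1\otimes\dots\otimes f_n)=\prod^n_{i=1}Kf_i(x_i)$, and $K\mapsto Kf_i(x_i)$ is $\mathcal{E}$-measurable by the very definition of the cylindrical $\sigma$-field; by uniform approximation $K\mapsto\bigl(\otimes^n_iK(x_i)\bigr)(F)$ is then $\mathcal{E}$-measurable for every $F\in C_0(M^n)$, and, being bounded by $\|F\|$, it is $\nu_t$-integrable. Since \eqref{eq:FCS_to_CSCFTF_kernels} exhibits $\mathsf{P}^{(n)}_t(x,\cdot)$ as an average of the probability measures $\otimes^n_iK(x_i)$, it is a Borel probability measure on $M^n$ with $\mathsf{P}^{(n)}_tF(x)=\int_{M^n}F\,d\mathsf{P}^{(n)}_t(x,\cdot)$ for all bounded Borel $F$, and $|\mathsf{P}^{(n)}_tF(x)|\le\|F\|$.

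For the Feller property I would first fix a product $F=f_1\otimes\dots\otimes f_n$ and use the elementary telescoping estimate $\bigl|\prod_ia_i-\prod_ib_i\bigr|\le\sum_j|a_j-b_j|\prod_{i\ne j}\|f_i\|$, valid whenever $|a_i|,|b_i|\le\|f_i\|$. With $a_i=Kf_i(y_i)$ and $b_i=Kf_i(x_i)$, the first part of condition (3) of Definition \ref{def:FCS_kernel} gives $\prod_iKf_i(y_i)\to\prod_iKf_i(x_i)$ in $\nu_t$-probability as $y\to x$; since the integrands are uniformly bounded this upgrades to convergence of $\nu_t$-integrals, so $\mathsf{P}^{(n)}_tF$ is continuous. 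Using instead $\bigl|\prod_iKf_i(x_i)\bigr|\le\prod_{i\ne j}\|f_i\|\cdot|Kf_j(x_j)|$ together with $|Kf_j(y)|\le\|f_j\|$ and the second part of condition (3) yields $\int_E|Kf_j(y)|\,\nu_t(dK)\to0$ as $y\to\infty$; since every compact subset of $M^n$ lies in a product of compact sets, outside a large enough such product some coordinate is large and $|\mathsf{P}^{(n)}_tF|\le\varepsilon$, so $\mathsf{P}^{(n)}_tF\in C_0(M^n)$. Passing to general $F$ by density of $\mathcal{A}_n$ and the contraction bound gives $\mathsf{P}^{(n)}_tC_0(M^n)\subseteq C_0(M^n)$. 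Continuity of $x\mapsto\mathsf{P}^{(n)}_tF(x)$ for all $F\in C_0(M^n)$, combined with the fact that $\mathsf{P}^{(n)}_t(x,\cdot)$ is a probability measure, then gives Borel measurability of $x\mapsto\mathsf{P}^{(n)}_t(x,B)$ for every Borel $B$ by a monotone-class argument. Finally, the same telescoping estimate applied to $|\mathsf{P}^{(n)}_tF(x)-F(x)|$ together with condition (2), which bounds $\int_E|Kf_j(y)-f_j(y)|\,\nu_t(dK)$ by $\varepsilon+2\|f_j\|\sup_y\nu_t\{|Kf_j(y)-f_j(y)|\ge\varepsilon\}$ uniformly in $y$, yields $\|\mathsf{P}^{(n)}_tF-F\|\to0$ as $t\to0+$, first on $\mathcal{A}_n$ and then, by density and contractivity, on $C_0(M^n)$.

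The heart of the proof is the Chapman--Kolmogorov relation $\mathsf{P}^{(n)}_{t+s}=\mathsf{P}^{(n)}_t\mathsf{P}^{(n)}_s$, which I would derive from $\nu_{t+s}=\nu_t*\nu_s$. Fixing $F\in C_0(M^n)$ and a measurable presentation $\mathfrak{p}$ of $\nu_s$, the definition of the convolution writes $\mathsf{P}^{(n)}_{t+s}F(x)$ as the $\nu_t\otimes\nu_s$-integral of $\bigl(\otimes^n_i(K_1\mathfrak{p}(K_2))(x_i)\bigr)(F)$. I would then make two Fubini interchanges. First, for fixed $K_1,K_2$, the identity $\bigl(\otimes^n_i(K_1\mathfrak{p}(K_2))(x_i)\bigr)(F)=\int_{M^n}\bigl(\otimes^n_i\mathfrak{p}(K_2)(y_i)\bigr)(F)\,\bigl(\otimes^n_iK_1(x_i)\bigr)(dy)$ holds on $\mathcal{A}_n$ by Fubini for product measures and hence on all of $C_0(M^n)$. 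Second, for each fixed $y\in M^n$ the set $\{K_2:\mathfrak{p}(K_2)(y_i)=K_2(y_i),\ i=1,\dots,n\}$ is $\nu_s$-conull, so the inner $\nu_s$-integral equals $\mathsf{P}^{(n)}_sF(y)$; the joint measurability of $(K,x)\mapsto\mathfrak{p}(K)(x)$ built into Definition \ref{def:regular_measure_kernels}, together with boundedness, legitimizes interchanging this $\nu_s$-integral with the $\bigl(\otimes^n_iK_1(x_i)\bigr)$-integral. Integrating the result against $\nu_t$ and using $\mathsf{P}^{(n)}_sF\in C_0(M^n)$ gives $\mathsf{P}^{(n)}_{t+s}F=\mathsf{P}^{(n)}_t\bigl(\mathsf{P}^{(n)}_sF\bigr)$. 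The careful tracking of $\mathfrak{p}$ and of the conull sets across these interchanges is, I expect, the only genuinely delicate point; the measurability requirement in the definition of a regular measure is precisely what makes it go through. Taking $s=0$ above and applying the strong continuity already proved to $G:=\mathsf{P}^{(n)}_0F\in C_0(M^n)$ forces $\mathsf{P}^{(n)}_0F=F$, i.e. $\mathsf{P}^{(n)}_0(x,\cdot)=\delta_x$.

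Consistency \eqref{eq:TF_consistency} is then immediate: for $\sigma\in S_{k,n}$, injectivity of $\sigma$ makes the image of the product measure $\otimes^n_{i=1}K(x_i)$ under $\pi_\sigma$ equal to the product measure $\otimes^k_{j=1}K(x_{\sigma(j)})$, and integrating against $\nu_t$ yields $\mathsf{P}^{(n)}_t(x)\circ\pi_\sigma^{-1}=\mathsf{P}^{(k)}_t(\pi_\sigma x)$.
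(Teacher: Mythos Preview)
Your proposal is correct and follows essentially the same route as the paper: reduction to tensor products $f_1\otimes\cdots\otimes f_n$ via Stone--Weierstrass, telescoping estimates combined with conditions (2) and (3) of Definition~\ref{def:FCS_kernel} for the Feller property and strong continuity, the convolution identity $\nu_{t+s}=\nu_t*\nu_s$ unwound through $\mathfrak{p}$ and Fubini for Chapman--Kolmogorov, and the direct product-measure computation for consistency. The only notable deviation is that the paper obtains measurability of $x\mapsto\mathsf{P}^{(n)}_t(x,B)$ immediately from the representation $\mathsf{P}^{(n)}_t(x,B)=\int_E\bigl(\otimes^n_{i=1}\mathfrak{p}_t(K)(x_i)\bigr)(B)\,\nu_t(dK)$ and the joint measurability of $\mathfrak{p}_t$, whereas you recover it a posteriori from continuity of $x\mapsto\mathsf{P}^{(n)}_tF(x)$ plus a monotone-class argument; both are valid, and your Chapman--Kolmogorov discussion is in fact more detailed than the paper's, which simply asserts that the semigroup property follows from that of $\nu$.
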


\begin{proof} Let $\mathfrak{p}_t$ be a measurable presentation of $\nu_t.$ Measurability of $\mathsf{P}^{(n)}_t(x,B)$ in $x$ and the Chapman-Kolmogorov equation for $\mathsf{P}^{(n)}$  follow from the representation
$$
\mathsf{P}^{(n)}_t(x,B)=\int_E \left(\otimes^n_{i=1}\mathfrak{p}_t(K)(x_i)\right)(B)\nu_t(dK),
$$
and the convolution semigroup property of $\nu.$


We verify consistency. Let $\sigma\in S_{k,n}.$ Then 
$$
\begin{aligned}
\mathsf{P}^{(n)}_t\left(x,\pi_\sigma^{-1}(B) \right)&=\int_{E} \left(\otimes^n_{j=1}K(x_j) \right)\left(\pi_\sigma^{-1}(B)\right)\nu_t(dK)\\
&=\int_{E} \left(\otimes^k_{j=1}K(x_{\sigma(j)}) \right)(B)\nu_t(dK)=\mathsf{P}^{(k)}_t\left(\pi_\sigma x,B\right).
\end{aligned}
$$

It remains to verify the Feller property of $\mathsf{P}^{(n)}.$ By the Stone-Weierstrass theorem, it is enough to consider functions $f\in C_0(M^n)$ of the form $f(x)=\prod^n_{j=1} g_j(x_j),$ $g_j\in C_0(M).$ Then
$$
\begin{aligned}
|\mathsf{P}^{(n)}_tf(x)& -\mathsf{P}^{(n)}_tf(y)|=\left|\int_E \prod^n_{j=1}Kg_j (x_j) \nu_t(dK)-\int_E \prod^n_{j=1}Kg_j(y_j) \nu_t(dK)\right|\\
&\leq \sum^n_{k=1}\left|\int_{E}\left( \prod^{k}_{j=1}Kg_j(x_j)\prod^n_{j=k+1}Kg_j (y_j)- \prod^{k-1}_{j=1}Kg_j (x_j)\prod^n_{j=k}Kg_j(y_j)\right)\nu_t(dK)\right|\\
&\leq \sum^n_{k=1}\left|\int_{E} \prod^{k-1}_{j=1}Kg_j(x_j)\prod^n_{j=k+1}Kg_j (y_j) \times \left(Kg_k(x_k)-Kg_k(y_k)\right)\nu_t(dK)\right|\\
&\leq 2n\prod^n_{j=1}\|g_j\|\times \sup_{1\leq k\leq n}\nu_t\{K: \ |Kg_k(x_k)-Kg_k(y_k)|\geq \varepsilon\}+n\varepsilon \prod^n_{j=1}(\|g_j\|+1)\\
&\to n\varepsilon \prod^n_{j=1}(\|g_j\|+1), \ y\to x.
\end{aligned}
$$
Since $\varepsilon>0$ is arbitrary, we deduce that $\mathsf{P}^{(n)}_t f$ is continuous on $M^n.$

For any $\varepsilon>0$ there exists a compact $L\subset M,$ such that 
$$
\sup_{1\leq k\leq n}\sup_{y\not\in L}\nu_t\{K: |Kg_k(y)|\geq \varepsilon\}\leq \varepsilon.
$$
If $x\not\in L^n$ with, say, $x_k\not\in L,$  then

$$
\begin{aligned}
|\mathsf{P}^{(n)}_tf(x)|&=\left|\int_E \prod^n_{j=1}Kg_j (x_j) \nu_t(dK)\right|\\
&\leq \prod^n_{j=1}\|g_j\|\times \nu_t\{K: |Kg_k(x_k)|\geq \varepsilon\} +\varepsilon \prod^n_{j=1}(\|g_j\|+1)\leq 2\varepsilon \prod^n_{j=1}(\|g_j\|+1).
\end{aligned}
$$
It follows that $\lim_{x\to\infty}\mathsf{P}^{(n)}_tf(x)=0.$ So, $\mathsf{P}^{(n)}_t(C_0(M^n))\subset C_0(M^n).$

Further,
$$
\begin{aligned}
|\mathsf{P}^{(n)}_tf(x)&-f(x)|=\left|\int_E \left(\prod^n_{j=1}Kg_j(x_j)-\prod^n_{j=1} g_j(x_j)\right)\nu_t(dK)\right|\\
&\leq \sum^n_{k=1}\left|\int_{E}\left( \prod^{k}_{j=1}Kg_j(x_j)\prod^n_{j=k+1}g_j(x_j)- \prod^{k-1}_{j=1}Kg_j(x_j)\prod^n_{j=k}g_j(x_j)\right)\nu_t(dK)\right|\\
&\leq \sum^n_{k=1}\left|\int_{E} \prod^{k-1}_{j=1}Kg_j(x_j)\prod^n_{j=k+1}g_j(x_j)\times \left(Kg_k(x_k)-g_k(x_k)\right)\nu_t(dK)\right|\\
&\leq 2n\prod^n_{j=1}\|g_j\|\times  \sup_{1\leq k\leq n}\sup_{y\in M}\nu_t\{K: \ |Kg_k(y)-g_k(y)|\geq \varepsilon\}+n\varepsilon \prod^n_{j=1}(\|g_j\|+1).
\end{aligned}
$$
It follows that 
$$
\begin{aligned}
\sup_{x\in M^n}|\mathsf{P}^{(n)}_tf(x)-f(x)|&\leq 2n\prod^n_{j=1}\|g_j\|\times  \sup_{1\leq k\leq n}\sup_{y\in M}\nu_t\{K: \ |Kg_k(y)-g_k(y)|\geq \varepsilon\}\\
& \ \ \ +n\varepsilon \prod^n_{j=1}(\|g_j\|+1)\to n\varepsilon \prod^n_{j=1}(\|g_j\|+1), \ t\to 0.
\end{aligned}
$$
Since $\varepsilon>0$ is arbitrary, we deduce that $(\mathsf{P}^{(n)}_t: t\geq 0)$ is strongly continuous at $t=0.$
\end{proof}

The sequence $(\mathsf{P}^{(n)}: n\geq 1)$ completely determines the semigroup $(\nu_t:t\geq 0).$ To show this we introduce an algebra $\mathbb{A}_n(M)$   of continuous functions on $\mathcal{P}(M)^n,$ that consists of all functions $g:\mathcal{P}(M)^n\to \mathbb{R}$ of the form
\begin{equation}
\label{eq:functions_algebra}
g(\mu_1,\ldots,\mu_n)=\int_{M^N}f(y)\left(\mu_{i_1}\otimes \ldots\otimes \mu_{i_N}\right)(dy),
\end{equation}
where $f\in C_0(M^N),$ $(i_1,\ldots,i_N)\in \{1,\ldots,n\}^N.$

\begin{lem}
\label{lem:distribution_on_P(M)^n} A probability measure $\Pi$ on $\mathcal{P}(M)^n$ is completely determined by integrals of the form 
\begin{equation}
\label{eq:integrals_algebra}
\int_{\mathcal{P}(M)^n} g(\mu_1,\ldots,\mu_n)\Pi(d\mu),
\end{equation}
where $g\in \mathbb{A}_n(M).$
\end{lem}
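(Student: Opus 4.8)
The plan is to embed $\mathcal{P}(M)^n$ into the compact metrizable space $\mathcal{P}(\hat{M})^n$, check that the unital algebra generated by $\mathbb{A}_n(M)$ transfers to enough functions there, and conclude via the Stone--Weierstrass theorem; the genuine content is the fact that a point-separating algebra of continuous functions is measure-determining, which fails on non-compact spaces but becomes available after compactification.

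As recalled in Section~\ref{sec:def}, $\mathcal{P}(M)$ is a ($G_\delta$, hence Borel) subset of $\mathcal{P}(\hat{M})$, so $\mathcal{P}(M)^n$ is Borel in $\mathcal{P}(\hat{M})^n$, and the coordinatewise inclusion $\iota\colon\mathcal{P}(M)^n\hookrightarrow\mathcal{P}(\hat{M})^n$ is continuous (each $f\in C(\hat{M})$ restricts to an element of $C(M)$). Hence a Borel probability measure $\Pi$ on $\mathcal{P}(M)^n$ pushes forward to a Borel probability measure $\bar{\Pi}=\Pi\circ\iota^{-1}$ on $\mathcal{P}(\hat{M})^n$ concentrated on $\mathcal{P}(M)^n$, and $\bar{\Pi}$ determines $\Pi$, because every Borel subset of $\mathcal{P}(M)^n$ is also Borel in $\mathcal{P}(\hat{M})^n$ and carries the same mass under both measures. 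So it suffices to recover $\bar{\Pi}$ from the integrals \eqref{eq:integrals_algebra}.

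Each $g\in\mathbb{A}_n(M)$ of the form \eqref{eq:functions_algebra} extends to a continuous function on $\mathcal{P}(\hat{M})^n$: extend $f\in C_0(M^N)$ to $\hat{f}\in C(\hat{M}^N)$ by declaring $\hat{f}\equiv 0$ at every point having at least one coordinate equal to $\infty$ (continuity of $\hat{f}$ follows from the fact that a compact subset of $M^N$ is contained in a product of compact subsets of $M$), and set $\bar{g}(\mu)=\int_{\hat{M}^N}\hat{f}\,d(\mu_{i_1}\otimes\dots\otimes\mu_{i_N})$, which is weakly continuous on the compact space $\mathcal{P}(\hat{M})^n$. Since $\bar{\Pi}$ is carried by $\mathcal{P}(M)^n$ we have $\int\bar{g}\,d\bar{\Pi}=\int g\,d\Pi$, so the hypothesis supplies the values of $\bar{\Pi}$ on every such $\bar{g}$. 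The family $\{\bar{g}:g\in\mathbb{A}_n(M)\}$ is stable under products (the product of two functions of the form \eqref{eq:functions_algebra} is again of that form, with the integrand replaced by a tensor product), so its linear span together with the constants is a unital subalgebra $\mathcal{A}\subset C(\mathcal{P}(\hat{M})^n)$, and equality of two probability measures on $\mathbb{A}_n(M)$ propagates, by linearity and $\int 1\,d\bar{\Pi}=1$, to equality on all of $\mathcal{A}$.

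It remains to verify that $\mathcal{A}$ separates the points of $\mathcal{P}(\hat{M})^n$ --- the one step needing genuine work --- for which the functions with $N=1$ already suffice, namely $\mu\mapsto\int_M h\,d\mu_i$ with $h\in C_0(M)$, $1\le i\le n$. If $(\mu_1,\dots,\mu_n)\ne(\nu_1,\dots,\nu_n)$ in $\mathcal{P}(\hat{M})^n$, choose $j$ with $\mu_j\ne\nu_j$; were $\int_M h\,d\mu_j=\int_M h\,d\nu_j$ for every $h\in C_0(M)$, the Riesz representation theorem on the locally compact space $M$ would force $\mu_j|_M=\nu_j|_M$, whence $\mu_j(\{\infty\})=1-\mu_j(M)=1-\nu_j(M)=\nu_j(\{\infty\})$ and $\mu_j=\nu_j$, a contradiction. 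Thus $\mathcal{A}$ is a point-separating unital subalgebra of $C(\mathcal{P}(\hat{M})^n)$, hence uniformly dense by Stone--Weierstrass; since $\mathcal{P}(\hat{M})^n$ is compact metric, $\bar{\Pi}$ is determined by its integrals over $C(\mathcal{P}(\hat{M})^n)$, hence by those over $\mathcal{A}$, hence by the integrals \eqref{eq:integrals_algebra}. The main obstacle is exactly this interface between the compactification and the class $\mathbb{A}_n(M)$: one must check both that the functions \eqref{eq:functions_algebra} transfer continuously to $\mathcal{P}(\hat{M})^n$ and that they still separate its points, the latter resting on the separating power of $C_0(M)$ for probability measures on $\hat{M}$.
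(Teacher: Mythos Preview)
Your proof is correct and shares the paper's overall strategy: push $\Pi$ forward to the compact space $\mathcal{P}(\hat{M})^n$ and invoke Stone--Weierstrass there. The one technical difference lies in how density in $C(\mathcal{P}(\hat{M})^n)$ is obtained. The paper first applies Stone--Weierstrass to the larger algebra $\mathbb{A}_n(\hat{M})$ (built from $f\in C(\hat{M}^N)$, which automatically contains constants and separates points), and then shows that integrals of $g\in\mathbb{A}_n(\hat{M})$ against $\Pi$ are recovered as limits of integrals of $g_j\in\mathbb{A}_n(M)$, where $g_j$ is obtained by multiplying $f$ by the cutoff $\zeta_j^{\otimes N}$ from the exhaustive sequence and using dominated convergence. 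You instead extend each $f\in C_0(M^N)$ by zero to $\hat{f}\in C(\hat{M}^N)$ and verify directly that the resulting functions (together with constants) already separate the points of $\mathcal{P}(\hat{M})^n$, via the fact that $C_0(M)$ separates probability measures on $\hat{M}$. Your route is slightly more self-contained in that it avoids the auxiliary exhaustive sequence; the paper's route reuses machinery set up elsewhere and makes the approximation by compactly supported integrands explicit, which fits the paper's later repeated use of the $\zeta_j$.
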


\begin{proof} Let $M$ be compact. Then $\mathcal{P}(M)^n$ is a compact metric space and $\mathbb{A}_n(M)$ is dense in $C(\mathcal{P}(M)^n)$ by the Stone-Weierstrass theorem. Hence, integrals of the form \eqref{eq:integrals_algebra} with $g\in \mathbb{A}_n(M)$ define integrals of the form \eqref{eq:integrals_algebra} with $g\in C(\mathcal{P}(M)^n).$ In this case the result is proved.

In general case, consider the one-point compactification $\hat{M}$ of $M.$  $\Pi$ can be viewed as a probability measure on $\mathcal{P}(\hat{M})^n.$ It is completely determined by integrals of the form \eqref{eq:integrals_algebra} with $g\in \mathbb{A}_n(\hat{M}).$ Consider $g\in \mathbb{A}_n(\hat{M})$ of the form 
$$
g(\mu_1,\ldots,\mu_n)=\int_{\hat{M}^N}f(y)(\mu_{i_1}\otimes \ldots\otimes \mu_{i_N})(dy),
$$
where $f\in C(\hat{M}^N),$ $(i_1,\ldots,i_N)\in \{1,\ldots,n\}^N.$ Let $g_j\in \mathbb{A}_n(M)$ be defined as
$$
g_j(\mu_1,\ldots,\mu_n)=\int_{M^N}f(y)\zeta_j^{\otimes N}(y)(\mu_{i_1}\otimes \ldots\otimes \mu_{i_N})(dy),
$$
where $(\zeta_j:j\geq 1)$ is the exhaustive sequence introduced in the beginning of Section \ref{sec:def}. The result follows, since 
$$
\begin{aligned}
\int_{\mathcal{P}(M)^n}g(\mu_1,\ldots,\mu_n)\Pi(d\mu)&=\int_{\mathcal{P}(M)^n}\left(\int_{M^N}f(y)(\mu_{i_1}\otimes \ldots\otimes \mu_{i_N})(dy)\right)\Pi(d\mu)\\
&=\lim_{j\to\infty}\int_{\mathcal{P}(M)^n}g_j(\mu_1,\ldots,\mu_n)\Pi(d\mu).
\end{aligned}
$$

\end{proof}

\begin{lem}
\label{CSFTF_determines_FCS} The sequence $(\mathsf{P}^{(n)}: n\geq 1)$ completely determines the Feller convolution semigroup $(\nu_t:t\geq 0).$
\end{lem}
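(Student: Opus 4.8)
The plan is to fix $t\geq 0$ and show that $\nu_t$ is reconstructed from the family of its finite-dimensional distributions, and that each of these is reconstructed from the transition functions $(\mathsf{P}^{(n)}: n\geq 1)$ via \eqref{eq:FCS_to_CSCFTF_kernels}; since $t$ is arbitrary this recovers the whole semigroup.

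First I would observe that, by the very definition of the cylindrical $\sigma$-field, the sets $\{K\in E:\ (K(x_1),\ldots,K(x_n))\in A\}$ with $n\geq 1$, $x_1,\ldots,x_n\in M$ and $A\in\mathcal{B}(\mathcal{P}(M)^n)$ form a $\pi$-system generating $\mathcal{E}$ (here $\mathcal{B}(\mathcal{P}(M)^n)=\mathcal{B}(\mathcal{P}(M))^{\otimes n}$ because $\mathcal{P}(M)$ is separable). Hence, by the uniqueness part of the $\pi$-$\lambda$ theorem, $\nu_t$ is completely determined once we know, for every $n$ and every tuple $(x_1,\ldots,x_n)$, the push-forward $\Pi^t_{x_1,\ldots,x_n}$ of $\nu_t$ under $K\mapsto(K(x_1),\ldots,K(x_n))$, a probability measure on $\mathcal{P}(M)^n$. (Regularity of $\nu_t$ is not used here.)

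Next I would apply Lemma \ref{lem:distribution_on_P(M)^n}: each $\Pi^t_{x_1,\ldots,x_n}$ is determined by the integrals $\int_{\mathcal{P}(M)^n} g\,d\Pi^t_{x_1,\ldots,x_n}$ over $g\in\mathbb{A}_n(M)$. For $g$ of the form \eqref{eq:functions_algebra}, a change of variables together with Fubini's theorem gives
$$
\int_{\mathcal{P}(M)^n} g\,d\Pi^t_{x_1,\ldots,x_n}=\int_E\left(\otimes^N_{j=1}K(x_{i_j})\right)(f)\,\nu_t(dK).
$$
By \eqref{eq:FCS_to_CSCFTF_kernels} the measure $\int_E(\otimes^N_{j=1}K(z_j))(\cdot)\,\nu_t(dK)$ on $M^N$ is precisely $\mathsf{P}^{(N)}_t(z,\cdot)$, so passing from indicator functions to bounded measurable ones shows that the right-hand side equals $\mathsf{P}^{(N)}_t f(x_{i_1},\ldots,x_{i_N})$, a quantity determined by the sequence $(\mathsf{P}^{(n)}: n\geq 1)$. (Repetitions among $x_1,\ldots,x_n$ cause no difficulty, since $(x_{i_1},\ldots,x_{i_N})$ is simply a point of $M^N$.) Chaining the three reductions proves the lemma.

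The argument is mostly bookkeeping; the only points that need care are the first reduction — checking that the finite-dimensional cylinder sets genuinely generate $\mathcal{E}$ as a $\pi$-system and that $\mathcal{B}(\mathcal{P}(M)^n)$ coincides with the product $\sigma$-field — and the routine extension in \eqref{eq:FCS_to_CSCFTF_kernels} from sets to functions. I do not expect a genuine obstacle.
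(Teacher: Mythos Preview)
Your proposal is correct and follows essentially the same route as the paper: reduce $\nu_t$ to its finite-dimensional distributions on $\mathcal{P}(M)^n$, invoke Lemma~\ref{lem:distribution_on_P(M)^n} to reduce those to integrals over $\mathbb{A}_n(M)$, and identify each such integral with $\mathsf{P}^{(N)}_t f(x_{i_1},\ldots,x_{i_N})$. The paper's proof is simply a terser version of the same three steps.
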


\begin{proof}

The probability measure $\nu_t$ is completely determined by distributions of $\mathcal{P}(M)^n$-valued random elements $(K(x_1),\ldots,K(x_n)),$ where $x\in M^n,$ $n\geq 1.$ Hence, $\nu_t$ is completely determined by integrals of the form 
\begin{equation}
\label{eq:nu_t_determination}
\int_{E}\left(\int_{M^N}f(y)\left(K(x_{i_1})\otimes \ldots\otimes K(x_{i_N})\right)(dy)\right)\nu_t(dK),
\end{equation}
where $f\in C_0(M^N),$ $(i_1,\ldots,i_N)\in \{1,\ldots,n\}^N.$ It remains to note that \eqref{eq:nu_t_determination}  is  equal to $\mathsf{P}^{(N)}_tf(x_{i_1},\ldots,x_{i_N}).$

\end{proof}

In  \cite{LeJan_Raimond_1} it was proved that to any consistent sequence of Feller transition functions $(\mathsf{P}^{(n)}: n\geq 1)$ on $M$ there corresponds a unique Feller convolution semigroup $(\nu_t: t\geq 0)$ on $E,$ such that  \eqref{eq:FCS_to_CSCFTF_kernels} holds. Theorem \ref{thm:FCS_kernels} gives a strengthed version of this result. The main difference is that we find one idempotent measurable presentation $\mathfrak{p}$ of all measures $\nu_t.$

\begin{thm}
\label{thm:FCS_kernels}
Let $(\mathsf{P}^{(n)}: n\geq 1)$ be a consistent sequence of Feller transition functions on $M$.  There exists a unique Feller convolution semigroup  $(\nu_t:t\geq 0)$ that satisfies \eqref{eq:FCS_to_CSCFTF_kernels}. Moreover,  there exists a mapping  $\mathfrak{p}:E\to E$ which is a measurable presentation of every measure $\nu_t,$ $t\geq 0,$ and satisfies the relation $\mathfrak{p}\circ \mathfrak{p}=\mathfrak{p}.$ 
\end{thm}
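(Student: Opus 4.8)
The plan is to construct the finite-dimensional distributions of each $\nu_t$ by hand, glue them with Kolmogorov's extension theorem, and then extract from the metric of $M$ a single measurable map $\mathfrak{p}$ that serves as a presentation of all the $\nu_t$ at once; uniqueness will be free from Lemma~\ref{CSFTF_determines_FCS}.

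Fix $t\ge 0$. For an ordered tuple $x=(x_1,\dots,x_n)\in M^n$ I want the law $\Pi^x_t$ of the would-be vector $(K(x_1),\dots,K(x_n))$ in $\mathcal{P}(M)^n$. I would get it probabilistically: for $p\ge 1$ let the array $(Y^{i,q}:1\le i\le n,\ 1\le q\le p)$ have law $\mathsf{P}^{(np)}_t$ started from the point consisting of $p$ copies of each $x_i$; by consistency \eqref{eq:TF_consistency} these are compatible as $p\to\infty$ and each block $(Y^{i,q})_q$ is exchangeable, so a de Finetti / reverse-martingale argument gives that the block empirical measures $\frac{1}{p}\sum_{q\le p}\delta_{Y^{i,q}}$ converge a.s.\ in $\mathcal{P}(\hat M)$; call the limits $\xi_1,\dots,\xi_n$ and set $\Pi^x_t:=\mathrm{Law}(\xi_1,\dots,\xi_n)$. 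Since $\mathsf{E}[\xi_i(\,\cdot\,)]=\mathsf{P}^{(1)}_t(x_i,\cdot)\in\mathcal{P}(M)$, each $\xi_i$ is in $\mathcal{P}(M)$ a.s. A law-of-large-numbers computation (distinct multi-indices dominate, diagonal terms are $O(1/p)$) identifies, for any $g\in\mathbb{A}_n(M)$ given by $g(\mu)=\int_{M^N}f\,d(\mu_{i_1}\otimes\cdots\otimes\mu_{i_N})$, the value $\int g\,d\Pi^x_t=\mathsf{P}^{(N)}_tf(x_{i_1},\dots,x_{i_N})$, using \eqref{eq:TF_consistency} to pass to the $N$ selected coordinates; in particular this yields \eqref{eq:FCS_to_CSCFTF_kernels} on product sets, hence on all of $\mathcal{B}(M^n)$. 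By Lemma~\ref{lem:distribution_on_P(M)^n} this pins down $\Pi^x_t$ uniquely, and \eqref{eq:TF_consistency} makes the family $\{\Pi^x_t\}$ consistent under permutation and omission of coordinates.

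Regarding $\{\Pi^F_t:F\subset M\text{ finite}\}$ as a projective system of Borel measures on finite powers of the Polish space $\mathcal{P}(M)$, Kolmogorov's extension theorem produces $\tilde\nu_t$ on $\mathcal{P}(M)^M$ equipped with the product/cylindrical $\sigma$-field, with $F$-marginal $\Pi^F_t$; this is the ``approximation in space'', the domain of the kernel being exhausted by finite subsets. The measure $\tilde\nu_t$ lives on $\mathcal{P}(M)^M$, not on $E$, so the remaining step is regularity, and this is the main obstacle. Fix once and for all a countable dense $D\subset M$, a countable convergence-determining family $(f_l)\subset C_0(M)$, and the exhaustive sequences $(L_j),(\zeta_j)$. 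Define $\mathfrak{p}(K)(x):=K(x)$ for $x\in D$, and for $x\notin D$ let $\mathfrak{p}(K)(x)$ be the limit in the compact space $\mathcal{P}(\hat M)$ of a fixed spatial-averaging scheme of the values $K(z)$, $z\in D\cap B(x,1/j)$, $j\to\infty$ — for instance weighted averages $\sum_{z\in D}w_j(x,z)K(z)$ with $(x,z)\mapsto w_j(x,z)$ jointly measurable and $w_j(x,\cdot)$ supported on $D\cap B(x,1/j)$ — whenever this limit exists and lies in $\mathcal{P}(M)$, and $\mathfrak{p}(K)(x):=\delta_x$ otherwise. By construction $\mathfrak{p}(K)$ depends only on $K|_D$ and agrees with $K$ on $D$, so $\mathfrak{p}\circ\mathfrak{p}=\mathfrak{p}$ automatically, and $(K,x)\mapsto\mathfrak{p}(K)(x)$ is jointly measurable, so $\mathfrak{p}$ maps into $E$; set $\nu_t:=\mathfrak{p}_*\tilde\nu_t$ on $(E,\mathcal{E})$.

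The substantive claim is that $\mathfrak{p}$ is a presentation of every $\nu_t$, i.e.\ $\mathfrak{p}(K)(x)=K(x)$ holds $\tilde\nu_t$-a.s.\ for each $x\notin D$ (which also shows $\nu_t$ has finite-dimensional distributions $\Pi^F_t$, so \eqref{eq:FCS_to_CSCFTF_kernels} holds for $\nu_t$). From the Feller property of $\mathsf{P}^{(2)}$ one gets, as $z\to x$, $\mathsf{E}_{\tilde\nu_t}[(K(z)f_l-K(x)f_l)^2]=\mathsf{P}^{(2)}_t((z,z))(f_l\otimes f_l)-2\mathsf{P}^{(2)}_t((z,x))(f_l\otimes f_l)+\mathsf{P}^{(2)}_t((x,x))(f_l\otimes f_l)\to 0$, so the averages converge to $K(x)$ in $L^2(\tilde\nu_t)$ against each $f_l$; a Chebyshev/Borel–Cantelli argument — choosing, for each $x$, the averaging radii and the finite sets of $D$-points to shrink fast enough along the exhaustion — upgrades this to $\tilde\nu_t$-a.s.\ convergence in $\mathcal{P}(M)$ to $K(x)$. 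The delicate part is that the moduli of continuity of $\mathsf{P}^{(2)}_t$ depend on $t$, so the averaging scheme has to be fixed intrinsically from the metric of $M$, with the almost-sure statement then verified separately for each $\nu_t$ (a different null set each time being permitted); this is where the bulk of the work lies. It then remains to check that $(\nu_t)$ is a Feller convolution semigroup: regularity is the content of $\mathfrak{p}$; the identity $\nu_s*\nu_t=\nu_{s+t}$ follows by computing the $n$-point transition function of $\nu_s*\nu_t$ — with independent $K_1\sim\nu_s$, $K_2\sim\nu_t$ and $\mathfrak{p}$ as presentation, $\mathsf{E}\big[\prod_i K_1\mathfrak{p}(K_2)f_i(x_i)\big]=\int\!\!\int\prod_i f_i(w_i)\,\mathsf{P}^{(n)}_t(y,dw)\,\mathsf{P}^{(n)}_s(x,dy)=\mathsf{P}^{(n)}_{s+t}f(x)$ by Chapman–Kolmogorov — so by Lemma~\ref{CSFTF_determines_FCS} the measures coincide; and conditions (2)–(3) of Definition~\ref{def:FCS_kernel} follow from Chebyshev together with strong continuity at $0$, pointwise continuity and vanishing at infinity of $\mathsf{P}^{(1)}$ and $\mathsf{P}^{(2)}$, e.g.\ $\sup_x\nu_t\{|Kf(x)-f(x)|\ge\varepsilon\}\le\varepsilon^{-2}\sup_x\big(\mathsf{P}^{(2)}_t(f\otimes f)(x,x)-2f(x)\mathsf{P}^{(1)}_tf(x)+f(x)^2\big)\to 0$ as $t\to 0+$. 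Uniqueness of the whole semigroup is then immediate from Lemma~\ref{CSFTF_determines_FCS}.
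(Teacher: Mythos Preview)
Your overall architecture matches the paper's: build the finite-dimensional laws $\Pi^x_t$, extend by Kolmogorov over a countable dense set, and push forward through a map $\mathfrak{p}$ that depends only on the restriction to that set. Your de~Finetti/exchangeability route to $\Pi^x_t$ is a legitimate probabilistic alternative to the paper's functional-analytic construction (Lemma~\ref{lem:Pi_correctness}); in fact the paper's proof of non-negativity of the moment functional---the $L\to\infty$ empirical-average argument---is your law of large numbers in disguise. Idempotence via ``$\mathfrak{p}(K)$ depends only on $K|_D$ and agrees with $K$ on $D$'' is also exactly the paper's mechanism (Lemma~\ref{lem:measurable_presentation}, with $e\circ i=\mathrm{id}$).

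The gap is precisely at the point you flag but do not close. For $\nu_t:=\mathfrak{p}_*\tilde\nu_t$ to have the correct finite-dimensional distributions (hence satisfy \eqref{eq:FCS_to_CSCFTF_kernels}), you need, for every $t\ge 0$ and every $x\notin D$, that your \emph{fixed} averaging scheme converges $\tilde\nu_t$-a.s.\ to $K(x)$. Your $L^2$/Chebyshev estimate gives only convergence in probability, and you cannot pass to a subsequence to rescue Borel--Cantelli because the subsequence would depend on $t$, destroying the single-$\mathfrak{p}$ requirement. A scheme ``fixed intrinsically from the metric of $M$'' with radii $1/j$ will in general \emph{not} satisfy a summable Borel--Cantelli bound for all $t$: the modulus $\mathsf{E}_{\tilde\nu_t}[(Kf(z)-Kf(x))^2]$ can decay arbitrarily slowly in $\rho(z,x)$ as $t$ varies, so ``a different null set each time'' does not save you---for some $t$ the set where the limit fails to exist (and your fallback $\delta_x$ fires) may have positive measure. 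The paper's resolution is \emph{not} metric-intrinsic: it first proves the uniform estimate of Lemma~\ref{lem:properties_of_Pi_2}, that $\Pi^{(2)}_t((x,y),\Delta^c_\varepsilon)\to 0$ uniformly over $t\in[0,T]$ and $(x,y)$ in compacts as $\rho(x,y)\to 0$ (a consequence of the joint continuity in Lemma~\ref{lem:properties_of_Pi_1}(4)), and then \emph{chooses} radii $\varepsilon_j\downarrow 0$ depending on the given transition functions so that $(t,x,y)\in[0,j]\times L_j^2$ and $\rho(x,y)\le\varepsilon_j$ force $\Pi^{(2)}_t((x,y),\Delta^c_{2^{-j}})\le 2^{-j}$. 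With this $\mathsf{P}$-dependent approximation $m^x_j$ the Borel--Cantelli step (Lemmas~\ref{lem:aux1} and~\ref{lem:1.5.7}) goes through for every $t$ simultaneously. Nothing in the theorem forbids $\mathfrak{p}$ from depending on $(\mathsf{P}^{(n)})$; it only asks for one $\mathfrak{p}$ that serves all $\nu_t$, and that is what the $\mathsf{P}$-adapted choice of $\varepsilon_j$ delivers.
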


\subsection{Stochastic flows of kernels}
\label{subsec:SFK}

\begin{dfn}\label{def:SFK}\cite[Def. 2.3]{LeJan_Raimond_1}, \cite[Def. 7]{LeJan_Raimond_2} A stochastic flow of kernels in $M$ is a family $K=(K_{s,t}:-\infty<s\leq t<\infty)$  of random elements in $(E,\mathcal{E})$ that are defined on a common  probability space $(\Omega,\mathcal{A},\mathsf{P})$  and satisfy the following properties: 

\begin{enumerate}
\item the law of $K_{s,t}$ is regular and coincides with the law of $K_{0,t-s};$ 

\item for all $r\leq s\leq t,$ $x\in M$ and any measurable presentation $\mathfrak{p}_{t-s}$ of the law of $K_{s,t},$ 
$$
K_{r,t}(x)=K_{r,s}\mathfrak{p}_{t-s}(K_{s,t})(x) \ \  \mathsf{P}-\mbox{a.s.};
$$

\item if $t_1\leq \ldots\leq t_n,$ then $K_{t_1,t_2},\ldots,K_{t_{n-1},t_n}$ are mutually independent;

\item for any $f\in C_0(M)$  and $\varepsilon>0,$
$$
\lim_{t\to 0+}\sup_{x\in M}\mathsf{P}\left\{|K_{0,t}f(x)-f(x)|\geq \varepsilon\right\}=0;
$$

\item for any $f\in C_0(M),$ $x\in M,$ $t\geq 0$ and $\varepsilon>0,$
$$
\lim_{y\to x}\mathsf{P}\{|K_{0,t}f(y)-K_{0,t}f(x)|\geq \varepsilon\}=0, \ \lim_{y\to \infty}\mathsf{P}\{|K_{0,t}f(y)|\geq \varepsilon\}=0.
$$

\end{enumerate}

\end{dfn}

Let $\nu_t$ denote the law of $K_{0,t}.$ Clearly, $(\nu_t: t\geq 0)$ is a Feller convolution semigroup in the space   of kernels on $M$. The converse result is also true: if $(\nu_t: t\geq 0)$ is a Feller convolution semigroup in the space  of kernels on $M,$ then there exists  a stochastic flow of kernels $K=(K_{s,t}:-\infty<s\leq t<\infty)$ in $M,$  such that for all $s\leq t$ the law of $K_{s,t}$ coincides with $\nu_{t-s}$ \cite[Th 2.1]{LeJan_Raimond_1}.  We prove that such stochastic flow can be always constructed as a measurable function from $(s,t,\omega)$ that satisfies relations $\mathfrak{p}_{t-s}(K_{s,t}(\omega))=K_{s,t}(\omega)$ without exceptions in $(s,t,\omega).$

\begin{thm}
\label{thm:SFK}
Let $(\nu_t:t\geq 0)$ be a Feller convolution semigroup in the space of kernels on $M$ and let $\mathfrak{p}$ be a common idempotent measurable representation of measures $\nu_t$ (Theorem \ref{thm:FCS_kernels}). There exists a stochastic flow of kernels $K$ in $M,$ such that 

\begin{enumerate}

\item For all $s\leq t$ the law of $K_{s,t}$ coincides with $\nu_{t-s};$

\item The mapping $(s,t,\omega)\mapsto K_{s,t}(\omega)$ is jointly measurable;

\item $K_{s,s}(\omega)(x)=\delta_x$ for all $s\in \mathbb{R},$ $x\in M,$ $\omega\in \Omega;$

\item $\mathfrak{p}(K_{s,t}(\omega))=K_{s,t}(\omega)$ for all $s\leq t$ and $\omega\in \Omega.$ 

\end{enumerate}

\end{thm}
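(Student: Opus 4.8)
The plan is to build the flow on a canonical product space indexed by dyadic time grids, pass to a projective limit over the grids, and then extend continuously in time. Concretely, for each $n\geq 0$ let $D_n=2^{-n}\mathbb{Z}$ be the dyadic grid of mesh $2^{-n}$, and on the product space $\Omega_n = E^{D_n}$ (one independent $E$-valued coordinate, with law $\nu_{2^{-n}}$, for each consecutive pair of grid points) define $K^{(n)}_{s,t}$ for $s,t\in D_n$ by the ordered convolution $K^{(n)}_{s,t}=K_{s,s+2^{-n}}\,\mathfrak{p}\bigl(K_{s+2^{-n},s+2\cdot 2^{-n}}\bigr)\cdots\mathfrak{p}\bigl(K_{t-2^{-n},t}\bigr)$ — here the single idempotent presentation $\mathfrak{p}$ from Theorem \ref{thm:FCS_kernels} is used at every step, which already forces $\mathfrak{p}(K^{(n)}_{s,t})=K^{(n)}_{s,t}$ because $\mathfrak{p}\circ\mathfrak{p}=\mathfrak{p}$ and $\mathfrak{p}(K_1\mathfrak{p}(K_2))=K_1\mathfrak{p}(K_2)$ (the last kernel in the composition is already a fixed point of $\mathfrak{p}$, so applying $\mathfrak{p}$ to the whole product changes nothing). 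The convolution semigroup property $\nu_s*\nu_t=\nu_{s+t}$ together with independence of the coordinates guarantees that the law of $K^{(n)}_{s,t}$ is $\nu_{t-s}$ and that the family at level $n$ is consistent with the family at level $n+1$ on the coarser grid $D_n\subset D_{n+1}$, so by Kolmogorov's extension theorem there is a single probability space $(\Omega,\mathcal{A},\mathsf{P})$ carrying $K_{s,t}$ for all dyadic $s\leq t$, with all four listed properties holding on the dyadic grid, and with $K_{s,s}(\omega)(x)=\delta_x$ built in by convention.

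Next I would extend to all real $s\leq t$. Fix a dyadic $s$ and use Proposition \ref{prp:convergence} (the promised approximation-in-time result): as the grid is refined, the dyadic approximants of $K_{s,t}$ converge in an appropriate sense, and the Feller properties (2),(4),(5) of the semigroup give control of the modulus of continuity in $t$, allowing one to define $K_{s,t}$ for non-dyadic $t$ as a suitable limit and then, by a symmetric argument, for non-dyadic $s$. The limit is again a fixed point of $\mathfrak{p}$: since $\mathfrak{p}$ is a measurable presentation, $\{K:\mathfrak{p}(K)=K\}$ is not closed in general, so here one must be a little careful — the cleanest route is to note that at every dyadic stage the approximant lies in the set $\{K=K_1\mathfrak{p}(K_2)\}$, and to arrange the limiting procedure so that the last factor in the relevant composition is always of the form $\mathfrak{p}(\cdot)$; alternatively, redefine $K_{s,t}:=\mathfrak{p}(K_{s,t})$ at the very end, which costs nothing because $\mathfrak{p}$ is a presentation and leaves all finite-dimensional laws (hence all the stated properties, which are almost-sure statements in $x$) unchanged while making property (4) hold identically in $(s,t,\omega)$.

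Joint measurability in $(s,t,\omega)$ is obtained by exhibiting $K_{s,t}$ as a pointwise limit of the maps $(s,t,\omega)\mapsto K^{(n)}_{\lfloor s\rfloor_n,\lceil t\rceil_n}(\omega)$ where $\lfloor\cdot\rfloor_n,\lceil\cdot\rceil_n$ round to $D_n$; each such map is measurable because on each of the finitely-overlapping rectangles $[\,k2^{-n},(k+1)2^{-n})\times[\,l2^{-n},(l+1)2^{-n})$ it agrees with the fixed measurable element $K^{(n)}_{k2^{-n},(l+1)2^{-n}}$, and a pointwise limit of measurable $(E,\mathcal{E})$-valued maps is measurable since $\mathcal{E}$ is the cylindrical $\sigma$-field generated by the countably-many evaluation-type maps $K\mapsto \langle f,K(x)\rangle$ for $f,x$ ranging over countable dense sets. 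Finally I would verify the cocycle identity (2): for $r\leq s\leq t$ dyadic it holds by the very definition of the ordered convolution together with idempotence of $\mathfrak{p}$, and for general $r\leq s\leq t$ it passes to the limit using Proposition \ref{prp:approximation} and the independence of increments; the independence property (3) is inherited from independence of the product coordinates and is preserved under the limits. The main obstacle I anticipate is exactly the interaction between the non-closedness of the fixed-point set of $\mathfrak{p}$ and the limiting procedures in both space and time: one must keep the "last factor is a $\mathfrak{p}$-image" structure intact through every approximation, or else absorb the defect by a final application of $\mathfrak{p}$ and check that this does not disturb joint measurability — this is where Propositions \ref{prp:approximation} and \ref{prp:convergence} do the real work.
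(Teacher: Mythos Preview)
Your architecture is the paper's: dyadic grids, an inverse limit over refinements, extension to real times via the approximation results, and a final application of $\mathfrak{p}$ to force property~(4). Two steps, however, do not go through as written.

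First, the identity $\mathfrak{p}(K_1\mathfrak{p}(K_2))=K_1\mathfrak{p}(K_2)$ is false in general. The presentation $\mathfrak{p}=i\circ e$ restricts a kernel to the countable set $Z$ and re-extends via $\ell$; that the second factor is $\mathfrak{p}$-fixed says nothing about the composite kernel $x\mapsto K_1\mathfrak{p}(K_2)(x)$. Only the almost-sure statement $\mathfrak{p}(K_1\mathfrak{p}(K_2))(x)=K_1\mathfrak{p}(K_2)(x)$, for each fixed $x$, holds. Your fallback ``redefine $K_{s,t}:=\mathfrak{p}(K_{s,t})$ at the end'' is exactly what the paper does, both at the dyadic stage (Definition~\ref{def:1.6.1}(2)) and after the time limit (Definition~\ref{def:SFK}); but note that then the dyadic evolutionary property is \emph{not} ``by definition'', since ${}^DK_{r,t}$, ${}^DK_{r,s}$, ${}^DK_{s,t}$ are defined at possibly different levels $n$ and one must show that changing level affects $K(\cdot)(x)$ only on null sets of the running random measure --- this is the content of Proposition~\ref{prop:1.6.3_aux} and Remark~\ref{rem:1.6.5}.

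Second, and more structurally: you set up the inverse limit over products of $(E,\mathcal{E})$ and invoke Kolmogorov. But $(E,\mathcal{E})$ is not standard Borel --- the cylindrical $\sigma$-field is generated by uncountably many evaluations, and two kernels may agree on any countable set yet differ --- so inverse-limit theorems do not apply to a projective system built from $E$. The paper's remedy is to never take the projective limit in $E$: the building block at each level is the \emph{Polish} space $(\mathcal{P}(M)^I,\Pi_{2^{-n}})^{\otimes\mathbb{Z}}$, the coarsening maps $\pi_{n-1,n}$ are shown to be measurable and surjective between Polish spaces, and Parthasarathy's theorem yields $(\Omega,\mathcal{A},\mathsf{P})$. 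Only afterwards does one pass to $E$ via $i:\mathcal{P}(M)^I\to E$. This is not cosmetic: it is what makes the inverse limit exist, and it also makes every elementary kernel $i(\omega^n_l)$ automatically $\mathfrak{p}$-fixed (since $\mathfrak{p}\circ i=i$), which is what drives the almost-sure identities above.
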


\section{Proof of the Theorem \ref{thm:FCS_kernels}}
\label{sec:main_thm_1}

\subsection{Probability measures $\Pi^{(n)}_t(x)$.}

Out of the sequence $(\mathsf{P}^{(n)}: n\geq 1)$ we construct for any $n\geq 1,$ $x\in M^n$ and $t\geq 0$ a probability measure $\Pi^{(n)}_t(x)$ on $\mathcal{P}(M)^n$ which will be the distribution of $K\mapsto (K(x_1),\ldots,K(x_n))$ under $\nu_t.$

Recall the dense algebra $\mathbb{A}_n(\hat{M})$ in the space of continuous functions on $\mathcal{P}(\hat{M})^n.$ Let $g\in \mathbb{A}_n(\hat{M})$ be of the form \eqref{eq:functions_algebra} with $f\in C(\hat{M}^N),$ $(i_1,\ldots,i_N)\in \{1,\ldots,n\}^N.$  For $x\in M^n$ and $t\geq 0$ define 
$$
\Pi^{(n)}_t(x) g=\int_{M^N} f(y) \mathsf{P}^{(N)}_t((x_{i_1},\ldots,x_{i_N}),dy).
$$

\begin{lem}
\label{lem:Pi_correctness} $\Pi^{(n)}_t(x)$ is a correctly defined linear non-negative functional on $\mathbb{A}_n(\hat{M}),$ such that $\Pi^{(n)}_t(x)1=1.$
\end{lem}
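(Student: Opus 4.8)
The plan is to show that $\Pi^{(n)}_t(x)$ is unambiguous by representing it, for every triple $(N,f,(i_1,\dots,i_N))$ realising a given $g\in\mathbb{A}_n(\hat{M})$ in the form \eqref{eq:functions_algebra}, as one and the same expectation $\mathsf{E}\,[\,g(\Theta_1,\dots,\Theta_n)\,]$ over random probability measures $\Theta_1,\dots,\Theta_n$ on $\hat{M}$ that depend only on $n$, $x$ and $t$. A triple is highly non-unique, so well-definedness is the real content of the lemma; once it is established, linearity, non-negativity and $\Pi^{(n)}_t(x)1=1$ follow at once from the same representation.

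First I would fix $t$ and $x\in M^n$ and build a random array $(Z^{(a)}_j:1\le a\le n,\ j\ge 1)$ with values in $M$ such that, for all multiplicities $m_1,\dots,m_n\ge 0$ (not all zero), the sub-array $(Z^{(a)}_j:a\le n,\ j\le m_a)$ has law $\mathsf{P}^{(m_1+\dots+m_n)}_t(w,\cdot)$, where $w$ is the point of $M^{m_1+\dots+m_n}$ whose first $m_1$ coordinates equal $x_1$, the next $m_2$ equal $x_2$, and so on. By \eqref{eq:TF_consistency} applied to injections — which cover both passing to a sub-collection of coordinates and permuting coordinates — these laws form a projective family, so the array exists by Kolmogorov's extension theorem, and the same relation shows that the law of the array is invariant under permuting the index $j$ within each row $a$ separately; that is, the array is separately exchangeable. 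Applying the de Finetti theorem in its form for separately exchangeable arrays with finitely many rows, one obtains random probability measures $\Theta_1,\dots,\Theta_n$ on $\hat{M}$ — concretely, $\Theta_a$ may be taken as the almost sure weak limit of $\frac1k\sum_{j\le k}\delta_{Z^{(a)}_j}$ — with the property that, conditionally on $(\Theta_1,\dots,\Theta_n)$, all the entries $Z^{(a)}_j$ are independent and $Z^{(a)}_j\sim\Theta_a$. The joint law of $(\Theta_1,\dots,\Theta_n)$ is then a fixed probability measure on $\mathcal{P}(\hat{M})^n$ determined by $n$, $x$ and $t$.

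Now take any representation $g(\mu_1,\dots,\mu_n)=\int_{\hat{M}^N}f(y)\,(\mu_{i_1}\otimes\dots\otimes\mu_{i_N})(dy)$ with $f\in C(\hat{M}^N)$ and put $c_k:=\#\{k'\le k:i_{k'}=i_k\}$. A further use of \eqref{eq:TF_consistency}, with the permutation that rearranges $w$ into $(x_{i_1},\dots,x_{i_N})$, shows that $(Z^{(i_1)}_{c_1},\dots,Z^{(i_N)}_{c_N})$ has law $\mathsf{P}^{(N)}_t((x_{i_1},\dots,x_{i_N}),\cdot)$ (the indices $c_k$ being distinct inside each block $\{k:i_k=a\}$). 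Conditioning on $(\Theta_1,\dots,\Theta_n)$ and using the conditional independence,
$$
\begin{aligned}
\int_{M^N}f(y)\,\mathsf{P}^{(N)}_t\big((x_{i_1},\dots,x_{i_N}),dy\big)
&=\mathsf{E}\Big[\int_{\hat{M}^N}f\,d\big(\Theta_{i_1}\otimes\dots\otimes\Theta_{i_N}\big)\Big]\\
&=\mathsf{E}\,[\,g(\Theta_1,\dots,\Theta_n)\,].
\end{aligned}
$$
The right-hand side depends on $g$ alone, so $\Pi^{(n)}_t(x)$ is well defined; it is linear and non-negative because $g\mapsto\mathsf{E}[g(\Theta_1,\dots,\Theta_n)]$ is (for the latter, $g\ge0$ on $\mathcal{P}(\hat{M})^n$ forces $g(\Theta_1,\dots,\Theta_n)\ge0$ since each $\Theta_a\in\mathcal{P}(M)\subset\mathcal{P}(\hat{M})$), and $\Pi^{(n)}_t(x)1=\mathsf{E}\,[1]=1$.

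The one genuinely non-routine ingredient is the finite-row separately exchangeable de Finetti theorem used to produce $(\Theta_1,\dots,\Theta_n)$: its proof is an induction on $n$ — conditioning on all but one row preserves separate exchangeability of the remaining sub-array — but the step showing that, given the directing measures, distinct rows become conditionally independent needs some care with regular conditional distributions. If one prefers to bypass this, well-definedness (hence linearity and normalisation) can be obtained elementarily: bring every representation of $g$ to a normal form with sorted index tuple and with $f$ symmetric inside each block of equal indices, using \eqref{eq:TF_consistency} for permutations; pad two representations of the same $g$ to a common multiplicity vector, using that every kernel $\mathsf{P}^{(\cdot)}_t(\cdot)$ and every $\mu_a$ is a probability measure; and then deduce that the symmetrised kernels coincide by evaluating $g$ on finitely supported measures and invoking a polynomial-identity argument together with continuity. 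Extracting non-negativity along those lines is awkward, however, which is why I would carry out the whole proof through the de Finetti realisation above.
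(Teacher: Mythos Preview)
Your argument is correct and takes a genuinely different route from the paper. You realise $\Pi^{(n)}_t(x)$ at once as the law of the de~Finetti directing measures $(\Theta_1,\dots,\Theta_n)$ of a multi-exchangeable array built from the consistent family $(\mathsf{P}^{(N)}_t)$, and read off well-definedness, linearity, positivity and normalisation from the single identity $\Pi^{(n)}_t(x)g=\mathsf{E}[g(\Theta_1,\dots,\Theta_n)]$. The paper instead proceeds in two separate elementary steps: for well-definedness it reduces to a common index tuple, symmetrises $f$ inside each block of equal indices, and shows that a symmetric $f$ with $\int f\,d\mu^{\otimes N}=0$ for all $\mu$ must vanish identically by evaluating on convex combinations of Dirac masses and differentiating (essentially the polarisation identity); for non-negativity it takes $L$ copies of each $x_a$, evaluates $g$ on the empirical measures $\tfrac1L\sum_{j\le L}\delta_{y_{(a-1)L+j}}$ under $\mathsf{P}^{(Ln)}_t(x^{(L)},\cdot)$, expands, and uses consistency to see that the dominant terms equal $\Pi^{(n)}_t(x)g$ with an $O(L^{-1})$ remainder. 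Your de~Finetti approach is more conceptual and unified --- it even produces the representing measure on $\mathcal{P}(\hat{M})^n$ directly, which the paper only obtains afterwards via Riesz --- at the cost of invoking the row-wise de~Finetti theorem as a black box; the paper's argument is entirely self-contained, and its positivity proof is in fact a discretised, hands-on version of the same law-of-large-numbers mechanism that underlies your $\Theta_a=\lim_k\tfrac1k\sum_{j\le k}\delta_{Z^{(a)}_j}$.
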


\begin{proof} Let us check correctness of the definition of $\Pi^{(n)}_t(x).$ Assume that $g\in \mathbb{A}_n(\hat{M})$ has two representations: for all $(\mu_1,\ldots,\mu_n)\in \mathcal{P}(\hat{M})^n$

$$
\begin{aligned}
g(\mu_1,\ldots,\mu_n)&=\int_{\hat{M}^N} f(y)\left(\mu_{i_1}\otimes \ldots\otimes \mu_{i_N}\right)(dy)\\
&=\int_{\hat{M}^R} v(y)\left(\mu_{j_1}\otimes \ldots\otimes \mu_{j_R}\right)(dy),
\end{aligned}
$$
where $f\in C(\hat{M}^N),$ $(i_1,\ldots,i_N)\in \{1,\ldots,n\}^N,$ $v\in C(\hat{M}^R),$ $(j_1,\ldots,j_R)\in \{1,\ldots,n\}^R.$ Consider injections $\sigma:\{1,\ldots,N\}\to \{1,\ldots,N+R\},$ $\delta:\{1,\ldots,R\}\to \{1,\ldots,N+R\},$ defined by
$$
\sigma(i)=i, \ 1\leq i\leq N, \ \delta(j)=N+j, \ 1\leq j\leq R.
$$
Then 
$$
\begin{aligned}
g(\mu_1,\ldots,\mu_n)&=\int_{\hat{M}^{N+R}} f\circ \pi_\sigma(y)\left(\mu_{i_1} \otimes \ldots\otimes \mu_{i_N}\otimes \mu_{j_1}\otimes \ldots\otimes \mu_{j_R}\right)(dy)\\
&=\int_{\hat{M}^{N+R}} v\circ \pi_\delta(y)\left(\mu_{i_1} \otimes \ldots\otimes \mu_{i_N}\otimes \mu_{j_1}\otimes \ldots\otimes \mu_{j_R}\right)(dy).
\end{aligned}
$$
By consistency,
$$
\int_{M^{N+R}}f\circ \pi_\sigma(y)\mathsf{P}^{(N+R)}_t((x_{i_1},\ldots,x_{i_N},x_{j_1},\ldots,x_{j_R}),dy)=\int_{M^N}f(y)\mathsf{P}^{(N)}_t((x_{i_1},\ldots,x_{i_N}),dy),
$$
$$
\int_{M^L}v\circ \pi_\delta(y)\mathsf{P}^{(N+R)}_t((x_{i_1},\ldots,x_{i_N},x_{j_1},\ldots,x_{j_R}),dy)=\int_{M^R}v(y)\mathsf{P}^{(R)}_t((x_{j_1},\ldots,x_{j_R}),dy).
$$
So, it is enough to consider the case $(i_1,\ldots,i_N)=(j_1,\ldots,j_R).$ Further, it is enough to prove that equality
\begin{equation}
\label{eq:integral_f_zero}
\int_{\hat{M}^N}f(y) \left(\mu_{i_1}\otimes \ldots\otimes \mu_{i_N}\right)(dy)=0, \ (\mu_1,\ldots,\mu_n)\in \mathcal{P}(\hat{M})^n,
\end{equation}
implies 
$$
\int_{M^N}f(y)\mathsf{P}^{(N)}_t((x_{i_1},\ldots,x_{i_N}),dy)=0.
$$ 
Assume that \eqref{eq:integral_f_zero} holds. For $s\in \{1,\ldots,n\}$ denote 
$$
I_s=\{k\in \{1,\ldots,N\}: \ i_k=s\}
$$
and let $m_s$ be the number of elements in $I_s$. Denote by $S_{N,N}(I_1,\ldots,I_n)$ the set of all permutations $\sigma\in S_{N,N}$ such that  $\sigma(I_s)=I_s$ for all $s\in \{1,\ldots,n\}.$ 
Let 
$$
\tilde{f}(y)=\frac{1}{m_1! \ldots  m_n!}\sum_{\sigma\in S_{N,N}(I_1,\ldots,I_n)}f\circ \pi_\sigma(y).
$$
We note that 
$$
\int_{\hat{M}^N}\tilde{f}(y)\left(\mu_{i_1}\otimes \ldots\otimes \mu_{i_N}\right)(dy)=0.
$$
By consistency,
$$
\begin{aligned}
\int_{M^N} & \tilde{f}(y)\mathsf{P}^{(N)}_t((x_{i_1},\ldots,x_{i_N}),dy)\\
&=\frac{1}{m_1!\ldots m_n!}\sum_{\sigma\in S_{N,N}(I_1,\ldots,I_n)}\int_{M^N} f\circ \pi_\sigma(y)\mathsf{P}^{(N)}_t((x_{i_1},\ldots,x_{i_N}),dy)\\
&=\int_{M^N}f(y) \mathsf{P}^{(N)}_t((x_{i_1},\ldots,x_{i_N}),dy).
\end{aligned}
$$
So, we may assume that $f\circ \pi_\sigma=f$ for all $\sigma\in S_{N,N}(I_1,\ldots,I_n).$ We will show that equality \eqref{eq:integral_f_zero} implies $f(z)=0$ for all $z\in \hat{M}^N.$ By Fubini's theorem it is enough to consider the case $n=1.$ In this case $f\in C(\hat{M}^N)$ is symmetric and 
$$
\int_{\hat{M}^N}f(y)\mu^{\otimes N}(dy)=0
$$ for all finite measures $\mu$ on $\hat{M}.$ Let $z\in \hat{M}^N.$ Then 
$$
\int_{\hat{M}^N} f(y)\left(p_1\delta_{z_1}+\ldots+p_N\delta_{z_N}\right)^{\otimes N}(dy)=0
$$
for all $p_1,\ldots,p_N>0.$ Expanding and using symmetry of $f$, we get 
$$
\sum_{k_1+\ldots+k_N=N}\frac{N!}{k_1!\ldots k_N!}p^{k_1}_1\ldots p^{k_N}_N f(\underbrace{z_1,\ldots,z_1}_{k_1},\ldots,\underbrace{z_N,\ldots,z_N}_{k_N})=0.
$$
Differentiating in $p_1,\ldots,p_N$ at $p_1=\ldots=p_N=0$ we find that $f(z)=0.$ Correctness of the definition of $\Pi^{(n)}_t(x)$ is verified.  Independence of $\Pi^{(n)}_t(x)g$ from the representation of $g$ in the form \eqref{eq:functions_algebra} implies linearity of $\Pi^{(n)}_t(x).$

It remains to verify that the linear functional  $\Pi^{(n)}_t(x):\mathbb{A}_n(\hat{M})\to \mathbb{R}$ is non-negative. Assume that for all $(\mu_1,\ldots,\mu_n)\in \mathcal{P}(\hat{M}^n)$
$$
g(\mu_1,\ldots,\mu_n)=\int_{\hat{M}^N}f(y)\left(\mu_{i_1}\otimes \ldots \otimes \mu_{i_N}\right)(dy)\geq 0.
$$
As before, denote $I_s=\{k\in \{1,\ldots,N\}: \ i_k=s\},$ $s\in \{1,\ldots,n\},$
and let $m_s$ be the number of elements in $I_s$.
For an integer $L$ denote 
$$
x^{(L)}=(\underbrace{x_1,\ldots,x_1}_{L},\underbrace{x_2,\ldots,x_2}_{L},\ldots,\underbrace{x_n,\ldots,x_n}_{L}).
$$
We have
$$
\int_{M^{Ln}}g\left(\frac{1}{L}\sum^L_{j=1}\delta_{y_{j}},\frac{1}{L}\sum^L_{j=1}\delta_{y_{L+j}},\ldots\frac{1}{L}\sum^L_{j=1}\delta_{y_{(n-1)L+j}}\right)\mathsf{P}^{(Ln)}_t(x^{(L)},dy)\geq 0.
$$
Hence,
\begin{equation}
\label{eq:non-negativity}
\frac{1}{L^N} \sum^L_{j_1,\ldots,j_N=1}
\int_{M^{Ln}}f\bigg(y_{(i_1-1)L+j_1},\ldots,y_{(i_N-1)L+j_N}\bigg)\mathsf{P}^{(Ln)}_t(x^{(L)},dy)
\geq 0.
\end{equation}

Assume that for every $s\in \{1,\ldots,n\}$ all $j_k$ with $k\in I_s$ are distinct. By consistency,
$$
\begin{aligned}
\int_{M^{Ln}}&f\bigg(y_{(i_1-1)L+j_1},\ldots,y_{(i_N-1)L+j_N}\bigg)\mathsf{P}^{(Ln)}_t(x^{(L)},dy)\\
&=\int_{M^N}f(y)\mathsf{P}^{(N)}_t((x_{i_1},\ldots,x_{i_N}),dy)=\Pi^{(n)}_t(x)g.
\end{aligned}
$$
So, \eqref{eq:non-negativity} implies 
$$
\frac{\prod^n_{i=1}L(L-1)\ldots (L-m_i+1)}{L^N}\Pi^{(n)}_t(x)g+R_L\geq 0,
$$
where 
$$
|R_L|\leq \left(1-\frac{\prod^n_{i=1}L(L-1)\ldots (L-m_i+1)}{L^N}\right)\|f\|.
$$
Taking the limit $L\to\infty,$ we obtain $\Pi^{(n)}_t(x)g\geq 0.$

\end{proof}

Lemma \ref{lem:Pi_correctness} implies that for every $n\geq 1,$ $x\in M^n$ and $t\geq 0$ the linear functional $\Pi^{(n)}_t(x)$ is represented by a probability measure on $\mathcal{P}(\hat{M})^n.$ This measure will be also denoted by $\Pi^{(n)}_t(x).$ In particular, the equality 
$$
\begin{aligned}
\int_{\mathcal{P}(\hat{M})^n}&\left(\int_{\hat{M}^N}f(y)\left(\mu_{i_1}\otimes \ldots \otimes \mu_{i_N}\right)(dy)\right)\Pi^{(n)}_t(x,d\mu)\\
&=\int_{M^N}f(y)\mathsf{P}^{(N)}_t((x_{i_1},\ldots,x_{i_N}),dy)
\end{aligned}
$$
holds for all $f\in C(\hat{M}^N),$ $(i_1,\ldots,i_N)\in \{1,\ldots,n\}^N.$ Next lemmata contain  some useful  properties of measures $\Pi^{(n)}_t(x).$

\begin{lem}
\label{lem:properties_of_Pi_1} \begin{enumerate}

\item For all $\sigma\in S_{k,n},$ $1\leq k\leq n,$ and all  $x\in M^n,$  $t\geq 0,$ 
$$
\Pi^{(n)}_t(x)\circ \pi^{-1}_{\sigma}=\Pi^{(k)}_t(\pi_{\sigma}x).
$$

\item For all $x\in M,$ $t\geq 0,$
$$
\Pi^{(2)}_t((x,x),\Delta)= 1,
$$
where $\Delta=\{(\mu,\mu): \ \mu\in\mathcal{P}(\hat{M})\}.$

\item For all  $n\geq 1,$ $x\in M^n,$ $t\geq 0,$
$$
\Pi^{(n)}_t(x,\mathcal{P}(M)^n)=1.
$$

\item For any  $g\in C(\mathcal{P}(\hat{M})^n)$ the mapping $(t,x)\mapsto \Pi^{(n)}_t(x)g$ is continuous.

\end{enumerate}
\end{lem}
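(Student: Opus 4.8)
The plan is to prove the four assertions one at a time, in each case rewriting the claim as an identity (or an inequality) between integrals of functions from the algebra $\mathbb{A}_n(\hat{M})$ against the measures $\Pi^{(n)}_t(x)$, evaluating those integrals through the defining relation for $\Pi^{(n)}_t(x)$, and finally — where a statement about all of $C(\mathcal{P}(\hat{M})^n)$ or about Borel sets is needed — passing from $\mathbb{A}_n(\hat{M})$ to its uniform closure by Stone--Weierstrass, equivalently using the $\hat{M}$-version of Lemma~\ref{lem:distribution_on_P(M)^n}. For (1), I would observe that $g\circ\pi_\sigma\in\mathbb{A}_n(\hat{M})$ whenever $g\in\mathbb{A}_k(\hat{M})$, with the index tuple $(i_1,\dots,i_N)$ replaced by $(\sigma(i_1),\dots,\sigma(i_N))\in\{1,\dots,n\}^N$ and the same $f$; inserting this into the definition and using $(\pi_\sigma x)_{i_\ell}=x_{\sigma(i_\ell)}$ gives $\int g\,d\big(\Pi^{(n)}_t(x)\circ\pi_\sigma^{-1}\big)=\int_{M^N}f\,d\mathsf{P}^{(N)}_t(x_{\sigma(i_1)},\dots,x_{\sigma(i_N)})=\int g\,d\Pi^{(k)}_t(\pi_\sigma x)$ for every $g\in\mathbb{A}_k(\hat{M})$, which suffices since such measures are determined by these integrals.

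For (2), the key is a good choice of test function: for $\phi\in C(\hat{M})$ set $g_\phi(\mu_1,\mu_2)=\big(\int\phi\,d\mu_1-\int\phi\,d\mu_2\big)^2$, which, after expanding the square, belongs to $\mathbb{A}_2(\hat{M})$ and is built from $f(y,z)=\phi(y)\phi(z)$ together with the three index tuples $(1,1),(1,2),(2,2)$. When the argument of $\Pi^{(2)}_t$ is $(x,x)$, all three tuples produce the point $(x,x)\in M^2$, so the three terms of $\Pi^{(2)}_t((x,x))g_\phi$ all equal $\int_{M^2}\phi(y)\phi(z)\,\mathsf{P}^{(2)}_t((x,x),dy\,dz)$ up to the coefficients $1,-2,1$; hence $\Pi^{(2)}_t((x,x))g_\phi=0$. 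Since $g_\phi\ge 0$, this forces $\int\phi\,d\mu_1=\int\phi\,d\mu_2$ for $\Pi^{(2)}_t((x,x))$-a.e.\ $(\mu_1,\mu_2)$, and running $\phi$ over a countable dense subset of the separable space $C(\hat{M})$ yields $\mu_1=\mu_2$ a.s., i.e.\ $\Pi^{(2)}_t((x,x),\Delta)=1$.

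For (3), by part (1) (applied with $\sigma\in S_{1,n}$, $\sigma(1)=i$) the $i$-th coordinate marginal of $\Pi^{(n)}_t(x)$ is $\Pi^{(1)}_t(x_i)$, so by a finite union it is enough to show $\mu(M)=1$ for $\Pi^{(1)}_t(z)$-a.e.\ $\mu$ and every $z\in M$, $t\ge 0$. Taking $g(\mu)=\int\zeta_j\,d\mu\in\mathbb{A}_1(\hat{M})$ and applying the definition gives $\int\big(\int\zeta_j\,d\mu\big)\,\Pi^{(1)}_t(z,d\mu)=\int_M\zeta_j\,d\mathsf{P}^{(1)}_t(z)$. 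Letting $j\to\infty$ and using $\mathbf{1}_{L_j}\le\zeta_j\le\mathbf{1}_{L_{j+1}}$ with $L_j\uparrow M$, the left side converges to $\int\mu(M)\,\Pi^{(1)}_t(z,d\mu)$ (dominated convergence) and the right side to $\mathsf{P}^{(1)}_t(z,M)=1$; as $\mu(M)\le 1$ identically, this yields $\mu(M)=1$ a.s.

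For (4), I would first reduce to $g\in\mathbb{A}_n(\hat{M})$: $\mathbb{A}_n(\hat{M})$ is uniformly dense in $C(\mathcal{P}(\hat{M})^n)$ by Stone--Weierstrass, and since every $\Pi^{(n)}_t(x)$ is a probability measure, $|\Pi^{(n)}_t(x)g-\Pi^{(n)}_t(x)g'|\le\|g-g'\|_\infty$ uniformly in $(t,x)$, so continuity for $g$ in the algebra passes to the uniform limit. For $g$ of the form \eqref{eq:functions_algebra} one has $\Pi^{(n)}_t(x)g=\int_{\hat{M}^N}f(y)\,\mathsf{P}^{(N)}_t\big((x_{i_1},\dots,x_{i_N}),dy\big)$ with $f\in C(\hat{M}^N)$; by the Feller property (as used in the proof of Lemma~\ref{lem:Feller_property}) the map $(t,z)\mapsto\mathsf{P}^{(N)}_t(z)\in\mathcal{P}(M^N)$ is weakly continuous, and composing it with the continuous inclusion $\mathcal{P}(M^N)\hookrightarrow\mathcal{P}(\hat{M}^N)$, the continuous map $x\mapsto(x_{i_1},\dots,x_{i_N})$, and the pairing against $f\in C(\hat{M}^N)$ gives the desired continuity. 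I expect this last item to be the main (though minor) obstacle of the whole lemma: one must verify that weak convergence of $\mathsf{P}^{(N)}_t(z)$ in $\mathcal{P}(M^N)$ is inherited in $\mathcal{P}(\hat{M}^N)$ — which holds because every function in $C(\hat{M}^N)$ restricts to a bounded continuous function on $M^N$ while these probability measures give no mass to $\hat{M}^N\setminus M^N$ — and, more routinely, that all the functions constructed above genuinely lie in $\mathbb{A}_n(\hat{M})$ and that Stone--Weierstrass applies to $\mathbb{A}_n(\hat{M})$ on the compact space $\mathcal{P}(\hat{M})^n$.
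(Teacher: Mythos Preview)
Your proof is correct. Parts (1) and (3) follow essentially the same route as the paper (the paper handles (3) directly for general $n$ with the product test function $\prod_i\int\zeta_k\,d\mu_i$, whereas you reduce to $n=1$ via the marginals obtained in (1); the core computation is the same).

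Parts (2) and (4) are where you genuinely diverge. For (2), the paper establishes the stronger identification $\Pi^{(2)}_t((x,x),F_1\times F_2)=\Pi^{(1)}_t(x,F_1\cap F_2)$ by first showing $\Pi^{(2)}_t((x,x))\,g^{\otimes 2}=\Pi^{(1)}_t(x)\,g^2$ for $g\in\mathbb{A}_1(\hat{M})$ and then passing to closed sets; your ``variance'' argument with $g_\phi(\mu_1,\mu_2)=\big(\int\phi\,d\mu_1-\int\phi\,d\mu_2\big)^2$ is more direct and gets the diagonal concentration without identifying the marginal. For (4), the paper does not appeal to the continuous embedding $\mathcal{P}(M^N)\hookrightarrow\mathcal{P}(\hat{M}^N)$; instead it fixes a compact $L_l^n$ and $T$, truncates $f$ to $f\zeta_j^{\otimes N}\in C_0(M^N)$, uses Lemma~\ref{lem:Feller_property} to make the truncation error uniformly small on $[0,T]\times L_l^n$, and then invokes the Feller property for the $C_0$-function $f\zeta_j^{\otimes N}$. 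Your observation that every $f\in C(\hat{M}^N)$ restricts to a bounded continuous function on $M^N$, so that weak continuity of $(t,z)\mapsto\mathsf{P}^{(N)}_t(z)$ in $\mathcal{P}(M^N)$ already yields continuity of $\int f\,d\mathsf{P}^{(N)}_t(z)$, bypasses the truncation entirely and is shorter; the paper's approach, on the other hand, stays within $C_0$ and the exhaustive-sequence machinery used elsewhere.
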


\begin{proof} \begin{enumerate}

\item Let $\sigma\in S_{k,n}.$ Consider $g\in \mathbb{A}_k(\hat{M})$ of the form 
$$
g(\mu_1,\ldots,\mu_k)=\int_{\hat{M}^N}f(y)\left(\mu_{i_1}\otimes \ldots \otimes \mu_{i_N}\right)(dy),
$$
where $f\in C(\hat{M}^N),$ $(i_1,\ldots,i_N)\in \{1,\ldots,k\}^N.$ Then 
$$
g\circ \pi_\sigma(\mu_1,\ldots,\mu_n)=g(\mu_{\sigma(1)},\ldots,\mu_{\sigma(k)})=\int_{\hat{M}^N}f(y)\left(\mu_{\sigma(i_1)}\otimes \ldots \otimes \mu_{\sigma(i_N)}\right)(dy).
$$
So,
$$
\begin{aligned}
\int_{\mathcal{P}(\hat{M})^n}g\circ \pi_\sigma(\mu_1,\ldots,\mu_n) \Pi^{(n)}_t(x,d\mu)&=\int_{M^N}f(y)\mathsf{P}^{(N)}_t((x_{\sigma(i_1)},\ldots,x_{\sigma(i_N)}),dy)\\
&=\int_{\mathcal{P}(\hat{M})^k}g(\mu_1,\ldots,\mu_k) \Pi^{(k)}_t(\pi_\sigma x,d\mu).
\end{aligned}
$$
Equality $\Pi^{(n)}_t(x)\circ \pi^{-1}_{\sigma}=\Pi^{(k)}_t(\pi_\sigma x)$ is verified.

\item Let $g\in \mathbb{A}_1(\hat{M}),$ $g(\mu)=\int_{\hat{M}^N}f(y)\mu^{\otimes N}(dy).$ Then 
$$
\Pi^{(2)}_t g^{\otimes 2}((x,x))=\int_{M^{2N}}f^{\otimes 2}(y)\mathsf{P}^{(2N)}_t (\underbrace{(x,\ldots,x)}_{2N},dy)=\Pi^{(1)}_t g^2(x).
$$
By continuity, for all $g_1,g_2\in C(\mathcal{P}(\hat{M}))$ we have 
$$
\int_{\mathcal{P}(\hat{M})^2}g_1(\mu_1)g_2(\mu_2)\Pi^{(2)}_t ((x,x),d\mu)=\int_{\mathcal{P}(\hat{M})}g_1(\mu)g_2(\mu)\Pi^{(1)}_t (x,d\mu).
$$
Hence, for any closed sets $F_1,F_2\subset \mathcal{P}(\hat{M}),$ 
$$
\Pi^{(2)}_t((x,x), F_1\times F_2)=\Pi^{(1)}_t(x, F_1\cap F_2). 
$$
It follows that $\Pi^{(2)}_t((x,x),\Delta)=1.$

\item Let $x\in M^n$ and $g_k(\mu_1,\ldots,\mu_n)=\prod^n_{i=1}\int_{\hat{M}}\zeta_k(y)\mu_i(dy).$ Then
$$
\begin{aligned}
1&\geq \int_{\mathcal{P}(\hat{M})^n}\prod^n_{i=1}\mu_i(M)\Pi^{(n)}_t(x,d\mu)\geq \int_{\mathcal{P}(\hat{M})^n}g_k(\mu_1,\ldots,\mu_n)\Pi^{(n)}_t(x,d\mu)\\
&=\int_{M^n}\prod^n_{i=1} \zeta_k(y_i) \mathsf{P}^{(n)}_t(x,dy)\geq \mathsf{P}^{(n)}_t(x,L^n_k).
\end{aligned}
$$
Taking the limit $k\to\infty$ we deduce that $\int_{\mathcal{P}(\hat{M})^n}\prod^n_{i=1}\mu_i(M)\Pi^{(n)}_t(x,d\mu)=1$ and $\mu_1(M)=\ldots=\mu_n(M)=1$ for $\Pi^{(n)}_t(x)$-a.a. $(\mu_1,\ldots,\mu_n)\in\mathcal{P}(\hat{M})^n.$

\item Let $g\in \mathbb{A}_n(\hat{M})$ be of the form 
$$
g(\mu_1,\ldots,\mu_n)=\int_{\hat{M}^N}f(y)\left(\mu_{i_1}\otimes \ldots\otimes \mu_{i_N}\right)(dy),
$$
where $f\in C(\hat{M}^N),$ $(i_1,\ldots,i_N)\in \{1,\ldots,n\}^N.$ Then
$$
\Pi^{(n)}_t(x)g=\int_{M^N}f(y)\mathsf{P}^{(N)}_t((x_{i_1},\ldots,x_{i_N}),dy).
$$
Fix $l\geq 1$ and $T\geq 0.$ By Feller property of $\mathsf{P}^{(N)}$ (Lemma \ref{lem:Feller_property}) for each $\varepsilon>0$ there exists $j\geq 1$ such that 
$$
\inf_{t\in [0,T], z\in L^N_l}\mathsf{P}^{(N)}_t(z,L^N_j)\geq 1-\varepsilon.
$$
For the function 
$$
g_j(\mu_1,\ldots,\mu_n)=\int_{\hat{M}^N}f(y)\zeta_j^{\otimes N}(y)\left(\mu_{i_1}\otimes \ldots\otimes \mu_{i_N}\right)(dy)
$$
we have an estimate
$$
\begin{aligned}
\ & \sup_{t\in [0,T], x\in L^n_l}\left|\Pi^{(n)}_t(x)g-\Pi^{(n)}_t(x)g_j\right|\\
=&\sup_{t\in [0,T]x\in L^n_l}\left|\int_{M^N\setminus L^N_j}f(y)(1-\zeta_j^{\otimes N}(y))\mathsf{P}^{(N)}_t((x_{i_1},\ldots,x_{i_N}),dy)\right|\leq \|f\|\varepsilon.
\end{aligned}
$$
On the other hand, equality
$$
\Pi^{(n)}_t(x)g_j=\mathsf{P}^{(N)}_t\left(f  \zeta^{\otimes N}_j\right)(x_{i_1},\ldots,x_{i_N})
$$
implies that the function $(t,x)\mapsto \Pi^{(n)}_t(x)g_j$ is continuous. Since $\varepsilon>0$ is arbitrary, we deduce that  the function $(t,x)\mapsto \Pi^{(n)}_t(x)g$ is continuous on $[0,T]\times L^n_l$ and thus on $[0,\infty)\times M^n.$

\end{enumerate}

\end{proof}

Denote $\Delta^c_{\varepsilon}=\{(\mu_1,\mu_2)\in \mathcal{P}(M)^2: d(\mu_1,\mu_2)\geq \varepsilon\}.$

\begin{lem}
\label{lem:properties_of_Pi_2} For any compact $C\subset M,$  $T\geq 0$ and $\varepsilon>0$
$$
\lim_{r\to 0+}\sup_{\substack{t\in [0,T], (x,y)\in C^2 \\  \rho(x,y)\leq r}}\Pi^{(2)}_t((x,y),\Delta^c_\varepsilon)=0.
$$
\end{lem}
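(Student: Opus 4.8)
The plan is to argue by contradiction, combining compactness of $[0,T]\times C^2$ with the joint continuity of $(t,x)\mapsto\Pi^{(2)}_t(x)g$ from Lemma~\ref{lem:properties_of_Pi_1}(4) and the concentration of $\Pi^{(2)}_t((x,x))$ on the diagonal from Lemma~\ref{lem:properties_of_Pi_1}(2). Suppose the claim fails. Then there exist $\delta>0$, a sequence $r_k\downarrow 0$ and points $t_k\in[0,T]$, $(x_k,y_k)\in C^2$ with $\rho(x_k,y_k)\le r_k$ such that $\Pi^{(2)}_{t_k}((x_k,y_k),\Delta^c_\varepsilon)\ge\delta$ for every $k$.

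Using compactness of $[0,T]$ and of $C$, I would pass to a subsequence along which $t_k\to t_0\in[0,T]$, $x_k\to x_0\in C$ and $y_k\to y_0\in C$; since $\rho$ is continuous and $\rho(x_k,y_k)\to 0$, this forces $x_0=y_0$. By Lemma~\ref{lem:properties_of_Pi_1}(4), for every $g\in C(\mathcal{P}(\hat{M})^2)$ one has $\Pi^{(2)}_{t_k}((x_k,y_k))g\to\Pi^{(2)}_{t_0}((x_0,x_0))g$; since $\mathcal{P}(\hat{M})^2$ is a compact metric space, this says precisely that $\Pi^{(2)}_{t_k}((x_k,y_k))\to\Pi^{(2)}_{t_0}((x_0,x_0))$ weakly on $\mathcal{P}(\hat{M})^2$.

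It then remains to read off a contradiction for the set $\Delta^c_\varepsilon$, which is defined through the metric $d$ on $\mathcal{P}(M)$. Let $F$ be the closure of $\Delta^c_\varepsilon$ in the compact space $\mathcal{P}(\hat{M})^2$. By Lemma~\ref{lem:properties_of_Pi_1}(3) all the measures $\Pi^{(2)}_{t_k}((x_k,y_k))$ and the limit $\Pi^{(2)}_{t_0}((x_0,x_0))$ are concentrated on $\mathcal{P}(M)^2$; since the weak topology on $\mathcal{P}(M)$ is the topology induced from $\mathcal{P}(\hat{M})$ and $\Delta^c_\varepsilon$ is $d$-closed in $\mathcal{P}(M)^2$, one checks that $F\cap\mathcal{P}(M)^2=\Delta^c_\varepsilon$. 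Applying the portmanteau theorem on $\mathcal{P}(\hat{M})^2$ to the closed set $F$ and using $\Delta^c_\varepsilon\subset F$ and the standing assumption,
$$
\delta\le\limsup_{k\to\infty}\Pi^{(2)}_{t_k}((x_k,y_k),F)\le\Pi^{(2)}_{t_0}((x_0,x_0),F)=\Pi^{(2)}_{t_0}((x_0,x_0),\Delta^c_\varepsilon),
$$
the last equality because $\Pi^{(2)}_{t_0}((x_0,x_0))$ lives on $\mathcal{P}(M)^2$ and $F\cap\mathcal{P}(M)^2=\Delta^c_\varepsilon$. On the other hand $\Delta^c_\varepsilon$ is disjoint from the diagonal $\Delta$, so Lemma~\ref{lem:properties_of_Pi_1}(2) together with Lemma~\ref{lem:properties_of_Pi_1}(3) gives $\Pi^{(2)}_{t_0}((x_0,x_0),\Delta^c_\varepsilon)=0$, contradicting $\delta>0$.

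I expect the only genuinely delicate point to be this last transfer step: matching the set $\Delta^c_\varepsilon$, which is defined via the metric $d$ on the noncompact space $\mathcal{P}(M)$, with the weak convergence that Lemma~\ref{lem:properties_of_Pi_1}(4) produces only on the compactification $\mathcal{P}(\hat{M})^2$. The resolution is that, by Lemma~\ref{lem:properties_of_Pi_1}(3), no mass escapes to infinity in the limit, so passing to the closure in $\mathcal{P}(\hat{M})^2$ does not enlarge $\Delta^c_\varepsilon$ inside $\mathcal{P}(M)^2$; everything else is a routine compactness-and-portmanteau argument.
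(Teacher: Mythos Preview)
Your proof is correct and follows essentially the same contradiction--compactness--Portmanteau route as the paper. The only difference is cosmetic: where the paper asserts in one line that weak convergence on $\mathcal{P}(\hat{M})^2$ upgrades to weak convergence on $\mathcal{P}(M)^2$ (since all measures involved, including the limit, are supported there by Lemma~\ref{lem:properties_of_Pi_1}(3)) and then applies Portmanteau directly to the closed set $\Delta^c_\varepsilon\subset\mathcal{P}(M)^2$, you instead stay on $\mathcal{P}(\hat{M})^2$ and work with the closure $F$ --- both arguments are equivalent and yours just makes the transfer step explicit.
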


\begin{proof} Assume the result does not hold. Then there is $\alpha>0$ and a sequence $(x_k,y_k,t_k)\in C^2\times [0,T],$ such that $\lim_{k\to\infty}\rho(x_k,y_k)=0$ and 
$$
\Pi^{(2)}_{t_k}((x_k,y_k),\Delta^c_\varepsilon)\geq \alpha.
$$
We may and do assume that $\lim_{k\to\infty}x_k=\lim_{k\to\infty}y_k=x\in C,$ and $\lim_{k\to\infty}t_k=t\in [0,T].$ Property (4) of the Lemma \ref{lem:properties_of_Pi_1} implies that $\Pi^{(2)}_{t_k}((x_k,y_k))\to \Pi^{(2)}_t((x,x))$ weakly as probability measures on $\mathcal{P}(\hat{M})^2,$ and as probability measures on $\mathcal{P}(M)^2.$  The Portmanteau theorem implies  
$$
\alpha\leq \limsup_{k\to\infty}\Pi^{(2)}_{t_k}((x_k,y_k),\Delta^c_\varepsilon)\leq \Pi^{(2)}_{t}((x,x),\Delta^c_\varepsilon)=0,
$$
since  $\Pi^{(2)}_t((x,x))$ is concentrated on $\Delta$ (property (2) of Lemma \ref{lem:properties_of_Pi_1}). Obtained contradiction proves the result.

\end{proof}

\subsection{Approximating procedure.}\label{subsec:approximating_procedure} The measure $\nu_t$ can be viewed as the distribution of a measure-valued process  $(K_{0,t}(x): x\in M).$ Let $Z$ be an at most countable dense set in $M.$ The idea of the construction is to define properly joint distribution of $(K_{0,t}(z): z\in Z)$ and recover the measure $\nu_t$ by certain limit procedure. To do this we fix a measurable mapping $\ell :\mathcal{P}(M)^\mathbb{N}\to \mathcal{P}(M)$ with the following property: for any relatively compact sequence $\mu=(\mu_n:n\in\mathbb{N}),$ $\ell(\mu)$ is a limit point of $\mu$ (see 
\cite[L. 7.1]{Raimond_Riabov} for the existence of such mapping).

Recall exhaustive sequence  $(L_j:j\geq 1)$ defined in the beginning of Section \ref{sec:def}. Lemma \ref{lem:properties_of_Pi_2} implies that there exists a sequence of positive numbers $(\varepsilon_j:j\geq 1)$ that is strictly decreasing to zero and is such that 
$$
(t,x,y)\in [0,j]\times L^2_j, \  \rho(x,y)\leq \varepsilon_j\Rightarrow \Pi^{(2)}_t((x,y),\Delta^c_{2^{-j}})\leq 2^{-j}.
$$

Let $m\mapsto z_m$ be a bijection between a subset $I$ of $\mathbb{N}$ and the set  $Z.$ For any $x\in M$ and any $j\geq 1$ we define 
\begin{equation}
\label{eq:approximating_sequence}
m^{x}_j=\inf\{m\in I:\rho(x,z_m)< \varepsilon_j/2\}.
\end{equation}

Note that $(m^x_j:j\geq 1)$ is a sequence in $I,$ and each mapping $x\mapsto m^x_j$  is measurable. 

Define mappings $i:\mathcal{P}(M)^I\to E,$ $e: E\to \mathcal{P}(M)^I,$ $\mathfrak{p}:E\to E$ as follows:  
$$
i(\mu)(x)=\ell\left(\left(\mu_{m^x_j}:j\geq 1\right)\right), \ e(K)=(K(z_m): m\in I), \ \mathfrak{p}=i\circ e.
$$

\begin{lem} \label{lem:measurable_presentation} Mappings $(x,\mu)\mapsto i(\mu)(x),$ $K\mapsto e(K),$ $(K,x)\mapsto \mathfrak{p}(K)(x)$ are measurable. Composition $e\circ i$ is the identity mapping on $\mathcal{P}(M)^I.$ Mapping $\mathfrak{p}$ satisfies the property $\mathfrak{p}\circ \mathfrak{p}=\mathfrak{p}.$ 
\end{lem}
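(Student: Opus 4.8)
The plan is to handle the two measurability assertions about $e$ and $i$ separately, and then obtain the measurability of $\mathfrak p$, the identity $e\circ i=\mathrm{id}$, and the idempotency $\mathfrak p\circ\mathfrak p=\mathfrak p$ as essentially formal consequences. The only point that requires a little argument is the identity $e\circ i=\mathrm{id}$.

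First I would check that $e$ is measurable: by the very definition of the cylindrical $\sigma$-field $\mathcal E$, each coordinate map $K\mapsto K(z_m)$, $m\in I$, is $\mathcal E/\mathcal B(\mathcal P(M))$-measurable, hence $K\mapsto e(K)=(K(z_m):m\in I)$ is measurable into $\mathcal P(M)^I$ equipped with the product $\sigma$-field. For $i$, the key observation is that for each fixed $j$ the map $(x,\mu)\mapsto\mu_{m^x_j}$ is measurable on $M\times\mathcal P(M)^I$: since $I$ is countable and $x\mapsto m^x_j$ is measurable (as noted after \eqref{eq:approximating_sequence}), one has, for $B\in\mathcal B(\mathcal P(M))$,
$$
\{(x,\mu): \mu_{m^x_j}\in B\}=\bigcup_{m\in I}\bigl(\{x: m^x_j=m\}\times\{\mu:\mu_m\in B\}\bigr),
$$
a countable union of measurable rectangles. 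Hence $(x,\mu)\mapsto(\mu_{m^x_j}:j\ge1)\in\mathcal P(M)^{\mathbb N}$ is measurable, and composing with the measurable map $\ell$ gives measurability of $(x,\mu)\mapsto i(\mu)(x)=\ell((\mu_{m^x_j}:j\ge1))$; in particular, for fixed $\mu$ the section $x\mapsto i(\mu)(x)$ is measurable, so $i(\mu)\in E$. Finally $\mathfrak p(K)(x)=i(e(K))(x)$ is the composition of the measurable maps $(K,x)\mapsto(e(K),x)$ and $(\mu,x)\mapsto i(\mu)(x)$, hence measurable.

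The heart of the proof is the identity $e\circ i=\mathrm{id}$ on $\mathcal P(M)^I$, i.e. $i(\mu)(z_m)=\mu_m$ for every $m\in I$ and every $\mu$. Here I would use that the set $\{m'\in I:m'<m\}$ is finite and that $\rho(z_m,z_{m'})>0$ for $m'\ne m$ (the map $m\mapsto z_m$ being a bijection onto $Z$); therefore there is $j_0$ with $\varepsilon_{j_0}/2<\rho(z_m,z_{m'})$ for all such $m'$, and then for every $j\ge j_0$ the infimum in the definition of $m^{z_m}_j$ is attained at $m$, so $m^{z_m}_j=m$. Consequently the sequence $(\mu_{m^{z_m}_j}:j\ge1)$ is eventually constant equal to $\mu_m$; being eventually constant it is relatively compact and has $\mu_m$ as its only limit point, so $\ell$ evaluated on it returns $\mu_m$, i.e. $i(\mu)(z_m)=\mu_m$. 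The idempotency is then immediate: $\mathfrak p\circ\mathfrak p=i\circ e\circ i\circ e=i\circ(e\circ i)\circ e=i\circ e=\mathfrak p$.

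I expect the stabilization $m^{z_m}_j=m$ for large $j$ to be the one genuinely non-routine step, since it is the place where the specific features of the construction (the strictly decreasing $\varepsilon_j\to0$ and the fact that $m\mapsto z_m$ enumerates a set without repetitions) are used; the rest is unwinding the definitions of $\mathcal E$ and of product $\sigma$-fields, together with the measurability of $x\mapsto m^x_j$ and of $\ell$, which may be quoted from the text preceding the lemma.
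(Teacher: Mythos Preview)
Your proposal is correct and follows essentially the same route as the paper: the same decomposition $\{(x,\mu):\mu_{m^x_j}\in B\}=\bigcup_{m\in I}\{x:m^x_j=m\}\times\{\mu:\mu_m\in B\}$ for the measurability of $i$, the same stabilization argument $m^{z_m}_j=m$ for large $j$ (using $\varepsilon_j\downarrow 0$ and injectivity of $m\mapsto z_m$) for $e\circ i=\mathrm{id}$, and the same deduction of $\mathfrak p\circ\mathfrak p=\mathfrak p$ from it. If anything, you are slightly more explicit than the paper in justifying why $\ell$ returns $\mu_m$ on the eventually constant sequence.
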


\begin{proof}  By definition, $i(\mu)(x)=\ell\left(\left(\mu_{m^x_j}:j\geq 1\right)\right).$ To prove measurability of $(x,\mu)\mapsto i(\mu)(x)$, it is enough to prove that mappings  $(x,\mu)\mapsto \mu_{m^x_j}\in \mathcal{P}(M)$ are measurable. This follows from the measurability of $x\mapsto m^x_j$ and the equality 
$$
\{(x,\mu):\mu_{m^x_j}\in B\}=\bigcup_{r\in I}\{(x,\mu):m^x_j=r, \ \mu_r\in B\}, \ B\in \mathcal{B}(\mathcal{P}(M)).
$$
Measurability of $e$ is obvious. Further, if $x=z_m,$ then $m^x_j=m$ as soon as $\varepsilon_j/2\leq \min_{n\in I, n<m}\rho(z_m,z_n).$  So, $i(\mu)(z_m)=\mu_m$ and
$e(i(\mu))_m=i(\mu)(z_m)=\mu_m.$
In particular, $\mathfrak{p}\circ \mathfrak{p}=\mathfrak{p}.$ 
Equality $\mathfrak{p}(K)(x)=i(e(K))(x)$ proves  measurability of the mapping $(K,x)\mapsto \mathfrak{p}(K)(x).$

\end{proof}

For $n\geq 1$ define mappings $\Phi_n:(\mathcal{P}(M)^I)^n\to E,$ $\Psi_n : M\times (\mathcal{P}(M)^I)^n\to \mathcal{P}(M)$ by formulas 
$$
\Phi_n(\mu^1,\ldots,\mu^n)(x)=i(\mu^1)\ldots i(\mu^n)(x)=\Psi_n(x,\mu^1,\ldots,\mu^n).
$$

\begin{lem} \label{lem:1.5.2} For all $n\geq 1$ mappings $\Phi_n$ and $\Psi_n$ are well-defined and measurable.
\end{lem}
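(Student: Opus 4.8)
The plan is to reduce the assertion about $\Phi_n$ and $\Psi_n$ to the already established properties of $i$ collected in Lemma \ref{lem:measurable_presentation}, together with the definition of the cylindrical $\sigma$-field $\mathcal{E}$ on $E$. First I would note that $\Phi_n$ and $\Psi_n$ carry the same information: once one shows that $\Psi_n:M\times(\mathcal{P}(M)^I)^n\to\mathcal{P}(M)$, $(x,\mu^1,\dots,\mu^n)\mapsto i(\mu^1)\cdots i(\mu^n)(x)$, is measurable, it follows by the definition of $\mathcal{E}$ (the smallest $\sigma$-field making every evaluation $K\mapsto K(x)$ measurable) that $(\mu^1,\dots,\mu^n)\mapsto\Phi_n(\mu^1,\dots,\mu^n)\in(E,\mathcal{E})$ is measurable; conversely well-definedness of $\Phi_n$ amounts to the statement that for each fixed $(\mu^1,\dots,\mu^n)$ the map $x\mapsto i(\mu^1)\cdots i(\mu^n)(x)$ is a genuine kernel, i.e.\ $M$-measurable, which is again a consequence of measurability of $\Psi_n$ in the pair $(x,\mu^1,\dots,\mu^n)$ by sectioning.

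So the whole lemma comes down to measurability of $\Psi_n$, which I would prove by induction on $n$. For $n=1$, $\Psi_1(x,\mu^1)=i(\mu^1)(x)$ is exactly the map $(x,\mu)\mapsto i(\mu)(x)$ shown measurable in Lemma \ref{lem:measurable_presentation}. For the inductive step, write $K=i(\mu^n)$ and use the recursion $i(\mu^1)\cdots i(\mu^n)(x)=\bigl(i(\mu^1)\cdots i(\mu^{n-1})\bigr)\,i(\mu^n)(x)=\int_M i(\mu^n)(y)\,\bigl(i(\mu^1)\cdots i(\mu^{n-1})\bigr)(x,dy)$. Thus $\Psi_n(x,\mu^1,\dots,\mu^n)=\int_M i(\mu^n)(y)\,\Psi_{n-1}(x,\mu^1,\dots,\mu^{n-1})(dy)$; this is the image, under the (jointly measurable) kernel-composition-type operation, of the pair consisting of the measure-valued map $\Psi_{n-1}$ (measurable by the induction hypothesis) and the kernel $y\mapsto i(\mu^n)(y)$ (measurable in $(y,\mu^n)$ by Lemma \ref{lem:measurable_presentation}). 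To make this rigorous I would test against a convergence-determining countable family: for $f\in C_0(M)$ the map $(x,\mu^1,\dots,\mu^n)\mapsto\int_M f\,d\Psi_n=\int_M \bigl(\int_M f(z)\,i(\mu^n)(y,dz)\bigr)\,\Psi_{n-1}(x,\mu^1,\dots,\mu^{n-1})(dy)$ is measurable because $(y,\mu^n)\mapsto i(\mu^n)f(y)=\int f(z)\,i(\mu^n)(y,dz)$ is measurable and bounded, hence so is its integral against the measurable family of probability measures $\Psi_{n-1}(x,\mu^1,\dots,\mu^{n-1})$ (this last step is the standard fact that if $(t,y)\mapsto h(t,y)$ is bounded measurable and $t\mapsto\eta_t$ is measurable into $\mathcal{P}(M)$ then $t\mapsto\int h(t,y)\,\eta_t(dy)$ is measurable, proved by a monotone-class argument starting from $h$ of product form). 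Since $\mathcal{B}(\mathcal{P}(M))$ is generated by the maps $\mu\mapsto\int f\,d\mu$, $f\in C_0(M)$, this yields measurability of $\Psi_n$ and completes the induction.

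The main obstacle — really the only nontrivial point — is the joint measurability of the bilinear composition operation in the inductive step: one must be careful because, as the authors emphasize in the introduction, convolution of kernels is in general \emph{not} a measurable operation on $E\times E$. What rescues us here is that we are not composing arbitrary kernels but the specific images $i(\mu)$, for which Lemma \ref{lem:measurable_presentation} already provides joint measurability of $(x,\mu)\mapsto i(\mu)(x)$; so the second argument of the composition is a jointly measurable kernel rather than a bare point of $(E,\mathcal{E})$, and the monotone-class argument above goes through. I would therefore take care to phrase the inductive step entirely in terms of $\Psi$ (measure-valued, jointly measurable families) rather than in terms of the abstract composition $E\times E\to E$, so as not to run into the non-measurability obstruction. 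Everything else — passing between $\Phi_n$ and $\Psi_n$, and deducing well-definedness as a kernel from joint measurability by sectioning — is routine.
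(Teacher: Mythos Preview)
Your proposal is correct and follows essentially the same route as the paper: induction on $n$, with the inductive step reducing to the joint measurability of $(y,\mu)\mapsto i(\mu)(y)$ from Lemma~\ref{lem:measurable_presentation}. The paper's proof is terser, phrasing the step as measurability of $(\mu,K)\mapsto \mu\,\mathfrak{p}(K)$ together with the identity $i(\mu^n)=\mathfrak{p}(i(\mu^n))$, but this is exactly the mechanism you spell out with the monotone-class argument; your explicit discussion of why the non-measurability of general kernel composition is not an obstacle here is a useful elaboration of what the paper leaves implicit.
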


\begin{proof} We note that the mapping $(\mu,K)\mapsto \mu\mathfrak{p}(K)$ is measurable. By induction, it follows that 
$$
\Psi_n(x,\mu^1,\ldots,\mu^n)=\Psi_{n-1}(x,\mu^2,\ldots,\mu^n)i(\mu^n)=\Psi_{n-1}(x,\mu^2,\ldots,\mu^n)\mathfrak{p}(i(\mu^n))
$$
is measurable. 

\end{proof}

%
%
%
%

\subsection{Probability measures $\Pi_t$}\label{subsec:Pi_t} By Kolmogorov's theorem, for every $t\geq 0$ there exists a unique probability measure $\Pi_t$ on $\mathcal{P}(M)^I,$ such that for any finite set $J\subset I$ and $B\in \mathcal{B}(\mathcal{P}(M)^{|J|})$
$$
\Pi_t\{\mu: \mu|_J\in B\}=\Pi^{(|J|)}_t((z_m)_{m\in J},B).
$$

%
%
%

\begin{prp}\label{prop:fdd}
For any $(i_1,\ldots,i_n)\in I^n$ and $B\in \mathcal{B}(\mathcal{P}(M)^{n})$
$$
\Pi_t\{\mu: (\mu_{i_1},\ldots,\mu_{i_n})\in B\}=\Pi^{(n)}_t((z_{i_1},\ldots,z_{i_n}),B).
$$

\end{prp}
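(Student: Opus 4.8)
The plan is to reduce the assertion to the defining property of $\Pi_t$ from Section \ref{subsec:Pi_t}, which already gives the claim when the indices are pairwise distinct and increasing, and then to remove both the ordering and the distinctness restrictions using the ``diagonal'' concentration of the measures $\Pi^{(n)}_t$ contained in Lemma \ref{lem:properties_of_Pi_1}.

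Fix an arbitrary $(i_1,\dots,i_n)\in I^n$, let $m_1<\dots<m_k$ be the distinct values occurring among $i_1,\dots,i_n$, put $J=\{m_1,\dots,m_k\}$, let $\tau:\{1,\dots,n\}\to\{1,\dots,k\}$ be the surjection determined by $i_l=m_{\tau(l)}$, and let $\sigma(j)=\min\{a:i_a=m_j\}$, which is an injection $\{1,\dots,k\}\to\{1,\dots,n\}$ satisfying $\pi_\sigma(z_{i_1},\dots,z_{i_n})=(z_{m_1},\dots,z_{m_k})$. Define the (continuous) coordinate rearrangement $q_\tau:\mathcal{P}(M)^k\to\mathcal{P}(M)^n$ by $q_\tau(\nu)_l=\nu_{\tau(l)}$. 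Since $\mu_{i_l}=\mu_{m_{\tau(l)}}$ for every $l$, the event $\{\mu:(\mu_{i_1},\dots,\mu_{i_n})\in B\}$ equals $\{\mu:(\mu_{m_1},\dots,\mu_{m_k})\in q_\tau^{-1}(B)\}$, so the definition of $\Pi_t$ applied to the increasing finite set $J$ gives
$$
\Pi_t\{\mu:(\mu_{i_1},\dots,\mu_{i_n})\in B\}=\Pi^{(k)}_t\big((z_{m_1},\dots,z_{m_k}),q_\tau^{-1}(B)\big).
$$
It then remains to prove the ``diagonal embedding'' identity $\Pi^{(k)}_t((z_{m_1},\dots,z_{m_k}))\circ q_\tau^{-1}=\Pi^{(n)}_t((z_{i_1},\dots,z_{i_n}))$.

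For this I would first note that whenever $i_a=i_b$ the base points coincide, so projecting $\Pi^{(n)}_t((z_{i_1},\dots,z_{i_n}))$ onto its $(a,b)$-coordinates by property (1) of Lemma \ref{lem:properties_of_Pi_1} produces $\Pi^{(2)}_t((z_{i_a},z_{i_a}))$, which by property (2) of Lemma \ref{lem:properties_of_Pi_1} is carried by the diagonal $\Delta$; intersecting over the finitely many such pairs shows that $\Pi^{(n)}_t((z_{i_1},\dots,z_{i_n}))$ is concentrated on the closed set $D=\{\mu\in\mathcal{P}(M)^n:\mu_a=\mu_b\mbox{ whenever }i_a=i_b\}$. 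A direct computation shows that on $D$ the map $q_\tau$ and the restriction of $\pi_\sigma$ are mutually inverse bijections onto $\mathcal{P}(M)^k$: indeed $q_\tau\circ\pi_\sigma$ restricts to the identity on $D$, while $\pi_\sigma\circ q_\tau$ is the identity on $\mathcal{P}(M)^k$. Property (1) of Lemma \ref{lem:properties_of_Pi_1} for the injection $\sigma$ gives $\Pi^{(n)}_t((z_{i_1},\dots,z_{i_n}))\circ\pi_\sigma^{-1}=\Pi^{(k)}_t((z_{m_1},\dots,z_{m_k}))$; pushing this forward once more by $q_\tau$ and using $\Pi^{(n)}_t((z_{i_1},\dots,z_{i_n}))(D)=1$ together with $(q_\tau\circ\pi_\sigma)^{-1}(B)\cap D=B\cap D$ yields the required identity, and combining it with the previous display proves the proposition. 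The pairwise-distinct case is recovered as the special case $k=n$, $D=\mathcal{P}(M)^n$, with $\tau$ and $\sigma$ mutually inverse permutations.

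I expect the only genuinely non-formal step to be the concentration on $D$. The abstract consistency of property (1) of Lemma \ref{lem:properties_of_Pi_1} transports marginals only under projections, so the fact that repeated base points force the corresponding random measures to coincide has to be fed in separately, and this is exactly where the coalescing-type identity $\Pi^{(2)}_t((x,x),\Delta)=1$ is used. Everything else is bookkeeping with the maps $\tau$, $\sigma$, $q_\tau$ and the routine measurability of $D$, $q_\tau$ and $\pi_\sigma$.
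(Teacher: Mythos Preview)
Your proof is correct and follows essentially the same approach as the paper. Both arguments reduce to the defining property of $\Pi_t$ on distinct ordered indices via the diagonal-embedding map (your $q_\tau$ is the paper's $h$, your $\sigma$ is the paper's $\alpha$), and both establish the identity $\Pi^{(k)}_t((z_{m_1},\dots,z_{m_k}))\circ q_\tau^{-1}=\Pi^{(n)}_t((z_{i_1},\dots,z_{i_n}))$ by combining property~(1) of Lemma~\ref{lem:properties_of_Pi_1} with the diagonal concentration from property~(2); the paper carries this out on product sets $B=\prod_j B_j$ while you phrase it via the concentration on $D$ and the identity $q_\tau\circ\pi_\sigma|_D=\mathrm{id}_D$, which is a slightly cleaner packaging of the same computation.
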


\begin{rem}
\label{rem:coincidence_of_indices} Note that some  indices among  $i_1,\ldots,i_n$ may coincide.
\end{rem}

\begin{proof} The proof follows from statements (1) and (2) of Lemma \ref{lem:properties_of_Pi_1}.

 Let $\bigcup^n_{j=1}\{i_j\}=\{k_1,\ldots,k_p\}\subset I$ with $k_1<\ldots<k_p.$ Denote $J_l=\{j\in \{1,\ldots,n\}:i_j=k_l\},$ $1\leq l\leq p.$ Then $J_1,\ldots,J_p$ is a partition of $\{1,\ldots,n\}$ into non-empty subsets. Let $\sigma\in S_{p,k_p}$ be the injection $\sigma(l)=k_l,$ $1\leq l\leq p.$

Consider the mapping $h:\mathcal{P}(M)^p\to \mathcal{P}(M)^n$ given by 
$$
h(\mu)_j=\mu_l, \ j\in J_l, 1\leq l\leq p.
$$

Take $B=\prod^n_{j=1}B_j,$ where $B_j\in \mathcal{B}(\mathcal{P}(M)),$ $1\leq j\leq n.$  

The equality $h(\mu_{k_1},\ldots,\mu_{k_p})=(\mu_{i_1},\ldots,\mu_{i_n})$ implies 

$$
\begin{aligned}
\Pi_t\{\mu: (\mu_{i_1},\ldots,\mu_{i_n})\in B\}&=\Pi_t\{\mu: (\mu_{k_1},\ldots,\mu_{k_p})\in h^{-1}(B)\}\\
&=\Pi^{(p)}_t\left((z_{k_1},\ldots,z_{k_p}), h^{-1}(B)\right).
\end{aligned}
$$

For every $l\in \{1,\ldots, p\}$ choose  $j(l)\in J_l,$ and set $C_l=\cap_{j\in J_l}B_j.$  Consider injections  $\alpha\in S_{p,n},$ $\alpha(l)=j(l),$ $1\leq l\leq p,$ and $\beta_{j_1,j_2;l}\in S_{2,n},$ $\beta_{j_1,j_2;l}(i)=j_i,$ $i=1,2.$ Here $j_1,j_2\in J_l,$ $j_1\ne j_2.$ We note that
$$
\begin{aligned}
\Pi^{(n)}_t((z_{i_1},\ldots,z_{i_n}),\beta^{-1}_{j_1,j_2;l}(\mathcal{P}(M)^2\setminus \Delta))&=\Pi^{(2)}((z_{i_{j_1}},z_{i_{j_2}}),\mathcal{P}(M)^2\setminus \Delta)\\
&=\Pi^{(2)}((z_{k_l},z_{k_l}),\mathcal{P}(M)^2\setminus \Delta)=0.
\end{aligned}
$$
So,
$$
\begin{aligned}
\Pi^{(n)}_t&((z_{i_1},\ldots,z_{i_n}),B)=\Pi^{(n)}_t\left((z_{i_1},\ldots,z_{i_n}),B\cap \left(\bigcap^p_{l=1}\bigcap_{\substack{(j_1,j_2)\in J^2_l \\ j_1\ne j_2}}\beta^{-1}_{j_1,j_2;l}(\Delta)\right)\right)\\
&=\Pi^{(n)}_t\left((z_{i_1},\ldots,z_{i_n}),\alpha^{-1}\left(\prod^p_{l=1} C_l\right)\cap \left(\bigcap^p_{l=1}\bigcap_{\substack{(j_1,j_2)\in J^2_l\\ j_1\ne j_2}}\beta^{-1}_{j_1,j_2;l}(\Delta)\right)\right)\\
&=\Pi^{(n)}_t\left((z_{i_1},\ldots,z_{i_n}),\alpha^{-1}\left(\prod^p_{l=1} C_l\right)\right)=\Pi^{(p)}_t\left((z_{k_1},\ldots,z_{k_p}),\prod^p_{l=1} C_l\right)\\
&=\Pi^{(p)}_t\left((z_{k_1},\ldots,z_{k_p}),h^{-1}(B)\right)=\Pi_t\{\mu: (\mu_{i_1},\ldots,\mu_{i_n})\in B\}.
\end{aligned}
$$

\end{proof}

The measure $\Pi_t$ must be understood as the distribution of $(K_{0,t}(z): z\in Z).$ We will recover the distribution  $\nu_t$ approximating the distribution of $K_{0,t}(x)$ by distributions of $(K_{0,t}(z_{m^x_j}): j\geq 1),$ where $m^x_j$ was defined in \eqref{eq:approximating_sequence}. To do this we need several estimates on the speed of approximation.

\begin{lem}
\label{lem:aux1} Let $C\subset M$ be compact and $t\geq 0.$ There exists $j_0\geq 1$ such that for all $j\geq j_0$ and all  $x\in C$
$$
\Pi_t\{\mu: d(\mu_{m^x_j},\mu_{m^x_{j+1}})\geq 2^{-j}\}\leq 2^{-j}.
$$
\end{lem}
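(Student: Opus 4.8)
The plan is to use Proposition~\ref{prop:fdd} to translate the claim into a statement about the two-point measures $\Pi^{(2)}_t$, and then to quote the very property that was used to select the sequence $(\varepsilon_j:j\ge 1)$ just before \eqref{eq:approximating_sequence}. Since $m^x_j,m^x_{j+1}\in I$ and the set $\Delta^c_{2^{-j}}=\{(\mu_1,\mu_2)\in\mathcal{P}(M)^2:d(\mu_1,\mu_2)\ge 2^{-j}\}$ is closed, hence Borel, Proposition~\ref{prop:fdd} (which allows repeated indices, see Remark~\ref{rem:coincidence_of_indices}) gives, for every $x\in M$ and every $j\ge1$,
$$
\Pi_t\{\mu:d(\mu_{m^x_j},\mu_{m^x_{j+1}})\ge 2^{-j}\}=\Pi^{(2)}_t\bigl((z_{m^x_j},z_{m^x_{j+1}}),\Delta^c_{2^{-j}}\bigr).
$$
It therefore suffices to bound the right-hand side by $2^{-j}$ uniformly in $x\in C$, for all sufficiently large $j$.

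The next step is to localize the points $z_{m^x_j}$. Since $C$ is compact and the interiors of the sets $L_{j+1}$ form an increasing open cover of $M$, there is an index $k$ with $C\subset L_k$. As $L_k$ is a compact subset of the interior of $L_{k+1}$, the distance $\delta$ from $L_k$ to the complement of that interior is strictly positive, and $x\in L_k$ together with $\rho(x,y)<\delta$ force $y\in L_{k+1}$. By the definition \eqref{eq:approximating_sequence} of $m^x_j$ we have $\rho(x,z_{m^x_j})<\varepsilon_j/2$ and $\rho(x,z_{m^x_{j+1}})<\varepsilon_{j+1}/2<\varepsilon_j/2$. I would then fix $j_0\ge k+1$ with $j_0\ge t$ and $\varepsilon_{j_0}<2\delta$ (possible because $\varepsilon_j$ strictly decreases to $0$). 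For every $j\ge j_0$ and every $x\in C$, both $z_{m^x_j}$ and $z_{m^x_{j+1}}$ then lie in $L_{k+1}\subset L_j$, one has $\rho(z_{m^x_j},z_{m^x_{j+1}})<\varepsilon_j/2+\varepsilon_{j+1}/2<\varepsilon_j$, and $t\in[0,j]$.

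Finally, the sequence $(\varepsilon_j:j\ge1)$ was chosen, via Lemma~\ref{lem:properties_of_Pi_2}, so that $(t,u,v)\in[0,j]\times L_j^2$ with $\rho(u,v)\le\varepsilon_j$ implies $\Pi^{(2)}_t((u,v),\Delta^c_{2^{-j}})\le 2^{-j}$; applying this with $u=z_{m^x_j}$ and $v=z_{m^x_{j+1}}$, and combining with the displayed identity, yields the assertion. The degenerate case $m^x_j=m^x_{j+1}$ needs no separate discussion: it is covered by the same inequality, and in any event $\Pi^{(2)}_t$ is carried by the diagonal when its two entries coincide, by part~(2) of Lemma~\ref{lem:properties_of_Pi_1}. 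I do not expect a genuine obstacle here; the only thing to watch is uniformity over $x\in C$, which is secured by the single, $x$-independent choices of $k$, $\delta$ and $j_0$ coming from the compactness of $C$ and from $\varepsilon_j\to0$, the rest being the triangle inequality.
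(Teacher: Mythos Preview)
Your proposal is correct and follows essentially the same approach as the paper: translate the probability via Proposition~\ref{prop:fdd} into $\Pi^{(2)}_t((z_{m^x_j},z_{m^x_{j+1}}),\Delta^c_{2^{-j}})$, localize the approximating points $z_{m^x_j},z_{m^x_{j+1}}$ inside some $L_j$ using compactness of $C$ and $\varepsilon_j\to0$, check $\rho(z_{m^x_j},z_{m^x_{j+1}})<\varepsilon_j$ by the triangle inequality, and invoke the defining property of the sequence $(\varepsilon_j)$. The paper's localization is slightly more streamlined---it uses that the closed $1$-neighborhood of $C$ is relatively compact (since bounded sets in $(M,\rho)$ are) and hence lies in some $L_l$, then takes $j_0\ge t\vee l$ with $\varepsilon_{j_0}<1$---whereas you route through the distance from $L_k$ to the complement of the interior of $L_{k+1}$; but this is a cosmetic difference only.
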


\begin{proof} There is $l\geq 1$ such that $\{u\in M: \rho(u,C)\leq 1\}\subset L_l.$ Take $j_0\geq t\vee l$ such that $\varepsilon_{j_0}<1.$ If $x\in C$ and $j\geq j_0,$ then 
$$
\rho(z_{m^x_j},x)<\frac{\varepsilon_j}{2}<1, \  \rho(z_{m^x_{j+1}},x)<\frac{\varepsilon_{j+1}}{2}<1.
$$
So, $(t,z_{m^x_j},z_{m^x_{j+1}})\in [0,j]\times L^2_j.$ Since $\rho(z_{m^x_j},z_{m^x_{j+1}})<\varepsilon_j,$ we deduce that 
$$
\Pi_t\{\mu: d(\mu_{m^x_j},\mu_{m^x_{j+1}})\geq 2^{-j}\}=\Pi^{(2)}_t((z_{m^x_j},z_{m^x_{j+1}}),\Delta^c_{2^{-j}})\leq 2^{-j}.
$$
\end{proof}


\begin{lem}\label{lem:1.5.7} For all $x\in M$ and $\Pi_t$-a.a $\mu\in \mathcal{P}(M)^I,$ 
$$
\lim_{j\to\infty}\mu_{m^x_j}=i(\mu)(x).
$$


\end{lem}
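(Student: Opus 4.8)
The plan is to show that, for fixed $x\in M$, the sequence $(\mu_{m^x_j}:j\geq 1)$ is $\Pi_t$-almost surely Cauchy with respect to the complete metric $d$ on $\mathcal{P}(M)$, and then to identify its limit with $i(\mu)(x)=\ell\big((\mu_{m^x_j}:j\geq 1)\big)$ by invoking the defining property of $\ell$. First I would fix $x\in M$ and apply Lemma~\ref{lem:aux1} with the compact set $C=\{x\}$: this yields $j_0\geq 1$ such that for all $j\geq j_0$
$$
\Pi_t\{\mu: d(\mu_{m^x_j},\mu_{m^x_{j+1}})\geq 2^{-j}\}\leq 2^{-j}.
$$
Since $\sum_{j\geq j_0}2^{-j}<\infty$, the Borel--Cantelli lemma provides a $\Pi_t$-null set $N$ (depending on $x$) such that for every $\mu\notin N$ there is $j_1(\mu)\geq j_0$ with $d(\mu_{m^x_j},\mu_{m^x_{j+1}})<2^{-j}$ for all $j\geq j_1(\mu)$.

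Next, for such $\mu$ and for $k>j\geq j_1(\mu)$ one has $d(\mu_{m^x_j},\mu_{m^x_k})\leq\sum_{i=j}^{k-1}2^{-i}\leq 2^{-j+1}$, so the sequence $(\mu_{m^x_j}:j\geq 1)$ is Cauchy in $(\mathcal{P}(M),d)$; by completeness it converges to some $\mu_\infty\in\mathcal{P}(M)$. In particular the sequence is relatively compact and its unique limit point is $\mu_\infty$. By the property of the selection map $\ell$ recalled in Section~\ref{subsec:approximating_procedure}, $\ell\big((\mu_{m^x_j}:j\geq 1)\big)$ is a limit point of $(\mu_{m^x_j}:j\geq 1)$, hence equals $\mu_\infty$. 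Therefore
$$
i(\mu)(x)=\ell\big((\mu_{m^x_j}:j\geq 1)\big)=\mu_\infty=\lim_{j\to\infty}\mu_{m^x_j},
$$
which is the assertion, the exceptional set $N$ being allowed to depend on $x$.

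I do not expect a serious obstacle here: the argument is essentially a Borel--Cantelli / Cauchy-sequence argument combined with the defining property of $\ell$. The only mild point worth stating carefully is that $\ell$ is only guaranteed to return a genuine limit point on relatively compact sequences, which is why one first establishes almost sure convergence (hence relative compactness) before identifying $i(\mu)(x)$ with the limit.
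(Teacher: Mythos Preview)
Your proposal is correct and follows essentially the same approach as the paper: apply Lemma~\ref{lem:aux1} with $C=\{x\}$, use Borel--Cantelli to get almost sure summability of $d(\mu_{m^x_j},\mu_{m^x_{j+1}})$, conclude that the sequence is Cauchy in the complete space $(\mathcal{P}(M),d)$, and identify the limit with $i(\mu)(x)$ via the defining property of $\ell$. The paper's proof is more terse but the argument is identical; your added remark that one needs relative compactness before invoking $\ell$ is a welcome clarification.
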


\begin{proof} 
By the Lemma \ref{lem:aux1}, for all $j\geq j_0$ 
$$
\Pi_t\{\mu: d(\mu_{m^x_j},\mu_{m^x_{j+1}})\geq 2^{-j}\}\leq 2^{-j}.
$$
By the Borel-Cantelli lemma, for $\Pi_t$-a.a $\mu\in \mathcal{P}(M)^I,$ $\sum^\infty_{j=1}d(\mu_{m^x_j},\mu_{m^x_{j+1}})<\infty.$  So, for $\Pi_t$-a.a. $\mu\in \mathcal{P}(M)^I$ the limit $\lim_{j\to\infty}\mu_{m^x_j}$   exists and  necessarily coincides with $i(\mu)(x).$

\end{proof}

\subsection{Feller convolution semigroup $(\nu_t:t\geq 0)$.} \label{subsec:FCS_nu_t} Define $\nu_t=\Pi_t\circ i^{-1}.$ $\nu_t$ is a regular probability measure on $(E,\mathcal{E})$ with measurable presentation $\mathfrak{p}.$ Indeed, the mapping
$$ 
\mathfrak{p}\left(K\right)(x)=i(e(K))(x)
$$
is  $\mathcal{E}\otimes \mathcal{B}(M)/\mathcal{B}(M)$-measurable (Lemma \ref{lem:measurable_presentation}). Further,  for every $x\in M$ 
$$
\begin{aligned}
\nu_t\{K: \mathfrak{p}(K)(x)  =K(x)\}&=\nu_t\{K: i(e(K))(x)=K(x)\}\\
&=\Pi_t\{\mu: i(e(i(\mu))(x)=i(\mu)(x)\}\\
&=\Pi_t\{\mu: i(\mu)(x)=i(\mu)(x)\}=1,
\end{aligned}
$$
since $e\circ i$ is the identity mapping on $\mathcal{P}(M)^I$ (Lemma \ref{lem:measurable_presentation}).

Consider  $x\in M^N,$ $t\geq 0,$ and $f\in C_0(M^N).$  Using Proposition \ref{prop:fdd}, Lemma \ref{lem:1.5.7},  dominated convergence theorem and the Feller property of $(\mathsf{P}^{(n)}: n\geq 1)$, we obtain
\begin{equation}
\label{eq:fdd_verification}
\begin{aligned}
\int_{E}&\left(\int_{M^N}f(y)\left(\otimes^N_{r=1}K(x_{r})\right)(dy)\right)\nu_t(dK)\\
&=\int_{\mathcal{P}(M)^I}\left(\int_{M^N}f(y)\left(\otimes^N_{r=1}i(\mu)(x_{r})\right)(dy)\right)\Pi_t(d\mu)\\
&=\lim_{j\to\infty}\int_{\mathcal{P}(M)^I}\left(\int_{M^N}f(y)\left(\otimes^N_{r=1}\mu_{m^{x_{r}}_j}\right)(dy)\right)\Pi_t(d\mu)\\
&=\lim_{j\to\infty}\int_{\mathcal{P}(M)^N}\left(\int_{M^N}f(y)\left(\otimes^N_{r=1}\mu_r\right)(dy)\right)\Pi^{(N)}_t((z_{m^{x_{1}}_j},\ldots,z_{m^{x_{N}}_j}),d\mu)\\
&=\lim_{j\to\infty} \mathsf{P}^{(N)}_tf(z_{m^{x_{1}}_j},\ldots,z_{m^{x_{N}}_j})=\mathsf{P}^{(N)}_tf(x_{1},\ldots,x_{N}).
\end{aligned}
\end{equation}

Now we can verify that $(\nu_t : t\geq 0)$ is the needed  Feller convolution semigroup in the space of kernels on $M$. Let $t,s\geq 0.$ From the Lemma \ref{CSFTF_determines_FCS} it is enough to verify that integrals of functions 
$$
K\mapsto \int_{M^N}f(y)\left(K(x_{i_1})\otimes \ldots\otimes K(x_{i_N})\right)(dy),
$$
where  $x\in M^n,$ $f\in C_0(M^N),$ $(i_1,\ldots,i_N)\in \{1,\ldots,n\}^N,$ coincide for distributions $\nu_t*\nu_s$ and $\nu_{t+s}.$ Using Fubini's theorem, we have
$$
\begin{aligned}
\int_E &\left(\int_{M^N}f(y)\left(\otimes^N_{r=1} K(x_{i_r})\right)(dy)\right)(\nu_t*\nu_s)(dK)\\
&=\int_E\int_E \left(\int_{M^N}f(y)\left(\otimes^N_{r=1}K_1\mathfrak{p}(K_2)(x_{i_r})\right)(dy)\right)\nu_t(dK_1)\nu_s(dK_2)\\
&=\int_E\int_E \left(\int_{M^N}\int_{M^N}f(z)\left(\otimes^N_{r=1}\mathfrak{p}(K_2)(y_r)\right)(dz)\left(\otimes^N_{r=1}K_1(x_{i_r})\right)(dy)\right)\nu_t(dK_1)\nu_s(dK_2)\\
&=\int_E\int_{M^NN}P^{(N)}_sf(y)\left(\otimes^N_{r=1}K_1(x_{i_r})\right)(dy)\nu_t(dK_1)=P^{(N)}_{t}P^{(N)}_sf(x_{i_1},\ldots,x_{i_N})\\
&=P^{(N)}_{t+s}f(x_{i_1},\ldots,x_{i_N})=\int_E \left(\int_{M^N}f(y)\left(\otimes^N_{r=1}K(x_{i_r})\right)(dy)\right)\nu_{t+s}(dK).
\end{aligned}
$$
The equality $\nu_t*\nu_s=\nu_{t+s}$ is proved.

We  verify  conditions (2) and (3) of the Definition \ref{def:FCS_kernel}. Let $f\in C_0(M)$ and $\varepsilon>0.$  Then
$$
\begin{aligned}
\sup_{x\in M}\nu_t&\left\{K: |Kf(x)-f(x)|\geq \varepsilon\right\}\leq \varepsilon^{-2}\sup_{x\in M}\int_E \left(Kf(x)-f(x)\right)^2\nu_t (dK)\\
&=\varepsilon^{-2}\sup_{x\in M}\left(\mathsf{P}^{(2)}_tf^{\otimes 2}(x,x)-2f(x)\mathsf{P}^{(1)}f(x)+f^2(x)\right)\to 0,
\end{aligned}
$$
as $t\to 0+.$  Further, 
$$
\begin{aligned}
\nu_t&\left\{K: |Kf(y)-Kf(x)|\geq \varepsilon\right\}\leq \varepsilon^{-2}\int_E \left(Kf(y)-Kf(x)\right)^2\nu_t(dK)\\
&=\varepsilon^{-2}\left(\mathsf{P}^{(2)}_tf^{\otimes 2}(y,y)-2\mathsf{P}^{(2)}_tf^{\otimes 2}(x,y)+\mathsf{P}^{(2)}_tf^{\otimes 2}(x,x)\right)\to 0, y\to x.
\end{aligned}
$$
Finally,
$$
\begin{aligned}
\nu_t&\left\{K: |Kf(y)|\geq \varepsilon\right\}\leq \varepsilon^{-2}\int_E \left(Kf(y)\right)^2\nu_t(dK)=\varepsilon^{-2}\mathsf{P}^{(2)}_tf^{\otimes 2}(y,y)\to 0, y\to \infty.\\
\end{aligned}
$$

%

Equation \eqref{eq:fdd_verification} implies that the consistent sequence of Feller transition functions that corresponds to $(\nu_t:t\geq 0)$ is exactly $(\mathsf{P}^{(n)}:n\geq 1).$ This finishes the proof of Theorem \ref{thm:FCS_kernels}.

In the next section we will need the following result. 

\begin{lem}\label{lem:1.5.14} For all $t_1,\ldots,t_n\geq 0$
$$
(\Pi_{t_1}\otimes \ldots\otimes \Pi_{t_n})\circ \Phi^{-1}_n=\nu_{t_1+\ldots+t_n}.
$$

\end{lem}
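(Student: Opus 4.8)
The plan is to argue by induction on $n$, stripping off one convolution factor at a time and recognising $\Phi_n$ as exactly the map that defines the convolution $\nu_{t_1}*\nu_{t_2+\dots+t_n}$. The case $n=1$ is immediate, since $\Phi_1=i$ and $\nu_{t_1}=\Pi_{t_1}\circ i^{-1}$ by the definition of $\nu_{t_1}$ in Section \ref{subsec:FCS_nu_t}. For the inductive step I assume the identity for $n-1$ factors; in particular $(\Pi_{t_2}\otimes\dots\otimes\Pi_{t_n})\circ\Phi_{n-1}^{-1}=\nu_{t_2+\dots+t_n}$. A routine application of Fubini's theorem on product sets shows that the convolution of kernels is associative, so $\Phi_n(\mu^1,\dots,\mu^n)=i(\mu^1)\,\Phi_{n-1}(\mu^2,\dots,\mu^n)$ for every $(\mu^1,\dots,\mu^n)$.

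Next I would introduce $G(\mu^1,\dots,\mu^n)=i(\mu^1)\,\mathfrak p\big(\Phi_{n-1}(\mu^2,\dots,\mu^n)\big)$. This map is $\mathcal E$-measurable, because $(K_1,K_2)\mapsto K_1\mathfrak p(K_2)$ is $\mathcal E^{\otimes 2}/\mathcal E$-measurable (recalled after Definition \ref{def:regular_measure_kernels}) and $\Phi_{n-1}$ is measurable (Lemma \ref{lem:1.5.2}). Combining $\nu_{t_1}=\Pi_{t_1}\circ i^{-1}$ with the inductive hypothesis $\nu_{t_2+\dots+t_n}=(\Pi_{t_2}\otimes\dots\otimes\Pi_{t_n})\circ\Phi_{n-1}^{-1}$ and pushing forward the product measure, one gets $(\Pi_{t_1}\otimes\dots\otimes\Pi_{t_n})\circ G^{-1}=\nu_{t_1}*\nu_{t_2+\dots+t_n}$, which equals $\nu_{t_1+\dots+t_n}$ by the convolution semigroup property established in Section \ref{subsec:FCS_nu_t}. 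It therefore remains to show that $G$ and $\Phi_n$ transport $\Pi_{t_1}\otimes\dots\otimes\Pi_{t_n}$ to the same probability measure on $(E,\mathcal E)$.

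Since $\mathcal E$ is generated by the evaluations $K\mapsto K(x)$, this reduces to checking, for each fixed $x\in M$, that $G(\mu)(x)=\Phi_n(\mu)(x)=i(\mu^1)\Phi_{n-1}(\mu^2,\dots,\mu^n)(x)$ for $(\Pi_{t_1}\otimes\dots\otimes\Pi_{t_n})$-almost every $\mu$; intersecting over finitely many $x$'s then gives equality of all finite-dimensional distributions, hence of the two laws. Fix $x$ and $\mu^1$, and abbreviate $\mu^{\ge 2}=(\mu^2,\dots,\mu^n)$. By Definition \ref{def:regular_measure_kernels}, for every fixed $y\in M$ one has $\mathfrak p(K)(y)=K(y)$ for $\nu_{t_2+\dots+t_n}$-a.a.\ $K$; by the inductive hypothesis the law of $\Phi_{n-1}(\mu^{\ge 2})$ under $\Pi_{t_2}\otimes\dots\otimes\Pi_{t_n}$ is $\nu_{t_2+\dots+t_n}$. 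Applying Tonelli's theorem to the jointly measurable indicator of $\{\mathfrak p(\Phi_{n-1}(\mu^{\ge 2}))(y)\neq\Phi_{n-1}(\mu^{\ge 2})(y)\}$ with respect to $i(\mu^1)(x,dy)\otimes(\Pi_{t_2}\otimes\dots\otimes\Pi_{t_n})(d\mu^{\ge 2})$, one finds that for $(\Pi_{t_2}\otimes\dots\otimes\Pi_{t_n})$-a.a.\ $\mu^{\ge 2}$ the kernels $\mathfrak p(\Phi_{n-1}(\mu^{\ge 2}))$ and $\Phi_{n-1}(\mu^{\ge 2})$ agree $i(\mu^1)(x,\cdot)$-a.e., whence $i(\mu^1)\mathfrak p(\Phi_{n-1}(\mu^{\ge 2}))(x)=i(\mu^1)\Phi_{n-1}(\mu^{\ge 2})(x)$. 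Integrating over $\mu^1\sim\Pi_{t_1}$ and using Fubini once more gives the desired almost sure identity and closes the induction.

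The crux, and the only genuinely delicate point, is this last step: discarding the measurable presentation $\mathfrak p$ from the convolution by showing that $i(\mu^1)\mathfrak p(\Phi_{n-1}(\mu^{\ge 2}))$ and $i(\mu^1)\Phi_{n-1}(\mu^{\ge 2})$ coincide off a $\Pi$-null set coordinate by coordinate. This uses both the regularity of $\nu_{t_2+\dots+t_n}$ and the inductive identification of the law of $\Phi_{n-1}(\mu^{\ge 2})$, and the interchange of integrals has to be set up carefully (joint measurability of the indicator, Tonelli for the nonnegative integrand). Everything else — associativity of kernel convolution, measurability of $\Phi_{n-1}$ and of $(K_1,K_2)\mapsto K_1\mathfrak p(K_2)$, and the semigroup identity $\nu_{t_1}*\nu_{t_2+\dots+t_n}=\nu_{t_1+\dots+t_n}$ — is either routine or already available from earlier in the paper.
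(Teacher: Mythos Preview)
Your argument is correct, but it takes a noticeably longer route than the paper's. Both proofs proceed by induction on $n$, but the paper strips off the \emph{last} factor rather than the first: it writes
\[
\Phi_n(\mu^1,\dots,\mu^n)=\Phi_{n-1}(\mu^1,\dots,\mu^{n-1})\,i(\mu^n)=\Phi_{n-1}(\mu^1,\dots,\mu^{n-1})\,\mathfrak p(i(\mu^n)),
\]
using the pointwise identity $\mathfrak p\circ i=i$ (a consequence of $e\circ i=\mathrm{id}$, Lemma~\ref{lem:measurable_presentation}). Since $\mathfrak p(i(\mu^n))=i(\mu^n)$ holds for \emph{every} $\mu^n$, not merely almost surely, the map $(K,\mu)\mapsto K\mathfrak p(i(\mu))$ is measurable and one immediately recognises the convolution $\nu_{t_1+\dots+t_{n-1}}*\nu_{t_n}$ without any further work. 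Your choice to split off $i(\mu^1)$ forces you to compare $\Phi_{n-1}(\mu^{\ge 2})$ with $\mathfrak p(\Phi_{n-1}(\mu^{\ge 2}))$, and since $\Phi_{n-1}$ is in general not $\mathfrak p$-invariant, you need the Tonelli argument you flag as ``the only genuinely delicate point''. That step is correctly handled (joint measurability of $\Psi_{n-1}$ from Lemma~\ref{lem:1.5.2} gives you the measurable indicator you need), but it is entirely avoidable: peeling from the right exploits the built-in idempotence of $i$ and reduces the proof to two lines.
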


\begin{proof} The proof is by induction on $n\geq 1.$ For $n=1$ the statement is the definition of $\nu_t.$  Assume the result is proved for $n-1$ and let $A\in \mathcal{E}.$ We note that the map $(K,\mu)\mapsto Ki(\mu)=K\mathfrak{p}(i(\mu))$ is $\mathcal{E}\times \mathcal{B}(\mathcal{P}(M)^I)/\mathcal{E}$-measurable. Using Fubini's theorem, we get 
$$
\begin{aligned}
(&\Pi_{t_1}\otimes \ldots \otimes \Pi_{t_n})\left(\Phi^{-1}_n(A)\right)\\
&=\left(\Pi_{t_1}\otimes \ldots \otimes \Pi_{t_n}\right)\left\{(\mu^1,\ldots,\mu^n)\in (\mathcal{P}(M)^I)^n: i(\mu^1)\ldots i(\mu^{n-1}) \mathfrak{p}(i(\mu^n))\in A\right\}\\
&=\int_{E}\nu_{t_1+\ldots+t_{n-1}}\{K_1: K_1\mathfrak{p}(K_2)\in A\}\nu_{t_n}(dK_2)=(\nu_{t_1+\ldots+t_{n-1}}*\nu_{t_n})(A)\\
&=\nu_{t_1+\ldots+t_n}(A).
\end{aligned}
$$

\end{proof}

\section{Proof of the Theorem \ref{thm:SFK}}
\label{sec:main_thm_2}

\subsection{Probability space $(\Omega,\mathcal{A},\mathsf{P})$}\label{subsec:prob_space} As before, $Z$ is an at most countable dense set in $M$ and $m\mapsto z_m$ is a bijection between a subset  $I\subset \mathbb{N}$ and the set $Z$.  Recall a probability measure $\Pi_t$  on $(\mathcal{P}(M)^I,\mathcal{B}(\mathcal{P}(M))^{\otimes I})$ constructed in the Section \ref{subsec:Pi_t}. 
We will use mappings $i:\mathcal{P}(M)^I\to E,$ $e:E\to \mathcal{P}(M)^I,$ $\Phi_n:(\mathcal{P}(M)^I)^n\to E,$ $\Psi_n:M\times (\mathcal{P}(M)^I)^n\to \mathcal{P}(M),$ $\mathfrak{p}:E\to E,$ defined in Section \ref{subsec:approximating_procedure}. 
We recall  that  $\mathfrak{p}=i\circ e$ is a measurable presentation of every measure $\nu_t$ (Section \ref{subsec:FCS_nu_t}) and that $\mathfrak{p}\circ \mathfrak{p}=\mathfrak{p}$ (Lemma \ref{lem:measurable_presentation}).

For each $n\geq 0$ consider the probability space 
$$
(S_n,\mathcal{S}_n,\mathsf{P}_n)=(\mathcal{P}(M)^I,\mathcal{B}(\mathcal{P}(M))^{\otimes I},\Pi_{2^{-n}})^{\otimes \mathbb{Z}}.
$$
Note that $\mathcal{S}_n$ is the Borel $\sigma$-field on the complete separable metric space $S_n.$ Denote $D_n=2^{-n}\mathbb{Z},$ $D=\bigcup^\infty_{n=0} D_n.$ 

\begin{rem}
\label{rem:idea} If $\omega^n\in S_n$ we intuitively understand  $i(\omega^n_l)$ as the random kernel $K_{l2^{-n},(l+1)2^{-n}}$ from the future flow.
\end{rem}

Consider mappings
$$
\pi_{n-1,n}:S_n\to S_{n-1}, \ \pi_{n-1,n}\left(\omega^n\right)=\left(e\left( i(\omega^n_{2l})i(\omega^n_{2l+1})\right)\right)_{l\in \mathbb{Z}}.
$$

Mappings $\pi_{n-1,n}$ are measurable and surjective. To show measurability we note that the $l-$th component of $\pi_{n-1,n}$ equals $e\left( i(\omega^n_{2l})i(\omega^n_{2l+1})\right)\in \mathcal{P}(M)^I.$ Its element that corresponds to $m\in I$ is
$$
i(\omega^n_{2l})i(\omega^n_{2l+1})(z_m)=\Psi_2\left(z_m,\omega^n_{2l},\omega^n_{2l+1}\right)\in \mathcal{P}(M),
$$
and the mapping $\Psi_2$ is measurable (Lemma \ref{lem:1.5.2}).  Surjectivity of $\pi_{n-1,n}$ follows from the following Lemma.

\begin{lem}
\label{lem:surjectivity} Consider $\mu_0=(\delta_{z_m})_{m\in I}.$ Then for each $x\in M,$ $i(\mu_0)(x)=\delta_x.$ In particular, kernel $x\mapsto \delta_x$ is invariant under $\mathfrak{p}.$

\end{lem}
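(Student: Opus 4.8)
The plan is to unwind the definition of the map $i$ and combine the fact that $\varepsilon_j\downarrow 0$ with the defining property of the selection map $\ell$.

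First I would record that for every $x\in M$ and every $j\geq 1$ the index $m^x_j=\inf\{m\in I:\rho(x,z_m)<\varepsilon_j/2\}$ is a genuine element of $I$: since $Z$ is dense in $M$ the set $\{m\in I:\rho(x,z_m)<\varepsilon_j/2\}$ is non-empty. By definition of $\mu_0$ and of $i$,
$$
i(\mu_0)(x)=\ell\big((\mu_{0,m^x_j}:j\geq 1)\big)=\ell\big((\delta_{z_{m^x_j}}:j\geq 1)\big).
$$
Because $(\varepsilon_j:j\geq 1)$ strictly decreases to $0$ and $\rho(x,z_{m^x_j})<\varepsilon_j/2$, we have $z_{m^x_j}\to x$ in $(M,\rho)$, hence $\delta_{z_{m^x_j}}\to\delta_x$ in the weak topology of $\mathcal{P}(M)$. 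In particular the sequence $(\delta_{z_{m^x_j}}:j\geq 1)$ is relatively compact and its only limit point in $\mathcal{P}(M)$ is $\delta_x$. Applying the defining property of $\ell$ to this relatively compact sequence forces $\ell\big((\delta_{z_{m^x_j}}:j\geq 1)\big)=\delta_x$, so $i(\mu_0)(x)=\delta_x$, which is the first assertion.

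For the last assertion, let $K_0\in E$ denote the kernel $x\mapsto\delta_x$. Directly from the definition of $e$,
$$
e(K_0)=(K_0(z_m):m\in I)=(\delta_{z_m}:m\in I)=\mu_0,
$$
so that for every $x\in M$ we get $\mathfrak{p}(K_0)(x)=i(e(K_0))(x)=i(\mu_0)(x)=\delta_x=K_0(x)$; that is, $\mathfrak{p}(K_0)=K_0$, so $K_0$ is invariant under $\mathfrak{p}$.

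I do not expect a real obstacle here. The only two points requiring a little care are the non-emptiness of the set defining $m^x_j$ (which is exactly the density of $Z$) and the step that turns weak convergence of $\delta_{z_{m^x_j}}$ into the conclusion that $\delta_x$ is the \emph{unique} limit point, which is what lets us pin down the value of $\ell$; both are immediate. This Lemma then immediately yields surjectivity of $\pi_{n-1,n}$, since for any $\omega^{n-1}\in S_{n-1}$ one may take $\omega^n$ with $\omega^n_{2l}=\omega^{n-1}_l$ and $\omega^n_{2l+1}=\mu_0$, using $i(\mu_0)=K_0$ and $e\circ i=\mathrm{id}$ to compute $\pi_{n-1,n}(\omega^n)_l=e\big(i(\omega^{n-1}_l)K_0\big)=e\big(i(\omega^{n-1}_l)\big)=\omega^{n-1}_l$.
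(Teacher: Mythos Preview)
Your proof is correct and follows exactly the paper's approach: use $\rho(x,z_{m^x_j})<\varepsilon_j/2\to 0$ to get $\delta_{z_{m^x_j}}\to\delta_x$, then apply the defining property of $\ell$, and finally compute $\mathfrak{p}(K_0)=i(e(K_0))=i(\mu_0)=K_0$. Your surjectivity remark at the end is also correct; the paper makes the symmetric choice $\omega^n_{2l}=\mu_0$, $\omega^n_{2l+1}=\omega^{n-1}_l$ and uses $K_0K=K$ rather than $KK_0=K$, but either works since $K_0$ is a two-sided identity for kernel composition.
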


\begin{proof} For each $x\in M$ we have $\lim_{j\to\infty}z_{m^x_j}=x.$  Hence,
$$
i(\mu_0)(x)=\ell\left(\left(\delta_{z_{m^x_j}}:j\geq 1\right)\right)=\delta_x.
$$ 
Denote $K_0(x)=\delta_x.$ Then 
$$
\mathfrak{p}(K_0)(x)=i(\mu_0)(x)=\delta_x=K_0(x).
$$
\end{proof}

From Lemma \ref{lem:surjectivity} we deduce that   $i(\mu_0)K=K$ for each kernel $K\in E.$ For given  $\omega^{n-1}\in S_{n-1}$ define $\omega^n_{2l}=\mu_0,$ $\omega^n_{2l+1}=\omega^{n-1}_l.$ Then 
$$
(\pi_{n-1,n}(\omega^n))_l=e\left(i(\mu_0)i(\omega^{n-1}_{l})\right)=e\circ i (\omega^{n-1}_{l})=\omega^{n-1}_l.
$$
This proves surjectivity of $\pi_{n-1,n}.$ 

We note that  $\mathsf{P}_n\circ \pi^{-1}_{n-1,n}=\mathsf{P}_{n-1}.$ Indeed, under the measure $\mathsf{P}_n\circ \pi^{-1}_{n-1,n}$ components of $\omega^{n-1}$ are independent, and the law of $\omega^{n-1}_l$ equals  (Lemma \ref{lem:1.5.14})
$$
\left(\Pi_{2^{-n}}\otimes \Pi_{2^{-n}}\right)\circ (e\circ \Phi_2)^{-1}=\nu_{2^{-(n-1)}}\circ e^{-1}=\Pi_{2^{-(n-1)}}\circ i^{-1}\circ e^{-1}=\Pi_{2^{-(n-1)}}.
$$

Let the set $\Omega$ be the inverse limit 
$$
\Omega=\left\{\omega=(\omega^n)_{n\geq 0}\in \prod^\infty_{n=0}S_n: \ \forall n\geq 1 \ \pi_{n-1,n}(\omega^{n})=\omega^{n-1}\right\}
$$
(in the terminology of K.R. Parthasarathy \cite[Sec. 2, Ch. V]{Parthasarathy}). Let the mapping $\pi_n:\Omega\to S_n$ be a projection, $\pi_n(\omega)=\omega^n,$ and the $\sigma$-field $\mathcal{A}$ on $\Omega$ be  the smallest $\sigma$-field under which all projections $\pi_n,$ $n\geq 0,$ are measurable.
%
%
%
%
%
%
There exists a unique probability measure $\mathsf{P}$ on $(\Omega,\mathcal{A}),$ such that for all $n\geq 0$ and $C\in\mathcal{S}_n,$
$$
\mathsf{P}(\pi^{-1}_n(C))=\mathsf{P}_n(C)
$$
\cite[Th. 3.2, Ch. V]{Parthasarathy}.

For $(s,t)\in D^2,$ $s\leq t,$ let $\mathcal{A}_{s,t}$ be the $\sigma$-field generated by mappings $\omega\mapsto \omega^n_u,$ where $n\geq 0$ and  $u\in\mathbb{Z}$ are such that $(s,t)\in D^2_n$ and  $u2^{-n}\in [s,t).$ We note that $\mathcal{A}_{s,s}$ is the trivial $\sigma$-field $\{\varnothing,\Omega\}.$

\begin{lem}
\label{lem:measurable_functions_of_omega} For all $0\leq n\leq k$ and any $u\in \mathbb{Z},$ $\omega^n_u$ is a measurable function of  $\omega^k_{2^{k-n}u},\ldots,\omega^k_{2^{k-n}u+2^{k-n}-1}.$
\end{lem}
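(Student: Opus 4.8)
The plan is to argue by induction on the gap $d:=k-n$, using the defining relation of the inverse limit $\Omega$ together with the measurability of $\Psi_2$ established in Lemma~\ref{lem:1.5.2}.

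First I would record the one-step relation. For $\omega\in\Omega$ the constraint $\pi_{n-1,n}(\omega^n)=\omega^{n-1}$ holds for every $n\ge 1$, so by the definition of $\pi_{n-1,n}$ we have, for each $l\in\mathbb{Z}$,
\[
\omega^{n-1}_l=e\bigl(i(\omega^n_{2l})\,i(\omega^n_{2l+1})\bigr)\in\mathcal{P}(M)^I,
\]
whose coordinate indexed by $m\in I$ equals $\Psi_2(z_m,\omega^n_{2l},\omega^n_{2l+1})$. Since $\Psi_2$ is measurable (Lemma~\ref{lem:1.5.2}), the map $(\omega^n_{2l},\omega^n_{2l+1})\mapsto\omega^{n-1}_l$ is measurable; in other words $\omega^{n-1}_l$ is a measurable function of $\omega^n_{2l},\omega^n_{2l+1}$. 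This is exactly the case $d=1$ (with $u=l$) of the lemma.

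Next I would run the induction on $d\ge 0$. For $d=0$ there is nothing to prove, since $\omega^n_u=\omega^k_u$. Assuming the statement for some $d\ge 1$ and all $n$, fix $n$ and put $k=n+d+1$. By the one-step relation, $\omega^n_u$ is a measurable function of $\omega^{n+1}_{2u}$ and $\omega^{n+1}_{2u+1}$. By the induction hypothesis applied at level $n+1$ (the gap there is $k-(n+1)=d$), for every $v\in\mathbb{Z}$ the component $\omega^{n+1}_v$ is a measurable function of $\omega^k_{2^d v},\ldots,\omega^k_{2^d v+2^d-1}$. Substituting $v=2u$ and $v=2u+1$ and noting
\[
\{2^{d+1}u,\ldots,2^{d+1}u+2^d-1\}\cup\{2^{d+1}u+2^d,\ldots,2^{d+1}u+2^{d+1}-1\}=\{2^{d+1}u,\ldots,2^{d+1}u+2^{d+1}-1\},
\]
I would conclude that $\omega^n_u$ is a measurable function of $\omega^k_{2^{k-n}u},\ldots,\omega^k_{2^{k-n}u+2^{k-n}-1}$, since $k-n=d+1$, which closes the induction.

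I expect the argument to be essentially bookkeeping; the only points requiring care are the index arithmetic in the inductive step — verifying that the two index blocks at level $k$ arising from $v=2u$ and $v=2u+1$ tile exactly $\{2^{k-n}u,\ldots,2^{k-n}u+2^{k-n}-1\}$ — and invoking the measurability of $\Psi_2$ coordinatewise over $m\in I$. I do not anticipate a genuine obstacle.
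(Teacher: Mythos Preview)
Your proof is correct and follows essentially the same route as the paper: both argue by induction on the gap $k-n$, using that each $\omega^{m-1}_l=e\bigl(i(\omega^m_{2l})i(\omega^m_{2l+1})\bigr)$ is a measurable function of $(\omega^m_{2l},\omega^m_{2l+1})$ via $\Psi_2$. The only cosmetic difference is that you apply the one-step relation at the bottom of the tower (from level $n$ to $n+1$) and then invoke the inductive hypothesis, whereas the paper applies the inductive hypothesis first (down to level $k-1$) and then the one-step relation at the top; the index bookkeeping is the same either way.
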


\begin{proof} The proof is by induction on $k-n\geq 0.$ If $k=n,$ then the statement is obvious. Assume that $k< n$ and that  the statement is proved for  $k-1-n.$ Let   $u\in \mathbb{Z}.$ By the inductive hypothesis, there exists a measurable function $F:(\mathcal{P}(M)^I)^{2^{k-1-n}}\to \mathcal{P}(M)^I,$ such that 
$$
\omega^n_u=F\left(\omega^{k-1}_{2^{k-1-n}u},\ldots,\omega^{k-1}_{2^{k-1-n}u+2^{k-1-n}-1}\right).
$$
Then 
$$
\omega^n_u=F\left(e\left(i\left(\omega^{k}_{2^{k-n}u}\right)i\left(\omega^{k}_{2^{k-n}u+1}\right)\right),\ldots,e\left( i\left(\omega^{k}_{2^{k-n}u+2^{k-n}-2}\right) i\left(\omega^{k}_{2^{k-n}u+2^{k-n}-1}\right)\right)\right)
$$
is a measurable function of $\omega^k_{2^{k-n}u},\ldots,\omega^k_{2^{k-n}u+2^{k-n}-1}.$

\end{proof}

\begin{lem}
\label{lem:sigma_fields_A_st}  If $(r,s,t)\in D^3,$ $r\leq s\leq t,$ then $\sigma-$fields $\mathcal{A}_{r,s}$ and $\mathcal{A}_{s,t}$ are independent. If $(r_1,r_2,r_3,r_4)\in D^4,$ $r_1\leq r_2\leq r_3\leq r_4,$ then $\mathcal{A}_{r_2,r_3}\subset \mathcal{A}_{r_1,r_4}.$
\end{lem}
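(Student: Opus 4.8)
The plan is to first record a convenient description of the $\sigma$-fields $\mathcal{A}_{s,t}$. For $(s,t)\in D^2$ with $s\le t$ let $n_0=n_0(s,t)$ be the smallest $n$ with $s,t\in D_n$, and for $k\ge n_0$ set $\mathcal{G}^k_{s,t}=\sigma\big(\omega\mapsto \omega^k_v:\ v2^{-k}\in[s,t)\big)$. By the very definition of $\mathcal{A}_{s,t}$ we have $\mathcal{A}_{s,t}=\bigvee_{k\ge n_0}\mathcal{G}^k_{s,t}$. The crucial preliminary step is to observe that the family $(\mathcal{G}^k_{s,t})_{k\ge n_0}$ is increasing in $k$: if $v\in\mathbb{Z}$ satisfies $v2^{-k}\in[s,t)$ then, using that $s,t\in D_k$, both $(2v)2^{-(k+1)}$ and $(2v+1)2^{-(k+1)}$ lie in $[s,t)$, while by Lemma \ref{lem:measurable_functions_of_omega} $\omega^k_v$ is a measurable function of $\omega^{k+1}_{2v}$ and $\omega^{k+1}_{2v+1}$; hence $\mathcal{G}^k_{s,t}\subset\mathcal{G}^{k+1}_{s,t}$. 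Consequently $\bigcup_{k\ge n_0}\mathcal{G}^k_{s,t}$ is an algebra, in particular a $\pi$-system, generating $\mathcal{A}_{s,t}$.

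For the inclusion assertion this description makes things immediate. Given $r_1\le r_2\le r_3\le r_4$ in $D$, choose $n_0$ so large that $r_1,r_2,r_3,r_4\in D_{n_0}$. For every $k\ge n_0$ one has $\{v:v2^{-k}\in[r_2,r_3)\}\subset\{v:v2^{-k}\in[r_1,r_4)\}$, so $\mathcal{G}^k_{r_2,r_3}\subset\mathcal{G}^k_{r_1,r_4}\subset\mathcal{A}_{r_1,r_4}$. Taking the union over $k\ge n_0$ yields $\mathcal{A}_{r_2,r_3}\subset\mathcal{A}_{r_1,r_4}$.

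For the independence assertion, fix $r\le s\le t$ in $D$ and pick $n_0$ with $r,s,t\in D_{n_0}$ (if two of $r,s,t$ coincide one of the $\sigma$-fields is trivial and there is nothing to prove, so assume they are distinct). For each $k\ge n_0$ the index blocks $\{v:v2^{-k}\in[r,s)\}$ and $\{v:v2^{-k}\in[s,t)\}$ are disjoint subsets of $\mathbb{Z}$. Since $\mathsf{P}\circ\pi_k^{-1}=\mathsf{P}_k=\Pi_{2^{-k}}^{\otimes\mathbb{Z}}$, under $\mathsf{P}$ the coordinates $(\omega^k_v)_{v\in\mathbb{Z}}$ are independent, so $\mathcal{G}^k_{r,s}$ and $\mathcal{G}^k_{s,t}$ are independent. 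Now take $A\in\bigcup_{k\ge n_0}\mathcal{G}^k_{r,s}$ and $B\in\bigcup_{k\ge n_0}\mathcal{G}^k_{s,t}$; choosing $k$ above both their levels and invoking the monotonicity just proved, we may assume $A\in\mathcal{G}^k_{r,s}$ and $B\in\mathcal{G}^k_{s,t}$, whence $\mathsf{P}(A\cap B)=\mathsf{P}(A)\mathsf{P}(B)$. Thus the two generating $\pi$-systems $\bigcup_k\mathcal{G}^k_{r,s}$ and $\bigcup_k\mathcal{G}^k_{s,t}$ are independent, and the standard $\pi$-system criterion gives independence of the generated $\sigma$-fields $\mathcal{A}_{r,s}$ and $\mathcal{A}_{s,t}$.

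The only genuinely delicate point is the monotonicity $\mathcal{G}^k_{s,t}\subset\mathcal{G}^{k+1}_{s,t}$, which is where Lemma \ref{lem:measurable_functions_of_omega} enters together with the elementary fact that refining the dyadic mesh keeps the relevant indices inside $[s,t)$; once this is in place, both statements reduce to bookkeeping with product measures and $\pi$-systems.
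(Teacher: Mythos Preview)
Your proof is correct and follows essentially the same strategy as the paper's: both arguments reduce to a single dyadic level via Lemma~\ref{lem:measurable_functions_of_omega} and then exploit the product structure of $\mathsf{P}_k$. The paper works directly with finite collections of generators $\omega^{n_i}_{u_i}$, lifts them to a common level $N$, and checks the relevant index inequalities; you package the same idea more systematically by introducing the increasing family $\mathcal{G}^k_{s,t}$ and invoking the $\pi$-system criterion explicitly, which makes the passage from ``generators are independent'' to ``$\sigma$-fields are independent'' cleaner than the paper's somewhat terse final sentence.
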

\begin{proof}
Let $(r,s,t)\in D^3,$ $r<s<t.$ Consider $n_1,\ldots,n_k\geq 0,$ $u_1,\ldots,u_k\in \mathbb{Z},$  $m_1,\ldots,m_l\geq 0,$ $v_1,\ldots,v_l\in \mathbb{Z},$ such that $(r,s)\in D^2_{n_i},$ $u_12^{-n_1},\ldots,u_k 2^{-n_k}\in [r,s),$   $(s,t)\in  D^2_{m_j},$ $v_12^{-m_1},\ldots,v_l 2^{-m_l}\in [s,t).$ Denote $N=\max(n_1,\ldots,n_k,m_1,\ldots,m_l).$ By the Lemma \ref{lem:measurable_functions_of_omega}, each $\omega^{n_i}_{u_i}$ is a measurable function of $\omega^N_{2^{N-n_i}u_i},\ldots,\omega^N_{2^{N-n_i}u_i+2^{N-n_i}-1},$ and each $\omega^{m_j}_{v_j}$ is a measurable function of $\omega^N_{2^{N-m_j}v_j},\ldots,\omega^N_{2^{N-m_j}v_j+2^{N-m_j}-1}.$  Since $s\in D_{n_i},$ we write $s=2^{-n_i}a,$ $a\in \mathbb{Z}.$ Inequality $u_i 2^{-n_i}<s$ implies $u_i+1\leq a.$ Hence,
$$
2^{N-n_i}u_i+2^{N-n_i}-1\leq 2^{N-n_i}a-1=2^Ns-1.
$$
Since $s\in D_{m_j},$ we write $s=2^{-m_j}b,$ $b\in \mathbb{Z}.$ Inequality $v_j 2^{-m_j}\geq s$ implies $v_j\geq b.$ Hence,
$$
2^{N-m_j}v_j\geq 2^{N-m_j}b=2^Ns.
$$
Independence of $\mathcal{A}_{r,s}$ and $\mathcal{A}_{s,t}$ now follows from idependence of $(\omega^N_k)_{k\in \mathbb{Z}}.$

Let  $(r_1,r_2,r_3,r_4)\in D^4,$ $r_1\leq r_2< r_3\leq r_4.$ Let $(r_2,r_3)\in D^2_n,$ $u2^{-n}\in [r_2,r_3).$  Take $N\geq n,$ such that $(r_1,r_2,r_3,r_4)\in D^4_N.$ $\omega^n_u$ is a measurable function of $\omega^N_{2^{N-n}u},$ $\ldots,$ $\omega^N_{2^{N-n}u+2^{N-n}-1}.$ We note that 
$$
2^{N-n}u 2^{-N}=2^{-n}u\geq r_2\geq r_1,
$$
and, since $u+1\leq 2^nr_3,$
$$
\left(2^{N-n}u+2^{N-n}-1\right)2^{-N}=2^{-n}(u+1)-2^{-N}\leq r_3-2^{-N}<r_3\leq r_4.
$$
It follows that $\omega^n_u$ is $\mathcal{A}_{r_1,r_4}$-measurable.

\end{proof}

\subsection{Random kernels $^DK_{s,t}$ for $(s,t)\in D^2,$ $s\leq t$} 
\label{subsec:SFK_dyadic}

%

\begin{dfn}
\label{def:1.6.1}  \begin{enumerate} \item  For $(s,t)\in D^2_n,$ $s<t,$ and $\omega\in \Omega,$ we define 
$$
{}^DK^{(n)}_{s,t}(\omega)=\Phi_{(t-s)2^n}\left(\omega^n_{s2^n},\ldots,\omega^n_{t2^n-1}\right).
$$

\item For $(s,t)\in D^2,$ $s<t,$ we define 
$$
{}^DK_{s,t}=\mathfrak{p}\left({}^DK^{(n)}_{s,t}\right),
$$
where $n\geq 0$ is the minimal non-negative integer, such that $(s,t)\in D^2_n.$

\item For all $t\in D$ and $x\in M,$ we define 
$$
{}^D K_{t,t}(x)=\delta_x.
$$

\end{enumerate}

\end{dfn}


\begin{prp}
\label{prp:properties_of_K} \begin{enumerate} \item 
For all $(s,t)\in D^2,$ $s\leq t,$ we have $\mathfrak{p}({}^D K_{s,t})={}^DK_{s,t}.$

\item  Let $s\in D_n.$ Then ${}^DK_{s,s+2^{-n}}\left(\omega\right)={}^D K^{(n)}_{s,s+2^{-n}}\left(\omega\right)=\mathfrak{p} \left({}^D K^{(n)}_{s,s+2^{-n}}(\omega)\right)=i\left(\omega^n_{s2^n}\right),$ $\omega\in \Omega.$

\item If $(s,t)\in D^2_n,$ $s< t,$ then ${}^D K^{(n)}_{s,t}$  is a measurable function of ${}^DK^{(n)}_{s,s+2^{-n}},$ $\ldots,$${}^DK^{(n)}_{t-2^{-n}t},$ and is  $\mathcal{A}_{s,t}/\mathcal{E}$-measurable.  If $(s,t)\in D^2,$ $s\leq t,$ then ${}^DK_{s,t}$  is a measurable function of 
$$
\left\{{}^DK^{(n)}_{r,r+2^{-n}}: n\geq 0, r\in D_n, [r,r+2^{-n})\subset [s,t)\right\}
$$
 and  is $\mathcal{A}_{s,t}/\mathcal{E}$-measurable.

\item If $(s,t)\in D^2_n,$ $s\leq t,$ then the distribution of ${}^D K^{(n)}_{s,t}$ in the space of kernels $(E,\mathcal{E})$ coincides with $\nu_{t-s}.$  If $(s,t)\in D^2,$ $s\leq t,$ then the distribution of ${}^D K_{s,t}$ in the space of kernels $(E,\mathcal{E})$ coincides with $\nu_{t-s}.$

\item If $(r,s,t)\in D^3_n,$ $r< s< t,$  then ${}^DK^{(n)}_{r,t}={}^DK^{(n)}_{r,s} {}^DK^{(n)}_{s,t}.$ 


\end{enumerate}
\end{prp}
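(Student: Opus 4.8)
The plan is to prove the five items essentially in the order stated, since each relies on the previous ones and on the machinery from Section 3. For item (1), I would note that $\mathfrak{p}\circ\mathfrak{p}=\mathfrak{p}$ (Lemma 3.8), so $\mathfrak{p}({}^DK_{s,t})=\mathfrak{p}(\mathfrak{p}({}^DK^{(n)}_{s,t}))=\mathfrak{p}({}^DK^{(n)}_{s,t})={}^DK_{s,t}$ for $s<t$; the case $s=t$ is Lemma 4.4 (the kernel $x\mapsto\delta_x$ is $\mathfrak{p}$-invariant). For item (2): when $s\in D_n$ and we look at the increment of length $2^{-n}$, by Definition 4.1(1) we have ${}^DK^{(n)}_{s,s+2^{-n}}(\omega)=\Phi_1(\omega^n_{s2^n})=i(\omega^n_{s2^n})$ by the definition of $\Phi_1$; then $\mathfrak{p}(i(\omega^n_{s2^n}))=i(e(i(\omega^n_{s2^n})))=i(\omega^n_{s2^n})$ since $e\circ i$ is the identity (Lemma 3.5). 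The only subtlety is that $n$ need not be minimal with $(s,s+2^{-n})\in D^2_n$, so I should check that ${}^DK_{s,s+2^{-n}}$ as defined via the minimal index $m\le n$ agrees; this follows because ${}^DK^{(m)}$ and ${}^DK^{(n)}$ are related by grouping pairs of subintervals through $\Phi$, and the compatibility of $\Phi_k$ with this grouping gives ${}^DK^{(m)}_{s,s+2^{-n}}=\Phi_{(\text{length})2^m}(\dots)$ which, after applying $\mathfrak p$, coincides with $i(\omega^m_{s2^m})$ — but really the cleanest route is to first establish the grouping identity as part of item (5) and then derive consistency of the definitions.

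For item (3), measurability of ${}^DK^{(n)}_{s,t}$ as a function of the length-$2^{-n}$ increments: by item (5) (to be proved) ${}^DK^{(n)}_{s,t}$ is an iterated convolution $i(\omega^n_{s2^n})\cdots i(\omega^n_{t2^n-1})$, hence a measurable function of those increments via the measurability of $\Phi_k$ (Lemma 3.6); and since each increment ${}^DK^{(n)}_{r,r+2^{-n}}=i(\omega^n_{r2^n})$ is $\mathcal{A}_{s,t}$-measurable whenever $[r,r+2^{-n})\subset[s,t)$ (by definition of $\mathcal{A}_{s,t}$), the whole kernel is $\mathcal{A}_{s,t}/\mathcal{E}$-measurable. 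For ${}^DK_{s,t}$ one applies $\mathfrak{p}$, which is measurable, and uses Lemma 3.11 to pass between different dyadic levels. For item (4), the law of ${}^DK^{(n)}_{s,t}$: under $\mathsf{P}$ the components $\omega^n_{s2^n},\dots,\omega^n_{t2^n-1}$ are i.i.d.\ with law $\Pi_{2^{-n}}$, so $(\Pi_{2^{-n}}\otimes\cdots\otimes\Pi_{2^{-n}})\circ\Phi_{(t-s)2^n}^{-1}=\nu_{t-s}$ is exactly Lemma 3.14; applying the measurable presentation $\mathfrak{p}$ does not change the law since $\mathfrak{p}(K)=K$ $\nu_{t-s}$-a.s., giving the law of ${}^DK_{s,t}$.

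For item (5), the flow/evolutionary property at a fixed dyadic level: writing $a=r2^n$, $b=s2^n$, $c=t2^n$, I have ${}^DK^{(n)}_{r,t}=\Phi_{c-a}(\omega^n_a,\dots,\omega^n_{c-1})=i(\omega^n_a)\cdots i(\omega^n_{c-1})$, and I need this to equal $\bigl(i(\omega^n_a)\cdots i(\omega^n_{b-1})\bigr)\bigl(i(\omega^n_b)\cdots i(\omega^n_{c-1})\bigr)$. This is associativity of the composition $K_1,K_2\mapsto K_1K_2=K_1\mathfrak{p}(K_2)$ on kernels, combined with the fact (used in defining $\Phi_n$) that $i(\mu)K=i(\mu)\mathfrak{p}(K)$ — indeed $\mathfrak{p}=i\circ e$ and $i(\mu)i(\nu)=i(\mu)\mathfrak{p}(i(\nu))$ already by construction. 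The key point is that $K\mapsto Kf(x)=\int f\,dK(x)$ written out via iterated integrals is genuinely associative when all composands on the right are of the form $i(\mu)$, because then every intermediate convolution is literally the measure-theoretic Chapman–Kolmogorov composition with no presentation issues. I expect item (5) — specifically making the associativity argument clean given that convolution of kernels is only defined up to a measurable presentation — to be the main obstacle; the resolution is that all factors here are images under $i$, for which $e\circ i=\mathrm{id}$ forces $\mathfrak{p}(i(\mu))=i(\mu)$, so the presentation drops out and ordinary Fubini gives associativity.
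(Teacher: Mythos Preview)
Your proposal is largely correct and follows the paper's approach. Two points deserve comment.

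For item (2), the ``subtlety'' you worry about does not exist: if $s\in D_n$, then $n$ is automatically the minimal index $m$ with $(s,s+2^{-n})\in D_m^2$. Indeed, if $m<n$ and both $s=k2^{-m}$ and $s+2^{-n}=l2^{-m}$ with $k,l\in\mathbb Z$, then $l-k=2^{m-n}\in(0,1)$, which is impossible for integers. Hence ${}^DK_{s,s+2^{-n}}=\mathfrak p({}^DK^{(n)}_{s,s+2^{-n}})$ directly by Definition~\ref{def:1.6.1}(2), and no cross-level consistency check is needed. Your proposed detour through a ``grouping identity'' would in any case run into trouble: the relation between ${}^DK^{(m)}$ and ${}^DK^{(n)}$ for $m\neq n$ is only an almost-sure statement (this is precisely the content of the later Proposition~\ref{prop:1.6.3_aux}), whereas item (2) asserts a pointwise identity in $\omega$.

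For item (5), you are over-anticipating difficulty. The composition $K_1K_2(x,B)=\int_M K_2(y,B)\,K_1(x,dy)$ of \emph{deterministic} kernels is associative by Fubini's theorem, with no presentation involved; the measurable presentation $\mathfrak p$ is needed only to make the map $(K_1,K_2)\mapsto K_1K_2$ jointly $\mathcal E\otimes\mathcal E/\mathcal E$-measurable, not to define the composition itself. Since $\Phi_k(\mu^1,\dots,\mu^k)=i(\mu^1)\cdots i(\mu^k)$ is by definition this iterated composition, item (5) is literally associativity, and the paper dispatches it in one line: ``follows from the definition of ${}^DK^{(n)}$.''
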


\begin{proof}

\begin{enumerate}

\item 
 If $s<t,$ then the result follows from the fact that $\mathfrak{p}\circ \mathfrak{p}=\mathfrak{p}.$ Let $s=t.$  Let $\mu_0:=e({}^DK_{t,t})=(\delta_{z_m})_{m\in I}.$ Then $i(\mu_0)={}^DK_{t,t}$ (Lemma \ref{lem:surjectivity}) and, since $e\circ i$ is the identity mapping on $\mathcal{P}(M)^I,$
 $$
 \mathfrak{p}({}^DK_{t,t})=i\circ e\circ i (\mu_0)=i(\mu_0)={}^DK_{t,t}.
 $$

\item  Let $m=\inf\{k\geq 0: (s,s+2^{-n})\in D^2_k\}\leq n.$ Write $s=j2^{-n}$ with $j\in \mathbb{Z},$  $s+2^{-n}=(j+1)2^{-n}.$ Assume that $m<n.$ Then $j2^{-n}=k2^{-m},$ $(j+1)2^{-n}=l2^{-m},$  $(l-k)2^{-m}=2^{-n}.$ It follows that  $l-k=2^{m-n}\in (0,1),$ which is impossible. So, $m=n.$ 
Further,
$$
{}^DK_{s,s+2^{-n}}\left(\omega\right)=\mathfrak{p}\left({}^DK^{(n)}_{s,s+2^{-n}}\left(\omega\right)\right)=\mathfrak{p}\circ i (\omega^n_{s2^n})=i (\omega^n_{s2^n}).
$$
Here we again use the property $\mathfrak{p}\circ i=i.$

\item 
Random mapping $K^{(n)}_{s,s+2^{-n}}(\omega)=i(\omega^n_{s2^n})$ is $\mathcal{A}_{s,s+2^{-n}}/\mathcal{E}$-measurable. The needed result follows from equality
$$
{}^D K^{(n)}_{s,t}=\Phi_{(t-s)2^n}\left(e\left({}^DK^{(n)}_{s,s+2^{-n}}\right),\ldots,e\left({}^DK^{(n)}_{t-2^{-n},t}\right)\right).
$$


\item Follows from the Lemma \ref{lem:1.5.14}, definitions of ${}^DK^{(n)}_{s,t}$ and ${}^DK_{s,t},$ and the fact that distributions of $K$ and $\mathfrak{p}(K)$ with respect to any measure $\nu_t$ coincide.

\item Follows from the definition of ${}^DK^{(n)}.$ 
%
%



\end{enumerate}

\end{proof}

%
%
%
%
%
%
%
%
%
%
%
%
%
%

\begin{lem}
\label{lem:1.6.4} Let $(s,t)\in D^2_n,$ $s<t.$ For any $\mathcal{P}(M)$-valued random element $\mathcal{M},$ $\mathcal{M}{}^DK^{(n)}_{s,t}$ and $\mathcal{M}{}^DK_{s,t}$ are  random elements in $\mathcal{P}(M).$ 

\end{lem}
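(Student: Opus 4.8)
The plan is to route every convolution through the single measurable operation $(\nu,K)\mapsto\nu\mathfrak{p}(K)$ on $\mathcal{P}(M)\times E$, whose measurability was noted in the proof of Lemma~\ref{lem:1.5.2}. The first, auxiliary, step is to observe that if $\mathcal{N}$ is any $\mathcal{P}(M)$-valued random element on $(\Omega,\mathcal{A},\mathsf{P})$ and $\mathcal{K}$ is any random element in $(E,\mathcal{E})$ on $(\Omega,\mathcal{A},\mathsf{P})$, then $\mathcal{N}\mathfrak{p}(\mathcal{K})$ is a $\mathcal{P}(M)$-valued random element, since it is the composition of the measurable map $\omega\mapsto(\mathcal{N}(\omega),\mathcal{K}(\omega))$ with the measurable map $(\nu,K)\mapsto\nu\mathfrak{p}(K)$. (Well-definedness of $\mathcal{M}\,{}^DK^{(n)}_{s,t}$ and $\mathcal{M}\,{}^DK_{s,t}$ as $\mathcal{P}(M)$-valued maps is clear, since $\mu K\in\mathcal{P}(M)$ whenever $\mu\in\mathcal{P}(M)$, $K\in E$; only joint measurability in $\omega$ is at issue.)

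The case of ${}^DK_{s,t}$ is then immediate: by Proposition~\ref{prp:properties_of_K}(1) we have $\mathfrak{p}({}^DK_{s,t})={}^DK_{s,t}$, hence $\mathcal{M}\,{}^DK_{s,t}=\mathcal{M}\,\mathfrak{p}({}^DK_{s,t})$, and by Proposition~\ref{prp:properties_of_K}(3) the kernel ${}^DK_{s,t}$ is $\mathcal{A}_{s,t}/\mathcal{E}$-measurable, hence a random element in $(E,\mathcal{E})$; now apply the auxiliary step with $\mathcal{N}=\mathcal{M}$ and $\mathcal{K}={}^DK_{s,t}$.

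For ${}^DK^{(n)}_{s,t}$ I would induct on $k=(t-s)2^n\ge 1$. For $k=1$, Proposition~\ref{prp:properties_of_K}(2) gives ${}^DK^{(n)}_{s,s+2^{-n}}=i(\omega^n_{s2^n})=\mathfrak{p}(i(\omega^n_{s2^n}))$ (the last equality because $\mathfrak{p}\circ i=i$, cf. Lemma~\ref{lem:measurable_presentation}); since $\omega\mapsto\omega^n_{s2^n}$ is $\mathcal{A}$-measurable and $i$ is measurable, $\omega\mapsto i(\omega^n_{s2^n})$ is a random element in $(E,\mathcal{E})$, and the auxiliary step applies. For $k\ge 2$, combine Proposition~\ref{prp:properties_of_K}(5), applied with $(r,s,t)$ replaced by $(s,t-2^{-n},t)$, with Proposition~\ref{prp:properties_of_K}(2) to get ${}^DK^{(n)}_{s,t}={}^DK^{(n)}_{s,t-2^{-n}}\,i(\omega^n_{t2^n-1})$; then, using associativity $\mu(K_1K_2)=(\mu K_1)K_2$ together with $\mathfrak{p}\circ i=i$,
$$
\mathcal{M}\,{}^DK^{(n)}_{s,t}=\bigl(\mathcal{M}\,{}^DK^{(n)}_{s,t-2^{-n}}\bigr)\,\mathfrak{p}\bigl(i(\omega^n_{t2^n-1})\bigr).
$$
Since $(t-2^{-n}-s)2^n=k-1<k$, the induction hypothesis makes $\mathcal{M}\,{}^DK^{(n)}_{s,t-2^{-n}}$ a $\mathcal{P}(M)$-valued random element, and one final application of the auxiliary step (with $\mathcal{N}=\mathcal{M}\,{}^DK^{(n)}_{s,t-2^{-n}}$, $\mathcal{K}=i(\omega^n_{t2^n-1})$) completes the induction.

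The one point that requires care — the main obstacle — is that the plain convolution $(\nu,K)\mapsto\nu K$ is \emph{not} measurable, so one cannot simply compose $\mathcal{M}$ with the random kernel ${}^DK^{(n)}_{s,t}$, which is itself not $\mathfrak{p}$-invariant when $n$ is not minimal. The resolution is precisely the structural observation that ${}^DK^{(n)}_{s,t}$ is an iterated composition of kernels of the form $i(\mu)$, every one of which is $\mathfrak{p}$-invariant, so that each composition step can be carried out through the measurable operation $(\nu,K)\mapsto\nu\mathfrak{p}(K)$.
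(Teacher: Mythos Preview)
Your proposal is correct and follows essentially the same approach as the paper: both reduce measurability to the single measurable operation $(\nu,K)\mapsto\nu\mathfrak{p}(K)$, handle ${}^DK_{s,t}$ via its $\mathfrak{p}$-invariance from Proposition~\ref{prp:properties_of_K}(1), and handle ${}^DK^{(n)}_{s,t}$ by induction on $(t-s)2^n$ using the factorization ${}^DK^{(n)}_{s,t}={}^DK^{(n)}_{s,t-2^{-n}}\,{}^DK^{(n)}_{t-2^{-n},t}$ together with $\mathfrak{p}$-invariance of each single-step kernel. Your write-up is more explicit about the obstacle (non-measurability of the plain convolution) and about the resolution, but the argument is the same.
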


\begin{proof} We note that the mapping $(\mu,K)\mapsto \mu\mathfrak{p}(K)$ is measurable. This implies measurability of $\mathcal{M}{}^DK_{s,t},$ since $\mathfrak{p}({}^DK_{s,t})={}^DK_{s,t}.$ If $t=s+2^{-n},$ we have $\mathfrak{p}({}^DK^{(n)}_{s,s+2^{-n}})={}^DK^{(n)}_{s,s+2^{-n}}.$ So, $\mathcal{M}{}^DK^{(n)}_{s,s+2^{-n}}$ is a random element in $\mathcal{P}(M).$ By induction, 
$$
\mathcal{M}{}^DK^{(n)}_{s,t}=\mathcal{M}{}^DK^{(n)}_{s,t-2^{-n}}{}^DK^{(n)}_{t-2^{-n},t}
$$
is a random element in $\mathcal{P}(M).$ 

\end{proof}

%
%
%
%

\begin{prp}
\label{prop:1.6.3_aux} Let $(s,t)\in D^2_n,$ $s<  t.$ For any $\mathcal{P}(M)$-valued random element $\mathcal{M}$ independent from $\mathcal{A}_{s,t},$
 $$
\mathcal{M}{}^DK^{(n)}_{s,t}=\mathcal{M}{}^DK_{s,t} \ \mbox{ a.s.}
$$

\end{prp}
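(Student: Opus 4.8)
The plan is to reduce the statement to one elementary fact about convolving a random measure with a random kernel, and then to remove the dependence on the particular level $n$ by a refinement argument. \emph{The key observation} is: if $K$ is a random kernel with law $\nu_{r}$ (some $r\ge 0$) for which $(x,\omega)\mapsto K(\omega)(x)$ is jointly measurable — which holds for every kernel occurring below, by Lemma \ref{lem:1.5.2} — and $\mathcal{N}$ is a $\mathcal{P}(M)$-valued random element independent of $K$, then $\mathcal{N}K=\mathcal{N}\mathfrak{p}(K)$ a.s. Indeed, $(x,\omega)\mapsto\mathfrak{p}(K(\omega))(x)$ is jointly measurable (Lemma \ref{lem:measurable_presentation}), hence so is $(x,\omega)\mapsto d\big(K(\omega)(x),\mathfrak{p}(K(\omega))(x)\big)\wedge 1$; for each fixed $x$ its $\omega$-section has $\mathsf{P}$-expectation zero since $\mathfrak{p}$ is a measurable presentation of $\nu_{r}$ and $K$ has law $\nu_{r}$. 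A routine conditioning on $\mathcal{N}$ (permissible by independence) followed by Fubini then gives $\mathsf{E}\big[\int_{M}(d(K(x),\mathfrak{p}(K)(x))\wedge 1)\,\mathcal{N}(dx)\big]=0$, so for $\mathsf{P}$-a.e.\ $\omega$ one has $K(\omega)(x)=\mathfrak{p}(K(\omega))(x)$ for $\mathcal{N}(\omega)$-a.e.\ $x$, whence $\mathcal{N}K=\mathcal{N}\mathfrak{p}(K)$ a.s.

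Apply this with $\mathcal{N}=\mathcal{M}$ and $K={}^DK^{(n)}_{s,t}$: the latter has law $\nu_{t-s}$ (Proposition \ref{prp:properties_of_K}(4)) and is $\mathcal{A}_{s,t}/\mathcal{E}$-measurable (Proposition \ref{prp:properties_of_K}(3)), hence independent of $\mathcal{M}$, so $\mathcal{M}\,{}^DK^{(n)}_{s,t}=\mathcal{M}\,\mathfrak{p}({}^DK^{(n)}_{s,t})$ a.s. If $n$ is the minimal integer with $(s,t)\in D^{2}_{n}$, then $\mathfrak{p}({}^DK^{(n)}_{s,t})={}^DK_{s,t}$ by Definition \ref{def:1.6.1} and the proposition is proved. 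For a general $n$, let $n_{0}\le n$ be the minimal level; it is then enough to show $\mathcal{M}\,{}^DK^{(n)}_{s,t}=\mathcal{M}\,{}^DK^{(n_{0})}_{s,t}$ a.s.\ and combine with the previous line.

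To get $\mathcal{M}\,{}^DK^{(n)}_{s,t}=\mathcal{M}\,{}^DK^{(n_{0})}_{s,t}$ a.s., I would prove, for each $m\ge n_{0}$, that $\mathcal{M}\,{}^DK^{(m)}_{s,t}=\mathcal{M}\,{}^DK^{(m+1)}_{s,t}$ a.s.\ and iterate over $m=n_{0},\dots,n-1$. Put $k=(t-s)2^{m}$ and $A_{j}={}^DK^{(m+1)}_{s+j2^{-m},\,s+(j+1)2^{-m}}$, $0\le j\le k-1$. From Definition \ref{def:1.6.1} (and associativity of composition of kernels) one has ${}^DK^{(m+1)}_{s,t}=A_{0}A_{1}\cdots A_{k-1}$, while iterating Proposition \ref{prp:properties_of_K}(5) at level $m$, together with Proposition \ref{prp:properties_of_K}(2) and the definition of $\pi_{m,m+1}$ — which give ${}^DK^{(m)}_{s+j2^{-m},\,s+(j+1)2^{-m}}=i(\omega^{m}_{s2^{m}+j})=i(e(A_{j}))=\mathfrak{p}(A_{j})$ — one gets ${}^DK^{(m)}_{s,t}=\mathfrak{p}(A_{0})\,\mathfrak{p}(A_{1})\cdots\mathfrak{p}(A_{k-1})$. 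Each $A_{j}$ has law $\nu_{2^{-m}}$ (Proposition \ref{prp:properties_of_K}(4)), is $\mathcal{A}_{s+j2^{-m},\,s+(j+1)2^{-m}}$-measurable (Proposition \ref{prp:properties_of_K}(3)), and these $\sigma$-fields, all inside $\mathcal{A}_{s,t}$, are mutually independent (Lemma \ref{lem:sigma_fields_A_st}). Now peel off the factors from the left: $\mathcal{M}\,\mathfrak{p}(A_{0})=\mathcal{M}A_{0}$ a.s.\ by the key observation; the random measure $\mathcal{M}A_{0}$ is (a.s.\ equal to) a measurable function of $(\mathcal{M},A_{0})$, which is independent of $(A_{1},\dots,A_{k-1})$ — by conditioning on $\mathcal{A}_{s,t}$ and using $\mathcal{M}\perp\mathcal{A}_{s,t}$ with $A_{0}\perp(A_{1},\dots,A_{k-1})$ — hence $\mathcal{M}A_{0}\perp A_{1}$ and $(\mathcal{M}A_{0})\mathfrak{p}(A_{1})=(\mathcal{M}A_{0})A_{1}$ a.s.; proceeding the same way, after $k$ steps $\mathcal{M}\,{}^DK^{(m)}_{s,t}=\mathcal{M}A_{0}A_{1}\cdots A_{k-1}=\mathcal{M}\,{}^DK^{(m+1)}_{s,t}$ a.s.

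The only genuinely non-routine ingredient is the key observation; the rest is bookkeeping. The delicate part of the bookkeeping is the cascade of independences ``$(\mathcal{M},A_{0},\dots,A_{j-1})$ is independent of $(A_{j},\dots,A_{k-1})$'' needed at each peeling step, which must be obtained by repeated conditioning on $\mathcal{A}_{s,t}$, combining the independence of $\mathcal{M}$ from $\mathcal{A}_{s,t}$ with the independence of the $\sigma$-fields $\mathcal{A}_{\cdot,\cdot}$ over disjoint dyadic intervals (Lemma \ref{lem:sigma_fields_A_st}); one also has to keep all the random kernels and measures jointly measurable so that Fubini applies, which is ensured by Lemmas \ref{lem:measurable_presentation} and \ref{lem:1.5.2}.
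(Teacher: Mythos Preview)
Your proof is correct and follows essentially the same route as the paper's own argument: both hinge on the Fubini-based fact that $\mathcal{N}K=\mathcal{N}\mathfrak{p}(K)$ a.s.\ whenever $K$ has law $\nu_r$ and $\mathcal{N}$ is independent of $K$, and both pass between levels $m$ and $m+1$ by an induction along the dyadic subintervals using ${}^DK^{(m)}_{s+j2^{-m},s+(j+1)2^{-m}}=\mathfrak{p}\bigl({}^DK^{(m+1)}_{s+j2^{-m},s+(j+1)2^{-m}}\bigr)$. The only organizational difference is that you isolate the ``key observation'' once and reuse it, whereas the paper carries out the Fubini computation inline twice (for the one-step comparison and for ${}^DK^{(n)}_{s,t}$ versus $\mathfrak{p}({}^DK^{(n)}_{s,t})$); the underlying ideas and the independence bookkeeping are the same.
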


\begin{proof} Denote by $\Pi$ the distribution of $\mathcal{M}$ in $\mathcal{P}(M).$   
%
We show that  $\mathcal{M}{}^DK^{(n)}_{s,t}=\mathcal{M}{}^DK^{(n+1)}_{s,t}$ a.s. 
For every $x\in M$ statements (1) and (2) of Proposition \ref{prp:properties_of_K} imply

$$
\begin{aligned}
{}^DK^{(n)}_{s,s+2^{-n}}(\omega)(x)&=i(\omega^n_{s2^n})(x)=i\circ e\left( i(\omega^{n+1}_{s2^{n+1}})i(\omega^{n+1}_{s2^{n+1}+1})\right)(x)\\
&=\mathfrak{p}\left(\Phi_2\left(\omega^{n+1}_{s2^{n+1}},\omega^{n+1}_{s2^{n+1}+1}\right)\right)(x)=\mathfrak{p}\left({}^DK^{(n+1)}_{s,s+2^{-n}}(\omega)\right)(x)\\
&={}^D K^{(n+1)}_{s,s+2^{-n}}(\omega)(x)\ \mbox{ a.s.}
\end{aligned}
$$
We note that 
$$
\begin{aligned}
{}^DK^{(n+1)}_{s,s+2^{-n}}(\omega)(x)&={}^DK^{(n+1)}_{s,s+2^{-n-1}}(\omega){}^DK^{(n+1)}_{s+2^{-n-1},s+2^{-n}}(\omega)(x)\\
&=\mathfrak{p}\left({}^DK^{(n+1)}_{s,s+2^{-n-1}}(\omega)\right) \mathfrak{p}\left({}^DK^{(n+1)}_{s+2^{-n-1},s+2^{-n}}(\omega)\right)(x)\\
&=\mathfrak{p}\left({}^DK^{(n+1)}_{s,s+2^{-n-1}}(\omega)\right)\mathfrak{p}\circ \mathfrak{p}\left({}^DK^{(n+1)}_{s+2^{-n-1},s+2^{-n}}(\omega)\right)(x).
\end{aligned}
$$
Mappings 
$$
(x,K_1,K_2,K_3)\mapsto 1_{\mathfrak{p}(K_1)(x)= \mathfrak{p}(K_2)\mathfrak{p}\circ \mathfrak{p}(K_3)(x)}
$$ 
and 
$$
(\mu,K_1,K_2,K_3)\mapsto \mu\{x: \mathfrak{p}(K_1)(x)= \mathfrak{p}(K_2)\mathfrak{p}\circ \mathfrak{p}(K_3)(x)\}
$$
are measurable. 
Fubini's theorem implies
$$
\begin{aligned}
\mathsf{E}&\mathcal{M}\{x: {}^DK^{(n)}_{s,s+2^{-n}}(x)={}^DK^{(n+1)}_{s,s+2^{-n}}(x)\}\\
&=\int_{\mathcal{P}(M)}\mathsf{E}\mu\{x: {}^DK^{(n)}_{s,s+2^{-n}}(x)={}^DK^{(n+1)}_{s,s+2^{-n}}(x)\}\Pi(d\mu)\\
&=\int_{\mathcal{P}(M)}\int_M \mathsf{P}\left({}^DK^{(n)}_{s,s+2^{-n}}(x)={}^DK^{(n+1)}_{s,s+2^{-n}}(x)\right)\mu(dx)\Pi(d\mu)=1.
\end{aligned}
$$
It follows that a.s. for $\mathcal{M}$-a.a. $x\in M,$
$$
{}^DK^{(n)}_{s,s+2^{-n}}(x)={}^DK^{(n+1)}_{s,s+2^{-n}}(x),
$$
and a.s.
$$
\mathcal{M}{}^DK^{(n)}_{s,s+2^{-n}}=\mathcal{M}{}^DK^{(n+1)}_{s,s+2^{-n}}.
$$

Assume the result is proved for $s+(k-1)2^{-n}=t-2^{-n}.$ Statement (5) of Proposition \ref{prp:properties_of_K} implies that
$$
{}^DK^{(n)}_{s,t}={}^DK^{(n)}_{s,t-2^{-n}}{}^DK^{(n)}_{t-2^{-n},t}, \ {}^DK^{(n+1)}_{s,t}={}^DK^{(n+1)}_{s,t-2^{-n}}{}^DK^{(n+1)}_{t-2^{-n},t}.
$$

By inductive hypothesis, a.s. 
$$
\begin{aligned}
\mathcal{M}{}^DK^{(n)}_{s,t}&=\left(\mathcal{M}{}^DK^{(n)}_{s,t-2^{-n}}\right){}^DK^{(n)}_{t-2^{-n},t}=\left(\mathcal{M}{}^DK^{(n+1)}_{s,t-2^{-n}}\right){}^DK^{(n)}_{t-2^{-n},t}\\
&=\left(\mathcal{M}{}^DK^{(n+1)}_{s,t-2^{-n}}\right){}^DK^{(n+1)}_{t-2^{-n},t}=\mathcal{M}{}^DK^{(n+1)}_{s,t}.
\end{aligned}
$$
Here we used independency of $\mathcal{M}{}^DK^{(n+1)}_{s,t-2^{-n}}$ from $\mathcal{A}_{t-2^{-n},t},$ which follows from the representation 
$$
\mathcal{M}{}^DK^{(n+1)}_{s,t-2^{-n}}=\mathcal{M}\mathfrak{p}\left({}^DK^{(n+1)}_{s,s+2^{-n-1}}\right)\ldots\mathfrak{p}\left({}^DK^{(n+1)}_{t-3\times 2^{-n-1},t-2^{-n}}\right).
$$

Mappings 
$$
(x,K_1,\ldots, K_{(t-s)2^n})\mapsto 1_{\mathfrak{p}(K_1)\ldots\mathfrak{p}(K_{(t-s)2^n})(x)=\mathfrak{p}\left( \mathfrak{p}(K_1)\ldots\mathfrak{p}(K_{(t-s)2^n})\right)(x)}
$$ 
and 
$$
(\mu,K_1,\ldots,K_{(t-s)2^n})\mapsto \mu\{x: \mathfrak{p}(K_1)\ldots\mathfrak{p}(K_{(t-s)2^n})(x)=\mathfrak{p}\left( \mathfrak{p}(K_1)\ldots\mathfrak{p}(K_{(t-s)2^n})\right)(x)\}
$$
are measurable. 
Substituting $K_j={}^D K^{(n)}_{s+(j-1)2^{-n},s+j2^{-n}},$ $1\leq j\leq (t-s)2^n,$ and using Fubini's theorem,  we get  
$$
\begin{aligned}
\mathsf{E}&\mathcal{M}\{x: {}^DK^{(n)}_{s,t}(x)=\mathfrak{p}\left({}^DK^{(n)}_{s,t}\right)(x)\}\\
&=\int_{\mathcal{P}(M)}\mathsf{E}\mu\left\{x: {}^DK^{(n)}_{s,t}(x)=\mathfrak{p}\left({}^DK^{(n)}_{s,t}\right)(x)\right\}\Pi(d\mu)\\
&=\int_{\mathcal{P}(M)}\int_M \mathsf{P}\left({}^DK^{(n)}_{s,t}(x)=\mathfrak{p}\left({}^DK^{(n)}_{s,t}\right)(x)\right)\mu(dx)\Pi(d\mu)=1.
\end{aligned}
$$
It follows that a.s. for $\mathcal{M}$-a.a. $x\in M,$
$$
{}^DK^{(n)}_{s,t}(x)=\mathfrak{p}\left({}^DK^{(n)}_{s,t}\right)(x),
$$
and a.s.
$$
\mathcal{M}{}^DK^{(n)}_{s,t}=\mathcal{M}\mathfrak{p}\left({}^DK^{(n)}_{s,t}\right).
$$
This finishes the proof of the Proposition.

\end{proof}

%
%
%

\begin{rem}
\label{rem:1.6.5} Let $(r,s,t)\in D^3,$ $r\leq s\leq t.$ For all $\mathcal{P}(M)$-valued random elements $\mathcal{M}$ independent from $\mathcal{A}_{r,t},$
$$
\mathcal{M} {}^DK_{r,t}=\mathcal{M}{}^DK_{r,s}{}^DK_{s,t} \ \mbox{ a.s.}
$$

\end{rem}

\begin{lem}
\label{lem:1.6.6_measures} Let $f\in C(\mathcal{P}(\hat{M})^2).$ For any compact $\mathcal{C}\subset \mathcal{P}(M)$ the function 
$$
(s,t,u,v,\mu,\nu)\mapsto \mathsf{E}f\left(\mu{}^DK_{s,t},\nu{}^DK_{u,v}\right)
$$
can be continuously extended to $\{(s,t)\in \mathbb{R}^2: s\leq t\}^2\times \mathcal{C}^2.$

\end{lem}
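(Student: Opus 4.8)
The plan is to check that the function in question, which is defined on the dense subset $\{(s,t)\in D^2:s\le t\}^2\times\mathcal{C}^2$ of $\{(s,t)\in\mathbb{R}^2:s\le t\}^2\times\mathcal{C}^2$, is uniformly continuous on bounded subsets of its domain; a uniformly continuous function on a dense subset of a metric space has a unique continuous extension to the closure, which is the assertion. Since $\mathcal{P}(\hat{M})^2$ is compact and $f$ is bounded and uniformly continuous, this reduces further to showing that the map $(s,t,\mu)\mapsto\mu\,{}^DK_{s,t}$, with values in the complete metric space $L^0(\Omega;\mathcal{P}(\hat{M}))$ equipped with $d_{L^0}(X,Y)=\mathsf{E}[1\wedge\hat{d}(X,Y)]$, is uniformly continuous on bounded subsets of $\{(s,t)\in D^2:s\le t\}\times\mathcal{C}$: if $\mu_n\,{}^DK_{s_n,t_n}$ and $\nu_n\,{}^DK_{u_n,v_n}$ converge in probability, then so do the corresponding pairs in $\mathcal{P}(\hat{M})^2$, hence so do the values of $f$, hence so do their expectations.

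I would assemble this from two elementary estimates. First, a second-moment bound for changing the base measure: for $g\in C_0(M)$, $h\ge0$ and a kernel $K$ of law $\nu_h$, relation \eqref{eq:FCS_to_CSCFTF_kernels} together with Fubini's theorem expresses $\mathsf{E}|\mu Kg-\mu'Kg|^2$ as a combination of the numbers $\int_{M^2}\mathsf{P}^{(2)}_h(g\otimes g)\,d(\lambda_1\otimes\lambda_2)$, $\lambda_i\in\{\mu,\mu'\}$; since $\mathsf{P}^{(2)}_h(g\otimes g)\in C_0(M^2)$ by the Feller property and $h\mapsto\mathsf{P}^{(2)}_h(g\otimes g)$, $(\lambda_1,\lambda_2)\mapsto\lambda_1\otimes\lambda_2$ are continuous, this combination is jointly continuous in $(h,\mu,\mu')$ and vanishes for $\mu=\mu'$, so it tends to $0$ as $d(\mu,\mu')\to0$, uniformly for $h\le T$ and $\mu,\mu'\in\mathcal{C}$ (compactness). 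Summing over a suitable countable family $(g_k)\subset C_0(M)$ generating the weak topology then controls $d_{L^0}(\mu\,{}^DK_{s,t},\mu'\,{}^DK_{s,t})$. Second, a short-interval bound: if $\mathcal{N}$ is any $\mathcal{P}(M)$-valued random element independent of a kernel $L$ of law $\nu_h$, then for $g\in C_0(M)$ one has $\mathsf{E}|\mathcal{N}Lg-\mathcal{N}g|\le\mathsf{E}_{\mathcal{N}}\int_M\bigl(\int_E|Kg(x)-g(x)|\,\nu_h(dK)\bigr)\mathcal{N}(dx)$, and the inner integral is at most $2\|g\|\sup_x\nu_h\{K:|Kg(x)-g(x)|\ge\eta\}+\eta$, which tends to $\eta$ by condition (2) of Definition \ref{def:FCS_kernel}; hence $\mathsf{E}\hat{d}(\mathcal{N}L,\mathcal{N})\to0$ as $h\to0$, uniformly over the law of $\mathcal{N}$.

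With these in hand, for dyadic $(s,t),(s',t')$ lying in a bounded box and close, and $\mu,\mu'\in\mathcal{C}$ with $d(\mu,\mu')$ small, assume first that the limiting interval is nondegenerate, so that $\underline a:=\min(s,s')$ and $\overline b:=\max(t,t')$ are dyadic with $[\underline a,\overline b]\supseteq[s,t]\cup[s',t']$. I would interpolate along $\mu\,{}^DK_{s,t}$, $\mu\,{}^DK_{\underline a,\overline b}$, $\mu'\,{}^DK_{\underline a,\overline b}$, $\mu'\,{}^DK_{s',t'}$: the middle step is the first estimate, and an outer step, say from $\mu\,{}^DK_{\underline a,\overline b}$ to $\mu\,{}^DK_{s,t}$, is treated by writing (Remark \ref{rem:1.6.5}) $\mu\,{}^DK_{\underline a,\overline b}=\mu\,{}^DK_{\underline a,s}\,{}^DK_{s,t}\,{}^DK_{t,\overline b}$ a.s.\ with independent factors and splitting, through $\mu\,{}^DK_{\underline a,t}=(\mu\,{}^DK_{\underline a,s})\,{}^DK_{s,t}$, into $\hat{d}\bigl((\mu\,{}^DK_{\underline a,t})\,{}^DK_{t,\overline b},\mu\,{}^DK_{\underline a,t}\bigr)$, handled by the second estimate with $h=\overline b-t$, plus $\hat{d}(\mu\,{}^DK_{\underline a,t},\mu\,{}^DK_{s,t})$. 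For the latter I would run the second-moment computation again with $\mathcal{M}=\mu\,{}^DK_{\underline a,s}$ independent of ${}^DK_{s,t}$; the only new ingredient is the cross term $\mathsf{E}(\mu\,{}^DK_{\underline a,t}g)(\mu\,{}^DK_{s,t}g)=\int_{M^2}\mathsf{P}^{(2)}_{t-s}(g\otimes g)\,d\bigl((\mu\mathsf{P}^{(1)}_{s-\underline a})\otimes\mu\bigr)$, which converges to $\int_{M^2}\mathsf{P}^{(2)}_{t-s}(g\otimes g)\,d(\mu\otimes\mu)$ because $\mu\mathsf{P}^{(1)}_{s-\underline a}\to\mu$ weakly, so that the resulting $L^2$-bound again vanishes uniformly on compacts. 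The degenerate case, where the limiting interval collapses to a point, reduces via ${}^DK_{s,s}=\mathrm{id}$ to the short-interval estimate together with the uniform equivalence of $d$ and $\hat{d}$ on the compact set $\mathcal{C}$.

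The step I expect to be the main obstacle is precisely the perturbation of the left endpoint of an interval. A direct approach fails here because for a fixed measurable kernel $K$ the map $\mu\mapsto\mu\mathfrak{p}(K)$ need not be weakly continuous, and condition (2) of Definition \ref{def:FCS_kernel} only controls a near-identity kernel acting on the \emph{right} of a measure, not the kernel ${}^DK_{\underline a,s}$ acting on the left. The remedy, as above, is to pass to second moments, so that everything is tested against the genuinely continuous function $\mathsf{P}^{(2)}_{t-s}(g\otimes g)\in C_0(M^2)$ — which is exactly where the Feller property of the two-point motion is indispensable — and to use Remark \ref{rem:1.6.5} to relocate the perturbation onto the near-identity factor $\mathsf{P}^{(1)}_{s-\underline a}$, whose action on $\mu$ is controlled by the strong continuity of $(\mathsf{P}^{(1)}_t)$. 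A parallel bookkeeping burden is to keep every estimate uniform over the bounded box and over $\mathcal{C}$, which throughout is obtained by observing that the relevant functions of the time and measure variables are jointly continuous and vanish on the appropriate diagonals, and then restricting to a compact set.
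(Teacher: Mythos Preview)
Your approach is correct and genuinely different from the paper's. The paper proceeds by reducing, via Stone--Weierstrass, to $f(\mu,\nu)=g(\mu)h(\nu)$ with $g,h\in\mathbb{A}_1(\hat M)$; it then writes out $\mathsf{E}\,g(\mu\,{}^DK_{s,t})h(\nu\,{}^DK_{u,v})$ as an iterated integral against $\mathsf{P}^{(N)}$ and $\mathsf{P}^{(2N)}$ and performs an explicit six-case analysis according to the relative position of $[s,t]$ and $[u,v]$, checking in each region that the resulting formula is continuous and that the formulas match on the common boundaries. A final truncation argument (multiplying by $\zeta_j^{\otimes N}$ and using Lemma~\ref{lem:Feller_property} in the guise of tightness of $\Pi^{(1)}_t(x)$) passes from compactly supported test functions to all of $C(\hat M^N)$.

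Your route bypasses both the Stone--Weierstrass reduction and the case split: you lift the problem to the $L^0(\Omega;\mathcal{P}(\hat M))$-valued map $(s,t,\mu)\mapsto\mu\,{}^DK_{s,t}$ and establish its uniform continuity on bounded sets by a single triangle-inequality interpolation through the enlarged interval $[\underline a,\overline b]$, the key inputs being the second-moment identity $\mathsf{E}\bigl(\lambda_1 K g\cdot\lambda_2 K g\bigr)=\int\mathsf{P}^{(2)}_h(g\otimes g)\,d(\lambda_1\otimes\lambda_2)$, the independence of increments (Lemma~\ref{lem:sigma_fields_A_st} and Remark~\ref{rem:1.6.5}), and property~(2) of Definition~\ref{def:FCS_kernel}. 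One small point to make explicit: in the left-endpoint perturbation the ``diagonal'' term is $\int\mathsf{P}^{(2)}_{t-\underline a}(g\otimes g)\,d(\mu\otimes\mu)$ (via the two-point motion over $[\underline a,s]$), not merely a second cross term; your compactness/continuity argument still handles it. What the paper's computation buys is an explicit closed formula for the extension $F$; what your argument buys is a cleaner, case-free proof that also makes transparent why only $\mathsf{P}^{(1)}$ and $\mathsf{P}^{(2)}$ enter.
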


\begin{proof} By the Stone-Weierstrass theorem it is enough to consider functions $f\in C(\mathcal{P}(\hat{M})^2)$ of the form $f(\mu,\nu)=g(\mu)h(\nu),$ where $g,h\in \mathbb{A}_1(\hat{M}),$
$$
g(\varkappa)=\int_{\hat{M}^N}a(z)\varkappa^{\otimes N}(dz), \ h(\varkappa)=\int_{\hat{M}^N}b(z)\varkappa^{\otimes N}(dz),
$$
and $a,b\in C(\hat{M}^N).$ 

At first we consider the case when $a$ and $b$ have compact supports in $M^N,$ in particular  $a,b\in C_0(M^N).$ We will prove that there exists a continuous function $F:\{(s,t)\in \mathbb{R}^2: s\leq t\}^2\times \mathcal{P}(M)^2\to \mathbb{R},$ such that for all $(s,t,u,v,\mu,\nu)\in D^4\times \mathcal{P}(M)^2,$ $s\leq t,$ $u\leq v,$
$$
F(s,t,u,v,\mu,\nu)= \mathsf{E}f\left( \mu{}^DK_{s,t},\nu {}^DK_{u,v}\right)=\mathsf{E}g\left(\mu {}^DK_{s,t}\right)h\left( \nu{}^DK_{u,v}\right).
$$
Denote 
$$
\begin{aligned}
H(s,t,u,v,\mu,\nu)&=\mathsf{E}g\left(\mu {}^DK_{s,t}\right)h\left( \nu{}^DK_{u,v}\right)\\
&=\mathsf{E}\int_{M^N}{}^DK^{\otimes N}_{s,t}a (x)\mu^{\otimes N}(dx)\int_{M^N}{}^DK^{\otimes N}_{u,v}b(y)\nu^{\otimes N}(dy)\\
&=\int_{M^N}\int_{M^N}\left[\mathsf{E}{}^DK^{\otimes N}_{s,t}a (x){}^DK^{\otimes N}_{u,v}b(y)\right]\mu^{\otimes N}(dx)\nu^{\otimes N}(dy).
\end{aligned}
$$ 
We evaluate the function $H$ for different displacements of $(s,t,u,v).$

{\it Case 1:} $s\leq t\leq u\leq v.$
$$
\begin{aligned}
H(s,t,u,v,\mu,\nu)&=\int_{M^N}\int_{M^N}\left[\mathsf{E}{}^DK_{s,t}^{\otimes N}a(x){}^DK_{u,v}^{\otimes N}b(y)\right]\mu^{\otimes N}(dx)\nu^{\otimes N}(dy)\\
&=\int_{M^N}\int_{M^N}\mathsf{P}^{(N)}_{t-s}a(x)\mathsf{P}^{(N)}_{v-u}b(y)\mu^{\otimes N}(dx)\nu^{\otimes N}(dy).
\end{aligned}
$$
The right-hand side is continuous on the set $\{(s,t,u,v,\mu,\nu)\in \mathbb{R}^4\times \mathcal{P}(M)^2: s\leq t\leq u\leq v\}.$

{\it Case 2:} $s\leq u\leq t \leq v.$
$$
\begin{aligned}
H(s,t,u,v,\mu,\nu)&=\int_{M^N}\int_{M^N}\left[\mathsf{E} {}^DK^{\otimes N}_{s,u}{}^DK^{\otimes N}_{u,t}a(x){}^DK^{\otimes N}_{u,t}{}^DK^{\otimes N}_{t,v}b(y)\right]\mu^{\otimes N}(dx)\nu^{\otimes N}(dy)\\
&=\int_{M^N}\int_{M^N}\mathsf{P}^{(N)}_{u-s}\left[\mathsf{P}^{(2N)}_{t-u}\left(a\otimes \mathsf{P}^{(N)}_{v-t}b\right)(\cdot,y)\right](x)\mu^{\otimes N}(dx)\nu^{\otimes N}(dy).
\end{aligned}
$$
We check that the right-hand side is continuous on the set $\{(s,t,u,v,\mu,\nu)\in \mathbb{R}^4\times \mathcal{P}(M)^2: s\leq u\leq t\leq v\}.$ Let $(s_n,t_n,u_n,v_n,\mu_n,\nu_n)\to (s,t,u,v,\mu,\nu),$ $s_n\leq u_n\leq t_n\leq v_n.$ Denote $G_{t,u,v}(x,y)=\mathsf{P}^{(2N)}_{t-u}\left(a\otimes \mathsf{P}^{(N)}_{v-t}b\right)(x,y)\in C_0(M^{2N}),$ where $x,y\in M^N.$ 
We have uniform estimates 
$$
\begin{aligned}
\ & \sup_{(x,y)\in M^{2N}}\left|G_{t_n,u_n,v_n}(x,y)-G_{t,u,v}(x,y)\right|\\
& \leq \sup_{(x,y)\in M^{2N}}\left|\mathsf{P}^{(2N)}_{t_n-u_n}\left(a\otimes \mathsf{P}^{(N)}_{v_n-t_n}b\right)(x,y)-\mathsf{P}^{(2N)}_{t_n-u_n}\left(a\otimes \mathsf{P}^{(N)}_{v-t}b\right)(x,y)\right|\\
& \ \ \ \ \ +  \sup_{(x,y)\in M^{2N}}\left|\mathsf{P}^{(2N)}_{t_n-u_n}\left(a\otimes \mathsf{P}^{(N)}_{v-t}b\right)(x,y)-\mathsf{P}^{(2N)}_{t-u}\left(a\otimes \mathsf{P}^{(N)}_{v-t}b\right)(x,y)\right|\\
&\leq \|a\|\|\mathsf{P}^{(N)}_{v_n-t_n}b-\mathsf{P}^{(N)}_{v-t}b\|+\|\mathsf{P}^{2N}_{t_n-u_n}\left(a\otimes \mathsf{P}^{(N)}_{v-t}b\right)-\mathsf{P}^{2N}_{t-u}\left(a\otimes \mathsf{P}^{(N)}_{v-t}b\right)\|;
\end{aligned}
$$
$$
\begin{aligned}
& \sup_{(x,y)\in M^{2N}}\left|\mathsf{P}^{(N)}_{u_n-s_n}\left[G_{t_n,u_n,v_n}(\cdot,y)\right](x)-\mathsf{P}^{(N)}_{u-s}\left[G_{t,u,v}(\cdot,y)\right](x)\right|\\
& \leq \|a\|\|\mathsf{P}^{(N)}_{v_n-t_n}b-\mathsf{P}^{(N)}_{v-t}b\|+\|\mathsf{P}^{2N}_{t_n-u_n}\left(a\otimes \mathsf{P}^{(N)}_{v-t}b\right)-\mathsf{P}^{2N}_{t-u}\left(a\otimes \mathsf{P}^{(N)}_{v-t}b\right)\|\\
& \ \ \ \ \ +\|(\mathsf{P}^{(N)}_{u_n-s_n}\otimes I)G_{t,u,v}-(\mathsf{P}^{(N)}_{u-s}\otimes I)G_{t,u,v}\|,
\end{aligned}
$$
where $I$ is the identity operator on $C_0(M^N).$ Finally,
$$
\begin{aligned}
& \left|H(s_n,t_n,u_n,v_n,\mu_n,\nu_n)-H(s,t,u,v,\mu,\nu)\right|\\
& =\bigg|\int_{M^N}\int_{M^N}\mathsf{P}^{(N)}_{u_n-s_n}\left[G_{t_n,u_n,v_n}(\cdot,y)\right](x)\mu^{\otimes N}_n(dx)\nu^{\otimes N}_n(dy) \\
& \ \ \ \ \ -\int_{M^N}\int_{M^N}\mathsf{P}^{(N)}_{u-s}\left[G_{t,u,v}(\cdot,y)\right](x)\mu^{\otimes N}(dx)\nu^{\otimes N}(dy)\bigg|\\
& \leq \|a\|\|\mathsf{P}^{(N)}_{v_n-t_n}b-\mathsf{P}^{(N)}_{v-t}b\|+\|\mathsf{P}^{2N}_{t_n-u_n}\left(a\otimes \mathsf{P}^{(N)}_{v-t}b\right)-\mathsf{P}^{2N}_{t-u}\left(a\otimes \mathsf{P}^{(N)}_{v-t}b\right)\|\\
& \ \ \ \ \ +\|(\mathsf{P}^{(N)}_{u_n-s_n}\otimes I)G_{t,u,v}-(\mathsf{P}^{(N)}_{u-s}\otimes I)G_{t,u,v}\|\\
& \ \ \ \ \ + \bigg|\int_{M^N}\int_{M^N}\mathsf{P}^{(N)}_{u-s}\left[G_{t,u,v}(\cdot,y)\right](x)\mu^{\otimes N}_n(dx)\nu^{\otimes N}_n(dy)\\
& \ \ \ \ \ -\int_{M^N}\int_{M^N}\mathsf{P}^{(N)}_{u-s}\left[G_{t,u,v}(\cdot,y)\right](x)\mu^{\otimes N}(dx)\nu^{\otimes N}(dy)\bigg|\to 0, \ n\to\infty.
\end{aligned}
$$

{\it Case 3:} $s\leq u\leq v \leq t.$
$$
\begin{aligned}
H(s,t,u,v,\mu,\nu)&=\int_{M^N}\int_{M^N}\left[\mathsf{E} {}^DK^{\otimes N}_{s,u}{}^DK^{\otimes N}_{u,v} {}^DK^{\otimes N}_{v,t}a(x){}^DK^{\otimes N}_{u,v}b(y)\right]\mu^{\otimes N}(dx)\nu^{\otimes N}(dy)\\
&=\int_{M^N}\int_{M^N}\mathsf{P}^{(N)}_{u-s}\left[\mathsf{P}^{(2N)}_{v-u}\left(\mathsf{P}^{(N)}_{t-v}a\otimes b\right)(\cdot,y)\right](x)\mu^{\otimes N}(dx)\nu^{\otimes N}(dy).
\end{aligned}
$$
Similarly to the Case 2 we get continuity of the right-hand side on the set $\{(s,t,u,v,\mu,\nu)\in \mathbb{R}^4\times \mathcal{P}(M)^2: s\leq u\leq v\leq t\}.$
%

{\it Case 4:} $u\leq v\leq s\leq t$  is identical to the Case 1.
$$
H(s,t,u,v,\mu,\nu)=\int_{M^N}\int_{M^N}\mathsf{P}^{(N)}_{t-s}a(x)\mathsf{P}^{(N)}_{v-u}b(y)\mu^{\otimes N}(dx)\nu^{\otimes N}(dy).
$$

{\it Case 5:} $u\leq s\leq v \leq t$ is identical to the Case 2.
$$
H(s,t,u,v,\mu,\nu)=\int_{M^N}\int_{M^N}\mathsf{P}^{(N)}_{s-u}\left[\mathsf{P}^{(2N)}_{v-s}\left( \mathsf{P}^{(N)}_{t-v}a\otimes b\right)(x,\cdot)\right](y)\mu^{\otimes N}(dx)\nu^{\otimes N}(dy).
$$

{\it Case 6:} $u\leq s\leq t \leq v$ is identical to the Case 3.
$$
H(s,t,u,v,\mu,\nu)=\int_{M^N}\int_{M^N}\mathsf{P}^{(N)}_{s-u}\left[\mathsf{P}^{(2N)}_{t-s}\left(a\otimes \mathsf{P}^{(N)}_{v-t}b\right)(x,\cdot)\right](y)\mu^{\otimes N}(dx)\nu^{\otimes N}(dy).
$$

We note that the function $F(s,t,u,v,\mu,\nu)=$
$$
=\begin{cases}
\int_{M^N}\int_{M^N}\mathsf{P}^{(N)}_{t-s}a(x)\mathsf{P}^{(N)}_{v-u}b(y)\mu^{\otimes N}(dx)\nu^{\otimes N}(dy), \ s\leq t\leq u\leq v,\\
\int_{M^N}\int_{M^N}\mathsf{P}^{(N)}_{u-s}\left[\mathsf{P}^{(2N)}_{t-u}\left(a\otimes \mathsf{P}^{(N)}_{v-t}b\right)(\cdot,y)\right](x)\mu^{\otimes N}(dx)\nu^{\otimes N}(dy), \ s\leq u\leq t \leq v,\\
\int_{M^N}\int_{M^N}\mathsf{P}^{(N)}_{u-s}\left[\mathsf{P}^{(2N)}_{v-u}\left(\mathsf{P}^{(N)}_{t-v}a\otimes b\right)(\cdot,y)\right](x)\mu^{\otimes N}(dx)\nu^{\otimes N}(dy), \ s\leq u\leq v \leq t,\\
\int_{M^N}\int_{M^N}\mathsf{P}^{(N)}_{t-s}a(x)\mathsf{P}^{(N)}_{v-u}b(y)\mu^{\otimes N}(dx)\nu^{\otimes N}(dy), \ u\leq v\leq s\leq t,\\
\int_{M^N}\int_{M^N}\mathsf{P}^{(N)}_{s-u}\left[\mathsf{P}^{(2N)}_{v-s}\left( \mathsf{P}^{(N)}_{t-v}a\otimes b\right)(x,\cdot)\right](y)\mu^{\otimes N}(dx)\nu^{\otimes N}(dy), \ u\leq s\leq v \leq t, \\
\int_{M^N}\int_{M^N}\mathsf{P}^{(N)}_{s-u}\left[\mathsf{P}^{(2N)}_{t-s}\left(a\otimes \mathsf{P}^{(N)}_{v-t}b\right)(x,\cdot)\right](y)\mu^{\otimes N}(dx)\nu^{\otimes N}(dy), \ u\leq s\leq t \leq v
\end{cases}
$$
is well-defined. Hence, $F$ is continuous on its domain and gives a continuous extension of $H.$ 

Consider the general case $a,b\in C(\hat{M}^N).$ Recall exhaustive sequences $(L_j: j\geq 1),$ $(\zeta_j: j\geq 1)$ introduced in the beginning of Section \ref{sec:def}. Let $a_j=a\times \zeta_j^{\otimes N},$ $b_j=b\times \zeta_j^{\otimes N},$
$$
g_j(\varkappa)=\int_{M^N} a_j(z)\varkappa^{\otimes N}(dz), \ h_j(\varkappa)=\int_{M^N} b_j(z)\varkappa^{\otimes N}(dz).
$$
There exists a continuous function $F_j:\{(s,t)\in \mathbb{R}^2: \ s\leq t\}^2\times \mathcal{P}(M)^2\to \mathbb{R},$ such that 
$$
F_j(s,t,u,v,\mu,\nu)=\mathsf{E} g_j(\mu{}^DK_{s,t})h_j(\nu{}^DK_{u,v})
$$
for $(s,t,u,v,\mu,\nu)\in D^4\times \mathcal{P}(M)^2,$ $s\leq t,$ $u\leq v.$  

Fix $\varepsilon>0,$ $T>0$ and a compact set  $\mathcal{C}\subset \mathcal{P}(M).$ There exists compact $C\subset M,$ such that 
$$
\inf_{\varkappa\in \mathcal{C}}\varkappa(C)\geq 1-\varepsilon.
$$

Continuity of the mapping $(t,z)\mapsto \Pi^{(1)}_t(z)\in \mathcal{P}(\mathcal{P}(M))$ implies that there exists a compact $L\subset M,$ such that \cite[Ch. II, Th. 6.7]{Parthasarathy}
$$
\inf_{t\in [0,2T], z\in C}\Pi^{(1)}_t(z,\{\varkappa:\ \varkappa(L)\geq 1-\varepsilon\})\geq 1-\varepsilon.
$$
If $L\subset L_j,$ we estimate for all $(s,t,u,v,\mu,\nu)\in (D\cap [-T,T])^4\times \mathcal{C}^2,$ $s\leq t,$ $u\leq v:$
$$
\begin{aligned}
\ & \left|\mathsf{E} g(\mu{}^DK_{s,t})h(\nu{}^DK_{u,v})-F_j(s,t,u,v,\mu,\nu)\right|\\
&\leq \|a\|\mathsf{E}\int_{M^N}{}^DK^{\otimes ^N}_{u,v}\left[|b|(1-\zeta^{\otimes N}_j) \right](y)\nu^{\otimes N}(dy)\\
& \ \ \ \ +
\|b\|\mathsf{E}\int_{M^N}{}^DK^{\otimes ^N}_{s,t}\left[|a|(1-\zeta^{\otimes N}_j) \right](x)\mu^{\otimes N}(dx)\\
&\leq  \|a\| \|b\| \left(2-\mathsf{E}\left(\int_{M}{}^DK_{s,t}\zeta_j(x)\mu(dx)\right)^N-\mathsf{E}\left(\int_{M}{}^DK_{u,v}\zeta_j(y)\nu(dy)\right)^N\right)\\
&\leq \|a\|\|b\|\left(2-\left(\mathsf{E}\int_{M}{}^DK_{s,t}\zeta_j(x)\mu(dx)\right)^N-\left(\mathsf{E}\int_{M}{}^DK_{u,v}\zeta_j(y)\nu(dy)\right)^N\right)\\
&\leq 2 \|a\|\|b\|\left(1-(1-\varepsilon)^{3N}\right),
\end{aligned}
$$
where the last inequality follows from relations
$$
\begin{aligned}
\mathsf{E}\int_{M}{}^DK_{s,t}\zeta_j(x)\mu(dx)& \geq\int_{C}\mathsf{E} {}^DK_{s,t}\zeta_j(x)\mu(dx) \geq \int_C \mathsf{E}{}^DK_{s,t}(x,L)\mu(dx)\\
&=\int_C \int_{\mathcal{P}(M)}\varkappa(L)\Pi^{(1)}_{t-s}(x,d\varkappa)\mu(dx)\geq (1-\varepsilon)^3.
\end{aligned}
$$
It follows that the function $F$ is uniformly continuous on $\{(s,t)\in (D\cap [-T,T])^2: \ s\leq t\}^2\times \mathcal{C}^2$ and can be continuously extended to $\{(s,t)\in [-T,T]^2: \ s\leq t\}^2\times \mathcal{C}^2.$

\end{proof}

\begin{prp}
\label{prp:approximation} For any $T> 0,$ compact $\mathcal{C}\subset \mathcal{P}(M)$ and $\varepsilon>0$ there exists $\delta>0$ such that for all $(\mu,\nu)\in \mathcal{C}^2,$ $((s,t),(u,v))\in (D\cap [-T,T])^4$ with  $s\leq t,$ $ u\leq v,$ $|s-u|\leq \delta$, $|t-v|\leq \delta,$ $d(\mu,\nu)\leq \delta,$
$$
\mathsf{P}\{\hat{d}( \mu{}^DK_{s,t},\nu{}^DK_{u,v})\geq \varepsilon\}\leq \varepsilon.
$$
\end{prp}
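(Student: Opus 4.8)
The plan is to apply Lemma~\ref{lem:1.6.6_measures} to a single test function --- the metric $\hat{d}$ itself --- and then to conclude by a compactness argument together with Markov's inequality; in particular no substantial analysis remains at this stage, all of it being already carried out in Lemma~\ref{lem:1.6.6_measures}. Since $\mathcal{P}(\hat{M})$ is compact, $\hat{d}\colon\mathcal{P}(\hat{M})\times\mathcal{P}(\hat{M})\to\mathbb{R}$ is bounded and continuous, i.e.\ $\hat{d}\in C(\mathcal{P}(\hat{M})^2)$; moreover, for dyadic $s\leq t$ one has $\mathfrak{p}({}^DK_{s,t})={}^DK_{s,t}$ (Proposition~\ref{prp:properties_of_K}(1)), so by Lemma~\ref{lem:1.6.4} $\mu{}^DK_{s,t}$ is a genuine $\mathcal{P}(M)$-valued random element and $\hat{d}(\mu{}^DK_{s,t},\nu{}^DK_{u,v})$ is a bona fide random variable. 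Hence Lemma~\ref{lem:1.6.6_measures} supplies a continuous function $H$ on $\{(s,t)\in\mathbb{R}^2:s\leq t\}^2\times\mathcal{C}^2$ with
$$
H(s,t,u,v,\mu,\nu)=\mathsf{E}\,\hat{d}\bigl(\mu{}^DK_{s,t},\nu{}^DK_{u,v}\bigr)
$$
for all $(s,t,u,v)\in D^4$ with $s\leq t$, $u\leq v$, and all $(\mu,\nu)\in\mathcal{C}^2$.

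Next I would restrict $H$ to the compact set $\mathcal{D}_T=\{(s,t)\in[-T,T]^2:s\leq t\}^2\times\mathcal{C}^2$, on which it is uniformly continuous, and verify that $H$ vanishes identically on the diagonal $\Lambda_T=\{(s,t,s,t,\mu,\mu):-T\leq s\leq t\leq T,\ \mu\in\mathcal{C}\}$. Indeed, for dyadic $s\leq t$ in $[-T,T]$ and $\mu\in\mathcal{C}$ we have $H(s,t,s,t,\mu,\mu)=\mathsf{E}\,\hat{d}(\mu{}^DK_{s,t},\mu{}^DK_{s,t})=0$, and such points are dense in $\Lambda_T$ (approximate $s$ and $t$ by dyadic numbers of $[-T,T]$ keeping $s\leq t$, and leave $\mu$ fixed), whence $H\equiv0$ on $\Lambda_T$ by continuity. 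This step --- passing from vanishing at dyadic diagonal points to vanishing on the whole diagonal --- is the only place where a little care is needed.

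Finally, fix $\varepsilon>0$. By uniform continuity of $H$ on $\mathcal{D}_T$ with respect to the product metric $\max\{|s-s'|,|t-t'|,|u-u'|,|v-v'|,d(\mu,\mu'),d(\nu,\nu')\}$, there is $\delta>0$ such that $0\leq H\leq\varepsilon^2$ at every point of $\mathcal{D}_T$ within distance $\delta$ of $\Lambda_T$. If now $(\mu,\nu)\in\mathcal{C}^2$ and $((s,t),(u,v))\in(D\cap[-T,T])^4$ satisfy $s\leq t$, $u\leq v$, $|s-u|\leq\delta$, $|t-v|\leq\delta$, $d(\mu,\nu)\leq\delta$, then $(s,t,u,v,\mu,\nu)$ lies within distance $\delta$ of the point $(s,t,s,t,\mu,\mu)\in\Lambda_T$, so $\mathsf{E}\,\hat{d}(\mu{}^DK_{s,t},\nu{}^DK_{u,v})=H(s,t,u,v,\mu,\nu)\leq\varepsilon^2$ and Markov's inequality gives
$$
\mathsf{P}\bigl\{\hat{d}(\mu{}^DK_{s,t},\nu{}^DK_{u,v})\geq\varepsilon\bigr\}\leq\varepsilon^{-1}\,\mathsf{E}\,\hat{d}(\mu{}^DK_{s,t},\nu{}^DK_{u,v})\leq\varepsilon,
$$
which is the assertion. (If one prefers to avoid invoking the continuity of $\hat{d}$ directly, one can instead run the same argument with the functions $f_k(\alpha,\beta)=(\langle h_k,\alpha\rangle-\langle h_k,\beta\rangle)^2$ for a sequence $(h_k)$ dense in the unit ball of $C(\hat{M})$, using that on the compact space $\mathcal{P}(\hat{M})$ the metric $\hat{d}$ is uniformly equivalent to $\sum_k 2^{-k}|\langle h_k,\cdot\rangle-\langle h_k,\cdot\rangle|$, and combining the resulting estimates by a union bound and Chebyshev's inequality.)
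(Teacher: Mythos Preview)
Your proof is correct and follows essentially the same route as the paper's: both arguments feed a single test function into Lemma~\ref{lem:1.6.6_measures}, use that the resulting continuous function vanishes on the diagonal $\{(s,t,s,t,\mu,\mu)\}$, and then conclude by compactness. The only cosmetic difference is that the paper argues by contradiction with the bump function $f(\varkappa_1,\varkappa_2)=(1-R\hat d_2((\varkappa_1,\varkappa_2),\Delta^c_\varepsilon))_+$ dominating the indicator of $\Delta^c_\varepsilon$, whereas you use $f=\hat d$ directly and finish with Markov's inequality; your version is slightly more streamlined but neither approach offers a real advantage over the other.
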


\begin{proof} Assume this is not true. Then for some $\varepsilon>0$ there exist sequences $-T\leq s_n\leq t_n\leq T,$ $-T\leq u_n\leq v_n\leq T,$ $(\mu_n,\nu_n)\in \mathcal{C}^2,$ such that $|s_n-u_n|\to 0,$ $|t_n-v_n|\to 0,$ $d(\mu_n,\nu_n)\to 0,$ but 
$$
\mathsf{P}\{\hat{d}(\mu_n {}^DK_{s_n,t_n},\nu_n{}^D K_{u_n,v_n})\geq \varepsilon\}\geq  \varepsilon.
$$
We may and do assume that $(s_n,t_n,u_n,v_n,\mu_n,\nu_n)\to (s,t,s,t,\mu,\mu)\in [-T,T]^4\times \mathcal{C}^2.$

Consider closed set
$$
\Delta^c_{\varepsilon}=\{(\varkappa_1,\varkappa_2)\in \mathcal{P}(\hat{M})^2:  \ \hat{d}(\varkappa_1,\varkappa_2)\geq \varepsilon\}
$$ 
and  a function 
$$
f(\varkappa_1,\varkappa_2)=(1-R \hat{d}_2((\varkappa_1,\varkappa_2),\Delta^c_\varepsilon))_+,
$$
where $\hat{d}_2((\varkappa_1,\varkappa_2),(\nu_1,\nu_2))=\hat{d}(\varkappa_1,\nu_1)+\hat{d}(\varkappa_2,\nu_2)$ and  $R>\frac{1}{\varepsilon}.$  Then $f\in C(\mathcal{P}(\hat{M})^2).$  Denote $F(s,t,u,v,\varkappa_1,\varkappa_2)=\mathsf{E}f(\varkappa_1 {}^DK_{s,t},\varkappa_2{}^D K_{u,v}).$ By the Lemma \ref{lem:1.6.6_measures} the function $F$ has a continuous extension on $\{(s,t)\in \mathbb{R}^2:s\leq t\}^2\times \mathcal{C}^2.$ We have

$$
\begin{aligned}
\varepsilon&\leq \mathsf{P}\{(\mu_n{}^DK_{s_n,t_n},\nu_n {}^DK_{u_n,v_n})\in \Delta^c_\varepsilon\}\leq \mathsf{E} f(\mu_n {}^DK_{s_n,t_n},\nu_n {}^D K_{u_n,v_n})\\
&=F(s_n,t_n,u_n,v_n,\mu_n,\nu_n)\to F(s,t,s,t,\mu,\mu), \ n\to\infty.
\end{aligned}
$$
However, 
$$
F(s,t,s,t,\mu,\mu)=\lim_{n\to\infty} F(s_n,t_n,s_n,t_n,\mu,\mu)=0,
$$
since 
$$
\hat{d}_2((\mu {}^DK_{s_n,t_n},\mu{}^D K_{s_n,t_n}),\Delta^c_\varepsilon)\geq \inf_{(\varkappa_1,\varkappa_2)\in \Delta^c_\varepsilon}\hat{d}(\varkappa_1,\varkappa_2)\geq \varepsilon>\frac{1}{R}
$$
and $f(\mu {}^DK_{s_n,t_n},\mu {}^DK_{s_n,t_n})=0.$ Obtained contradiction proves the Proposition.

\end{proof}

\subsection{Stochastic flow of kernels $(K_{s,t}:-\infty<s\leq t<\infty)$}

Proposition \ref{prp:approximation} implies that there exists a strictly increasing sequence of positive  integers $(n_j:j\geq 1),$  such that for each $j\geq 1$ and all $(s,t,u,v,\mu,\nu)\in (D\cap [-j,j])^4\times \mathcal{P}(L_j)^2$ with $s\leq t,$ $u\leq v,$ $|s-u|\leq 2^{-n_j},$  $|t-v|\leq 2^{-n_j},$ $d(\mu,\nu)\leq 2^{-n_j},$ 
$$
\mathsf{P}\{\hat{d}( \mu{}^DK_{s,t}, \nu {}^DK_{u,v})\geq 2^{-j}\}\leq 2^{-j}.
$$
Given $t\in\mathbb{R}$ define $t_j=\max\{s\in D_{n_j}:s\leq t\}.$ We note that $0\leq t-t_j<2^{-n_j},$  and  $s\leq t\Rightarrow s_j\leq t_j.$

%
%

Fix a measurable mapping $\hat{\ell} :\mathcal{P}(\hat{M})^\mathbb{N}\to \mathcal{P}(\hat{M})$ with the following property: for any sequence $\mu=(\mu_n:n\in\mathbb{N})$ in $\mathcal{P}(\hat{M}),$ $\hat{\ell}(\mu)$ is a limit point of $\mu$ \cite[L. 7.1]{Raimond_Riabov}. 
Fix $x_0\in M$ and consider measurable mappings $\Phi:E^\mathbb{N}\to E,$ 
$\Psi:M\times E^\mathbb{N}\to \mathcal{P}(M),$ 
$$
\Phi(K)(x)=\Psi(x,K)=\begin{cases}
\hat{\ell} \left((\mathfrak{p}(K_n)(x): n\geq 1)\right), \ \mbox{ if} \ \hat{\ell} \left((\mathfrak{p}(K_n)(x): n\geq 1)\right)(M)=1\\
\delta_{x_0}, \ \mbox{ otherwise}
\end{cases}.
$$
Now we have everything ready to construct the needed stochastic flow of kernels. We will use random kernels $({}^DK_{s,t}:-\infty<s\leq t<\infty, \ (s,t)\in D^2)$ constructed in the Section \ref{subsec:SFK_dyadic}.

%

%
%
%
%
%

\begin{dfn}
\label{def:SFK}
For real $s\leq t$  we define random kernels
$$
\tilde{K}_{s,t}=\Phi(({}^DK_{s_j,t_j}: j\geq 1)), \ K_{s,t}=\mathfrak{p}(\tilde{K}_{s,t}).
$$ 
\end{dfn}


In the subsequent  sections we verify that the family $(K_{s,t}:-\infty<s\leq t<\infty)$ satisfies all conditions stated in the Theorem \ref{thm:SFK}.

\subsubsection{Consistency} We check that Definitions \ref{def:SFK} and \ref{def:1.6.1} are consistent. Let $(s,t)\in D^2,$ $s\leq t.$ Then $s_{j}=s,$ $t_j=t$ for all large enough $j.$  The needed statement follows from equalities
$$
\tilde{K}_{s,t}(x)=\hat{\ell} \left(\left(\mathfrak{p}({}^DK_{s_j,t_j})(x):j\geq 1\right)\right)=\mathfrak{p}({}^DK_{s,t})(x)={}^DK_{s,t}(x).
$$
(see statement (1) of Proposition \ref{prp:properties_of_K}). In what follows we identify ${}^DK_{s,t}$ with $K_{s,t}$ for  $(s,t)\in D^2,$ $s\leq t.$

\subsubsection{Case when $s=t$} If $s=t,$ then $s_j=t_j$ for all $j\geq 1.$  Since the kernel $x\mapsto \delta_x$ is invariant under $\mathfrak{p}$ (Lemma \ref{lem:surjectivity}) and $K_{s,s}(x)=\delta_x$ (Definition \ref{def:1.6.1}), we deduce that 
$$
\tilde{K}_{s,s}(x)=\hat{\ell} \left(\left(\mathfrak{p}(K_{s_j,s_j})(x):j\geq 1\right)\right)=\delta_{x}.
$$
It follows that $K_{s,s}(x)=\delta_x$ without exceptions.

\subsubsection{Invariance under $\mathfrak{p}$} Property $\mathfrak{p}\circ \mathfrak{p}=\mathfrak{p}$ and Definition \ref{def:SFK} immediately impliy that $\mathfrak{p}(K_{s,t})=K_{s,t}.$

\subsubsection{Measurability} Mappings $(s,t,\omega)\mapsto K_{s_j,t_j}(\omega)$ are measurable. From measurability of  $\Phi$ we immediately obtain the measurability of $(s,t,\omega)\mapsto K_{s,t}(\omega)$.


\subsubsection{Convergence of approximations and distribution of $K_{s,t}$}   \begin{prp}
\label{prp:convergence} \begin{enumerate} 
\item For all real $s\leq t$ and all $\varkappa\in \mathcal{P}(M),$
$$
\lim_{j\to\infty}\varkappa K_{s_j,t_j}=\varkappa K_{s,t} \  \mbox{ a.s.}
$$

\item The law of $K_{s,t}$ coincides with with $\nu_{t-s}.$

\end{enumerate}
\end{prp}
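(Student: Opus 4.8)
The plan is to deduce the almost-sure convergence in (1) from a Borel--Cantelli argument based on Proposition \ref{prp:approximation}, to identify the limit from the definitions of $\tilde K_{s,t}$ and $K_{s,t}$ together with Fubini's theorem, and to read off the law in (2) from the convergence $\Pi^{(n)}_{t_j-s_j}(x)\to\Pi^{(n)}_{t-s}(x)$ supplied by Lemma \ref{lem:properties_of_Pi_1}. Fix real $s\le t$. For $j$ large enough one has $s_j,t_j,s_{j+1},t_{j+1}\in D\cap[-j,j]$ and $0\le s_{j+1}-s_j\le s-s_j<2^{-n_j}$, $0\le t_{j+1}-t_j<2^{-n_j}$; hence for every $\varkappa$ supported in a fixed $L_{j_0}$ and all large $j$ the choice of $(n_j)$ gives $\mathsf P\{\hat d(\varkappa\,{}^DK_{s_j,t_j},\varkappa\,{}^DK_{s_{j+1},t_{j+1}})\ge 2^{-j}\}\le 2^{-j}$, so by Borel--Cantelli $(\varkappa\,{}^DK_{s_j,t_j})_{j}$ is a.s.\ Cauchy in the complete space $(\mathcal P(\hat M),\hat d)$ and converges a.s.\ to some $L_\varkappa$. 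Taking $\varkappa=\delta_x$ produces, for each $x$, an a.s.\ limit $L_x=\lim_j{}^DK_{s_j,t_j}(x)$, and we fix the jointly measurable version $L_x(\omega)=\hat\ell\bigl(({}^DK_{s_j,t_j}(\omega,x))_j\bigr)$.

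Next I would prove (2). Since ${}^DK_{s_j,t_j}\sim\nu_{t_j-s_j}$ by Proposition \ref{prp:properties_of_K}(4), the vector $({}^DK_{s_j,t_j}(x_1),\dots,{}^DK_{s_j,t_j}(x_n))$ has law $\Pi^{(n)}_{t_j-s_j}(x_1,\dots,x_n)$, the $n$-point marginal of $\nu_{t_j-s_j}$ (cf.\ \eqref{eq:fdd_verification} and Lemma \ref{lem:distribution_on_P(M)^n}). As $t_j-s_j\to t-s$, Lemma \ref{lem:properties_of_Pi_1}(4) yields $\Pi^{(n)}_{t_j-s_j}(x)\to\Pi^{(n)}_{t-s}(x)$ weakly, so together with the a.s.\ convergence just established the law of $(L_{x_1},\dots,L_{x_n})$ equals $\Pi^{(n)}_{t-s}(x)$, which is concentrated on $\mathcal P(M)^n$ by Lemma \ref{lem:properties_of_Pi_1}(3); in particular $L_x\in\mathcal P(M)$ a.s. Because ${}^DK_{s_j,t_j}=\mathfrak p({}^DK_{s_j,t_j})$ (Proposition \ref{prp:properties_of_K}(1)) and $\hat\ell$ returns the limit of a convergent sequence, $\tilde K_{s,t}(x)=\Phi(({}^DK_{s_j,t_j})_j)(x)=\hat\ell\bigl(({}^DK_{s_j,t_j}(x))_j\bigr)=L_x$ a.s. Hence the finite-dimensional distributions of $\tilde K_{s,t}$ coincide with those of $\nu_{t-s}$, so $\tilde K_{s,t}\sim\nu_{t-s}$; and since $\mathfrak p$ is a measurable presentation of $\nu_{t-s}$, one has $\mathfrak p(\tilde K_{s,t})(x)=\tilde K_{s,t}(x)$ a.s.\ for each $x$, whence $K_{s,t}=\mathfrak p(\tilde K_{s,t})$ again has law $\nu_{t-s}$.

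Finally I would identify $L_\varkappa$ and prove (1). For compactly supported $\varkappa$ and $g\in C(\hat M)$, writing $\int g\,d(\varkappa\,{}^DK_{s_j,t_j})=\int_M\bigl(\int g\,d\,{}^DK_{s_j,t_j}(x)\bigr)\varkappa(dx)$ and using ${}^DK_{s_j,t_j}(x)\to L_x$ weakly a.s.\ with $L_x\in\mathcal P(M)$ a.s., Fubini's theorem and bounded convergence in $x$ give $\int g\,d(\varkappa\,{}^DK_{s_j,t_j})\to\int_M\bigl(\int g\,dL_x\bigr)\varkappa(dx)$ a.s.; comparing with $\varkappa\,{}^DK_{s_j,t_j}\to L_\varkappa$ gives $L_\varkappa=\int_M L_x\,\varkappa(dx)$ a.s. Since $\tilde K_{s,t}(x)=L_x$ a.s.\ for each $x$, Fubini gives $\int_M L_x\,\varkappa(dx)=\varkappa\tilde K_{s,t}$ a.s., and since $\tilde K_{s,t}\sim\nu_{t-s}$ and $\mathfrak p$ is a measurable presentation of $\nu_{t-s}$ we get $\varkappa\tilde K_{s,t}=\varkappa\,\mathfrak p(\tilde K_{s,t})=\varkappa K_{s,t}$ a.s. Thus $\varkappa\,{}^DK_{s_j,t_j}\to\varkappa K_{s,t}$ a.s.\ for compactly supported $\varkappa$; for a general $\varkappa$ one decomposes $\varkappa=\sum_k\varkappa(L_k\setminus L_{k-1})\,\varkappa_k$ with each $\varkappa_k$ supported in $L_k$, applies the previous case to each $\varkappa_k$, and controls the tail $\sum_{k>K_0}\varkappa(L_k\setminus L_{k-1})$ to conclude $\varkappa\,{}^DK_{s_j,t_j}\to\varkappa K_{s,t}$ a.s.

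The routine part is the Borel--Cantelli step. The main obstacles are, first, ruling out escape of mass to infinity for the limit --- which becomes easy once one observes that the laws $\Pi^{(n)}_{t_j-s_j}(x)$ converge weakly to $\Pi^{(n)}_{t-s}(x)$ and the latter sits on $\mathcal P(M)^n$ (Lemma \ref{lem:properties_of_Pi_1}) --- and, second, the bookkeeping of the logical order: the identification of $L_\varkappa$ with $\varkappa K_{s,t}$ uses that $\tilde K_{s,t}\sim\nu_{t-s}$ in order to pass from $\tilde K_{s,t}$ to $K_{s,t}=\mathfrak p(\tilde K_{s,t})$ through the measurable presentation, so statement (2) has to be established before statement (1) is completed.
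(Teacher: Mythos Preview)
Your proposal is correct and follows essentially the same route as the paper: Borel--Cantelli from the defining estimate on $(n_j)$ gives a.s.\ convergence of $K_{s_j,t_j}(x)$ in $\mathcal P(\hat M)$, the limiting law $\Pi^{(n)}_{t-s}(x)$ forces the limit into $\mathcal P(M)$, this identifies the law of $\tilde K_{s,t}$ and hence of $K_{s,t}$ as $\nu_{t-s}$, and Fubini transfers the pointwise convergence to $\varkappa K_{s_j,t_j}\to\varkappa K_{s,t}$. The only difference is that your separate Borel--Cantelli for compactly supported $\varkappa$ and the subsequent decomposition $\varkappa=\sum_k\varkappa(L_k\setminus L_{k-1})\varkappa_k$ are unnecessary: once the pointwise result $K_{s_j,t_j}(x)\to K_{s,t}(x)$ a.s.\ is in hand, Fubini gives directly that a.s.\ this holds for $\varkappa$-a.a.\ $x$, and bounded convergence against any $g\in C(\hat M)$ yields $\varkappa K_{s_j,t_j}\to\varkappa K_{s,t}$ for arbitrary $\varkappa\in\mathcal P(M)$ in one stroke---this is what the paper's terse ``follows from Fubini's theorem'' is doing.
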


\begin{proof} We start by showing  that for each $x\in M$ a.s. the limit $\lim_{j\to\infty}K_{s_j,t_j}(x)$  exists in $\mathcal{P}(M)$. Fix $T>0,$ such that $-T\leq s\leq t\leq T.$ If $[-T-1,T+1]\subset [-j,j]$ and $x\in L_j,$ then under conditions $(s,t,u,v)\in (D\cap [-j,j])^4,$ $s\leq t,$ $u\leq v,$ $|s-u|\leq 2^{-n_j},$ $|t-v|\leq 2^{-n_j},$ we have 
$$
\mathsf{P}\{\hat{d}(K_{s,t}(x),K_{u,v}(x))\geq 2^{-j}\}\leq 2^{-j}.
$$
We note that for all large enough $j,$ $(s_j,t_j,s_{j+1},t_{j+1})\in (D\cap [-j,j])^4.$ Further $0\leq s_{j+1}-s_j<2^{-n_j},$ $0\leq t_{j+1}-t_j<2^{-n_j}.$ Hence,
$$
\mathsf{P}\{\hat{d}(K_{s_j,t_j}(x),K_{s_{j+1},t_{j+1}}(x))\geq 2^{-j}\}\leq 2^{-j}.
$$
It follows that with probability 1 for all large enough $j,$ $\hat{d}(K_{s_j,t_j}(x),K_{s_{j+1},t_{j+1}}(x))<2^{-j}.$ In particular, the limit $\lim_{j\to\infty}K_{s_j,t_j}(x)$ a.s. exists in $\mathcal{P}(\hat{M}).$ The law of $\lim_{j\to\infty}K_{s_j,t_j}(x)$ is $\Pi^{(1)}_{t-s}(x)\in \mathcal{P}(\mathcal{P}(M)).$ So, a.s. $\lim_{j\to\infty}K_{s_j,t_j}(x)$ is concentrated on $\mathcal{P}(M).$ 

The proved convergence implies that  the distribution of $\tilde{K}_{s,t}$ coincides with $\nu_{t-s}.$ Since $\mathfrak{p}$ is a measurable presentation of $\nu_{t-s},$  $\tilde{K}_{s,t}(x)=K_{s,t}(x)$ a.s. and the distribution of $K_{s,t}$ coincides with $\nu_{t-s}.$ The first statement of the Proposition follows from Fubini's theorem.

\end{proof}

%
%

\subsubsection{Idependent increments}  Let $t^{(1)}\leq t^{(2)}\leq \ldots \leq t^{(m)}.$ Then for each $j\geq 1$ random kernels $K_{t^{(1)}_j,t^{(2)}_j},\ldots, K_{t^{(m-1)}_j,t^{(m)}_j}$ are independent (Proposition \ref{prp:properties_of_K}). Distribution of $(K_{t^{(1)},t^{(2)}},\ldots, K_{t^{(m-1)},t^{(m)}})$ in $(E^{m-1},\mathcal{E}^{\otimes (m-1)})$ is completely determined by distributions of $(K_{t^{(k)},t^{(k+1)}}(x_r): 1\leq k\leq m-1, 1\leq r\leq l),$  where $x\in M^l,$ $l\geq 1.$ Proposition \ref{prp:convergence} implies that random kernels $K_{t^{(1)},t^{(2)}},\ldots, K_{t^{(m-1)},t^{(m)}}$ are independent as well.

\subsubsection{Evolutionary property}  

\begin{prp}
\label{prp:evolutionary_property} For all real $r\leq s\leq t$ and $x\in M,$
$$
K_{r,s}K_{s,t}(x)=K_{r,t}(x) \ \mbox{ a.s.}
$$
\end{prp}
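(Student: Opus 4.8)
The plan is to derive the statement for real $r<s<t$ (the degenerate cases $r=s$ or $s=t$ are immediate, since $K_{u,u}(y)=\delta_y$ and $\delta_y K=K(y)$) from the evolutionary property at dyadic times by approximation. Fix $x\in M$ and write $r_j,s_j,t_j$ for the level-$n_j$ approximations from below. Since $(r_j,s_j,t_j)\in D^3$ with $r_j\le s_j\le t_j$ and the constant measure $\delta_x$ is independent of $\mathcal A_{r_j,t_j}$, Remark~\ref{rem:1.6.5} gives ${}^DK_{r_j,s_j}(x)\,{}^DK_{s_j,t_j}={}^DK_{r_j,t_j}(x)$ a.s.\ for every $j$. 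By Proposition~\ref{prp:convergence}(1) applied with $\varkappa=\delta_x$ to the pair $(r,t)$, the right-hand side converges a.s.\ to $K_{r,t}(x)$, so
$$\lim_{j\to\infty}{}^DK_{r_j,s_j}(x)\,{}^DK_{s_j,t_j}=K_{r,t}(x)\quad\text{a.s.}$$
Since a.s.\ and in-probability limits agree, it remains to prove that the same sequence converges \emph{in probability} to $K_{r,s}(x)\,K_{s,t}=K_{r,s}K_{s,t}(x)$.

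Put $\mathcal M_j={}^DK_{r_j,s_j}(x)$ and $\mathcal M=K_{r,s}(x)$; by Proposition~\ref{prp:convergence}(1), $\mathcal M_j\to\mathcal M$ a.s. I would bound
$$\hat d\bigl(\mathcal M_j\,{}^DK_{s_j,t_j},\mathcal M K_{s,t}\bigr)\le \hat d\bigl(\mathcal M_j\,{}^DK_{s_j,t_j},\mathcal M_j K_{s,t}\bigr)+\hat d\bigl(\mathcal M_j K_{s,t},\mathcal M K_{s,t}\bigr).$$
The structural point is that $\mathcal M_j$ is an $\mathcal A_{r_j,s_j}$-measurable quantity, hence depends only on the coordinates $\omega^n_u$ at dyadic times $u2^{-n}<s$, whereas ${}^DK_{s_j,t_j}$ uses only times $\ge s_j$ and $K_{s,t}$, being a tail functional of $({}^DK_{s_k,t_k})_k$, depends (up to null sets) only on coordinates at times $\ge s$; by the argument of Lemma~\ref{lem:sigma_fields_A_st} together with a Lévy $0$--$1$ argument for the tail, $\mathcal M_j$ is independent of the pair $({}^DK_{s_j,t_j},K_{s,t})$ and the whole sequence $(\mathcal M_k)_k$ is independent of $K_{s,t}$. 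Conditioning on $\mathcal M_j$ (respectively on the sequence $(\mathcal M_k)_k$) therefore replaces the random initial measure by a deterministic $\theta$, and the two terms reduce to: (i) $\hat d(\theta\,{}^DK_{s_j,t_j},\theta K_{s,t})\to0$ in probability as $j\to\infty$, and (ii) $\hat d(\theta_j K_{s,t},\theta_\infty K_{s,t})\to0$ in probability whenever deterministic $\theta_j\to\theta_\infty$.

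Both deterministic statements come from Propositions~\ref{prp:approximation} and~\ref{prp:convergence}. Proposition~\ref{prp:convergence}(1) gives $\theta\,{}^DK_{s_j,t_j}\to\theta K_{s,t}$ a.s.\ for each fixed $\theta$; telescoping over $m\ge j$ and applying Proposition~\ref{prp:approximation} with $\mu=\nu=\theta$ and the nearby dyadic pairs $(s_m,t_m),(s_{m+1},t_{m+1})$ upgrades this to a bound that is uniform over $\theta$ in any compact subset of $\mathcal P(M)$ (the escape of mass off the sets $L_j$ is controlled by tightness and by $\|\cdot\|_{\mathrm{TV}}$-contraction of $\theta\mapsto\theta K$). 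Similarly, passing Proposition~\ref{prp:approximation} to the real-time limit via Proposition~\ref{prp:convergence} shows that $\theta\mapsto\theta K_{s,t}$ is continuous in probability, locally uniformly. To remove the conditioning one integrates these uniform bounds against the law of $\mathcal M_j$, which is $\Pi^{(1)}_{s_j-r_j}(x)$; continuity of the curve $u\mapsto\Pi^{(1)}_u(x)$ (Lemma~\ref{lem:properties_of_Pi_1}(4)) makes the family $\{\Pi^{(1)}_{s_j-r_j}(x):j\ge1\}$ tight, which absorbs the contribution of the complement of the compact set. Hence both terms tend to $0$ in probability, ${}^DK_{r_j,s_j}(x)\,{}^DK_{s_j,t_j}\to K_{r,s}K_{s,t}(x)$ in probability, and comparing with the a.s.\ limit above yields $K_{r,s}K_{s,t}(x)=K_{r,t}(x)$ a.s.

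The main obstacle is precisely this last passage: the map $(\theta,K)\mapsto\theta\mathfrak p(K)$, though measurable, is not continuous, so one cannot simply take limits inside ${}^DK_{r_j,s_j}(x)\,{}^DK_{s_j,t_j}$. Propositions~\ref{prp:approximation}--\ref{prp:convergence} are exactly the substitute for continuity, but to use them one must first do the independence bookkeeping above — separating the coordinates carried by $\mathcal M_j$ (times $<s$) from those carried by $K_{s,t}$ (times $\ge s$) — so that the conditioning turns the random initial measures into deterministic ones to which those propositions apply.
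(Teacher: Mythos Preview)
Your outline is correct and follows essentially the same route as the paper: reduce to showing $\mathcal{M}_j\,{}^DK_{s_j,t_j}\to \mathcal{M}K_{s,t}$ in probability via the triangle split $\hat d(\mathcal{M}_j{}^DK_{s_j,t_j},\mathcal{M}_jK_{s,t})+\hat d(\mathcal{M}_jK_{s,t},\mathcal{M}K_{s,t})$, use independence of $(\mathcal{M}_j)_j$ from $K_{s,t}$ to replace the random initial measure by a deterministic one ranging over a tight compact $\mathcal{C}\subset\mathcal{P}(M)$, and control both terms uniformly over $\mathcal{C}$ with Propositions~\ref{prp:approximation} and~\ref{prp:convergence}. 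The only cosmetic differences are that the paper extracts a subsequence $(j_l)$ so the telescoped bounds become summable, and that the independence of $(\mathcal{M}_j)_j$ from $K_{s,t}$ does not need a L\'evy $0$--$1$ law: since each ${}^DK_{s_k,t_k}(z_m)$ converges a.s., $K_{s,t}$ is a.s.\ equal to a function of $({}^DK_{s_k,t_k})_{k\ge J}$, which is $\mathcal{A}_{s_J,\lceil t\rceil}$-measurable and hence independent of $\mathcal{A}_{r_1,s_J}\supset\sigma(\mathcal{M}_1,\dots,\mathcal{M}_J)$.
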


\begin{proof}

Case when  $r=s$ or $s=t$ is trivial. Assume that $r<s<t.$ Let $\mathcal{M}_j=K_{r_j,s_j}(x),$ $\mathcal{M}=K_{r,s}(x).$ Sequence $(\mathcal{M}_j:j\geq 1)$ is  independent from $K_{s,t}$ and converges a.s. to $\mathcal{M}$ (Proposition \ref{prp:convergence}).

Choose $T>0$ such that $[r,t]\subset [-T+1,T].$ Given $\alpha>0$ find compact $\mathcal{C}\subset \mathcal{P}(M),$ such that $\mathsf{P}\{\mathcal{M}\in \mathcal{C}\}>1-\alpha$  and $\mathsf{P}\{\mathcal{M}_j\in \mathcal{C}\}>1-\alpha$ for all $j.$ By Proposition \ref{prp:approximation}  there is a strictly increasing sequence of positive integers $(j_l:l\geq 1),$ such that for all $((w,z),(u,v))\in (D\cap [-T,T])^4$ with $w\leq z,$ $u\leq v,$ $|w-u|\leq 2^{-n_{j_l}},$ $|z-v|\leq 2^{-n_{j_l}},$ and all $\varkappa\in \mathcal{C},$ 
$$
\mathsf{P}\{\hat{d}(\varkappa K_{w,z},\varkappa K_{u,v})\geq 2^{-l}\}\leq 2^{-l}.
$$
It follows that for $\varkappa\in \mathcal{C},$ 
$$
\mathsf{P}\{\hat{d}(\varkappa K_{s_{j_l},t_{j_l}},\varkappa K_{s_{j_{l+1}},t_{j_{l+1}}})\geq 2^{-l}\}\leq 2^{-l}.
$$
Proposition \ref{prp:convergence} implies 
$$
\begin{aligned}
\mathsf{P}&\{\hat{d}(\varkappa K_{s_{j_l},t_{j_l}},\varkappa K_{s,t})> 2^{-l+1}\}\leq \liminf_{L\to \infty}\mathsf{P}\{\hat{d}(\varkappa K_{s_{j_l},t_{j_l}},\varkappa K_{s_{j_L},t_{j_L}})\geq 2^{-l+1}\}\\
&\leq \liminf_{L\to\infty}\mathsf{P}\left\{\hat{d}(\varkappa K_{s_{j_l},t_{j_l}},\varkappa K_{s_{j_L},t_{j_L}})\geq \sum^{L-1}_{m=l}2^{-m}\right\}\\
&\leq \liminf_{L\to\infty}\sum^{L-1}_{m=l}\mathsf{P}\left\{\hat{d}(\varkappa K_{s_{j_m},t_{j_m}},\varkappa K_{s_{j_{m+1}},t_{j_{m+1}}})\geq 2^{-m}\right\}\leq\lim_{L\to\infty}\sum^{L-1}_{m=l}2^{-m}=2^{-l+1}.
\end{aligned}
$$
When $2^{-l+1}\leq \alpha,$ we have 
\begin{equation}
\label{eq:estimate1}
\begin{aligned}
\mathsf{P}&\{\hat{d}(\mathcal{M}_{j_l}K_{s_{j_l},t_{j_l}},\mathcal{M}K_{s,t})> 2\alpha\}\\
&\leq \mathsf{P}\{\hat{d}(\mathcal{M}_{j_l}K_{s_{j_l},t_{j_l}},\mathcal{M}_{j_l}K_{s,t})> \alpha\}+ \mathsf{P}\{\hat{d}(\mathcal{M}_{j_l}K_{s,t},\mathcal{M}K_{s,t})> \alpha\}\\
&\leq \alpha+\sup_{\varkappa\in \mathcal{C}} \mathsf{P}\{\hat{d}(\varkappa K_{s_{j_l},t_{j_l}},\varkappa K_{s,t})> 2^{-l+1}\}+ \mathsf{P}\{\hat{d}(\mathcal{M}_{j_l}K_{s,t},\mathcal{M}K_{s,t})> \alpha\}\\
&\leq \alpha+2^{-l+1}+\mathsf{P}\{\hat{d}(\mathcal{M}_{j_l}K_{s,t},\mathcal{M}K_{s,t})> \alpha\}\leq 2\alpha + \mathsf{P}\{\hat{d}(\mathcal{M}_{j_l}K_{s,t},\mathcal{M}K_{s,t})> \alpha\}.
\end{aligned}
\end{equation}
By Proposition \ref{prp:approximation}  there exists $\delta>0,$  such that for all $(u,v))\in (D\cap [-T,T])^2$ with $u\leq v,$ and all $(\varkappa_1,\varkappa_2)\in \mathcal{C}^2$ with $d(\varkappa_1,\varkappa_2)\leq \delta,$
$$
\mathsf{P}\{\hat{d}(\varkappa_1 K_{u,v},\varkappa_2 K_{u,v})\geq \alpha\}\leq \alpha.
$$
From Proposition \ref{prp:convergence} it follows that for all $(\varkappa_1,\varkappa_2)\in \mathcal{C}^2$ with $d(\varkappa_1,\varkappa_2)\leq \delta,$
$$
\mathsf{P}\{\hat{d}(\varkappa_1 K_{s,t},\varkappa_2 K_{s,t})> \alpha\}\leq \alpha.
$$
We estimate 
\begin{equation}
\label{eq:estimate2}
\begin{aligned}
\mathsf{P}&\{\hat{d}(\mathcal{M}_{j_l}K_{s,t},\mathcal{M}K_{s,t})> \alpha\}\\
&\leq 2\alpha+\mathsf{P}\{d(\mathcal{M}_{j_l},\mathcal{M})>\delta\}+\sup_{\substack{(\varkappa_1,\varkappa_2)\in \mathcal{C}^2 \\ d(\varkappa_1,\varkappa_2)\leq \delta}}\mathsf{P}\{\hat{d}(\varkappa_1 K_{s,t},\varkappa_2 K_{s,t})> \alpha\}\\
&\leq 3\alpha+\mathsf{P}\{d(\mathcal{M}_{j_l},\mathcal{M})>\delta\}.
\end{aligned}
\end{equation}
Substituting  \eqref{eq:estimate2} into \eqref{eq:estimate1}, we get 
$$
\mathsf{P}\{\hat{d}(\mathcal{M}_{j_l}K_{s_{j_l},t_{j_l}},\mathcal{M}K_{s,t})> 2\alpha\}\leq 5\alpha+\mathsf{P}\{d(\mathcal{M}_{j_l},\mathcal{M})>\delta\},
$$
when $2^{-l+1}\leq \alpha.$ Since $\lim_{l\to\infty}\mathsf{P}\{d(\mathcal{M}_{j_l},\mathcal{M})>\delta\}=0,$ we deduce that 
$$
\mathcal{M}_{j_l}K_{s_{j_l},t_{j_l}}\to \mathcal{M}K_{s,t}, \ l\to\infty,
$$
in probability. On the other hand, a.s.
$$
\mathcal{M}_{j_l}K_{s_{j_l},t_{j_l}}=K_{r_{j_l},t_{j_l}}(x) \to K_{r,t}(x), \ j\to\infty
$$
(Remark \ref{rem:1.6.5}). It follows that $K_{r,t}(x)=\mathcal{M}K_{s,t}=K_{r,s}K_{s,t}(x)$ a.s. This finishes the proof of Proposition \ref{prp:evolutionary_property} and of  Theorem \ref{thm:SFK} as well.

\end{proof}

\end{document}